\definecolor{DarkRed}{rgb}{0.55,.00,0.2}
\definecolor{DarkBlue}{rgb}{0.25,.00,0.75}
\definecolor{DarkGrey}{rgb}{0.15,.15,0.15}
\definecolor{LightGrey}{rgb}{0.55,.55,0.55}
\numberwithin{equation}{section}
\newtheorem{theorem}{Theorem}[section]
\newtheorem{proposition}[theorem]{Proposition}
\newtheorem{lemma}[theorem]{Lemma}
\newtheorem{corollary}[theorem]{Corollary}
\newtheorem{Remark}[theorem]{Remark}
\newenvironment{remark}{\begin{Remark} \sf}{\end{Remark}}
\newtheorem{example}{Example}
\newcommand{\ds}{\displaystyle}
\newcommand{\e}{{\rm e}}
\newcommand{\mbf}[1]{\mathbf{#1}}
\begin{document}

\title{\bf The Kontorovich-Lebedev transform as a map between $d$-orthogonal polynomials}

\maketitle
\begin{center}
\begin{tabular}{c@{\hspace*{2cm}}c}
{\bf Ana F. Loureiro} & {\bf S. Yakubovich}\\ 
anafsl@fc.up.pt  & syakubov@fc.up.pt \\
Centro de Matem\'atica  & Department of Mathematics, \\ 
Fac. Sciences of University of Porto & Fac. Sciences of University of Porto,\\
Rua do Campo Alegre,  687 & Rua do Campo Alegre,  687\\ 
4169-007 Porto (Portugal)&4169-007 Porto (Portugal)
\end{tabular}
\end{center}

\begin{abstract} A slight modification of the Kontorovich-Lebedev transform is an automorphism on the vector space of polynomials. The action of this $KL_{\alpha}$-transform over certain polynomial sequences will be under discussion, and a special attention will be given the  d-orthogonal ones. For instance, the Continuous Dual Hahn polynomials appear as the $KL_{\alpha}$-transform of a $2$-orthogonal sequence of Laguerre type. Finally, all the orthogonal polynomial sequences whose $KL_{\alpha}$-transform is a $d$-orthogonal sequence will be characterized: they are essencially semiclassical polynomials fulfilling particular conditions and $d$ is even. The Hermite and Laguerre polynomials are the classical solutions to this problem. 
\end{abstract}

{\bf Keywords: } Index transforms, Kontorovich-Lebedev transform, $d$-orthogonal polynomials, semiclassical polynomials, Hermite polynomials, Laguerre polynomials, continuous dual Hahn polynomials.



\section{Introduction }

Throughout  the text, $\mathbb{N}$ will denote the set of all positive integers, $\mathbb{N}_{0}=\mathbb{N}\cup \{0\}$, whereas $\mathbb{R}$ and $\mathbb{C}$  the field of the real and complex numbers, respectively. The notation $\mathbb{R}_{+}$   corresponds to the set of all positive real numbers. The present investigation is primarily targeted at analysis of sequences of polynomials whose degrees equal its order, which will be shortly called as PS. Whenever the leading coefficient of each of its polynomials equals $1$, the PS is said to be a MPS ({\it monic polynomial sequence}). A PS or a MPS forms a basis of the vector space of polynomials with coefficients in $\mathbb{C}$, here denoted as $\mathcal{P}$. The conventions $\prod_{\sigma=0}^{-1}:=1$, $\sum_{\nu=k}^{m}=0$, for any integers $k,m$ such that $m\leqslant k-1$, are assumed for the whole of the text. Further notations are introduced as needed. 

A very general function included most of the known special functions as particular cases is the Meijer G-function 
$$
	\mathop{G_{p,q}^{m,n}}\left(z;{\mbf{a} \atop \mbf{b}}\right)
	:= \mathop{G_{p,q}^{m,n}}\left(z;{a_{1},\ldots,a_{p} \atop b_{1},\ldots,b_{q}}\right)
$$
defined via the Mellin-Barnes integral (the reciprocal formula of the Mellin transform) 
\begin{equation}\label{Meijer G}
	\mathop{G_{p,q}^{m,n}}\left(z;{\mbf{a} \atop \mbf{b}}\right)
	:=\mathop{G_{p,q}^{m,n}}\left(z;{a_{1},\ldots,a_{p} \atop b_{1},\ldots,b_{q}}\right)
	=\frac{1}{2\pi i}\int _{{L}}
		{\frac{\prod\limits _{{l =1}}^{m}\mathop{\Gamma\/}\nolimits\!\left(b_{{l }}-s\right)
			\prod\limits _{{l =1}}^{n}\mathop{\Gamma\/}\nolimits\!\left(1-a_{{l }}+s\right)}
		{\prod\limits _{{l =m}}^{{q-1}}\mathop{\Gamma\/}\nolimits\!\left(1-b_{{l +1}}+s\right)
		\prod\limits _{{l =n}}^{{p-1}}\mathop{\Gamma\/}\nolimits\!\left(a_{{l +1}}-s\right) }}z^{s}ds,
\end{equation}
as long as $0 \leqslant m \leqslant q$ and $0 \leqslant n \leqslant p$, where $m, n, p$ and $q$ are integer numbers, $a_{k}-b_{i} \neq 1, 2, 3,\ldots $ for $k = 1, 2, ..., n$ and $j = 1, 2, ..., m$, which implies that no pole of any $\Gamma(b_{j} - s), j = 1, 2, ..., m,$  coincides with any pole of any $\Gamma(1 - a_{k} + s), k = 1, 2, ..., n$ and $z \neq 0$. 

Such function includes as special cases all the generalized hypergeometric functions ${}_{p}F_{q}$, Mathieu functions, among others. 

In 1964, Wimp formally introduced the general index transform over parameters of the Meijer $G$-function 
\begin{equation}
	F(\tau) = \int_{0}^{\infty} 
	\mathop{G_{p+2,q}^{m,n+2}}\left({x; {1-\mu+i\tau,1-\mu-i\tau, (a_{p}) \atop (b_{q})}}\right) f(x) dx \ , 
	\label{Wimp Direct}
\end{equation}
whose inversion formula was established in 1985 \cite{Yak1985} by the second author (see \cite{Springer,YakuLuchko,YakubovichBook1996})
\begin{equation}\label{Wimp Inverse}
	f(x) = \frac{1}{\pi^{2}} \int_{0}^{\infty} \tau \sinh(2\pi\tau) F(\tau) 
	\mathop{G_{p+2,q}^{q-m,p-n+2}}\left({x; {\mu+i\tau,\mu-i\tau, -(a_{p}^{n+1}) ,-(a_{n}) 
		\atop - ( b_{q}^{m+1}) , - (b_{m})}}\right) d\tau \ . 
\end{equation}

The Kontorovich-Lebedev (KL)-transform comes out as the very simple case of this general transform, arising upon the choice of parameters $m=n=p=q=0$. The reciprocal pair of transformations \eqref{Wimp Direct}-\eqref{Wimp Inverse} incorporates all the existent index transforms in the literature such as Mehler-Fock, Olevski-Fourier-Jacobi, Whittaker, Lebedev's transform with a combination of modified Bessel functions, among others. We notice that all these index transforms can be obtained through the composition of the KL- with the Mellin type convolution transforms. For further reading on the subject we refer to \cite{Springer,YakubovichBook1996,YakuLuchko}. 

The passage of a polynomial sequence to another performed by an integral transform was already considered, either by mean of the convolution type integral transform like the Fourier or the Mellin \cite{Atakishiyev,Koelink} or by the index integral transforms of the Fourier-Jacobi-Olevski \cite{Groenevelt Wilson,Koornwinder85,Koornwinder88}, among others (the reference list has no pretensions of completeness). Therein we may read the application of integral transforms to deal with precise examples of orthogonal sequences. The present work aims to give some basic notions about the action of the canonical index integral transforms while acting over the so-called $d$-orthogonal sequences, which are essentially sets of polynomials fulfilling a recurrence relation of order $d+1$. 

Our focus of attention will be on a slight modification of the KL-transform -- the $KL_{\alpha}$ -- which has the merit of linking certain families of $d$-orthogonal polynomials in a natural way. After reviewing in \S\ref{Sec: Preliminary} all the necessary properties for the sequel,  in  \S\ref{Sec: KL alpha} the key  properties of this operator will be given. Among them, the connection between functional relations of the $KL_{\alpha}$ image of certain differential relations satisfied by polynomial sequences. Thus, on \S\ref{Sec: Hypergeom polys} we provide enlightening examples of $d$-orthogonal polynomial sequences mapped by the $KL_{\alpha}$-operator into another $\widetilde{d}$-orthogonal polynomial sequences. It is the case, for instance, of those that are Appell sequences (see \S\ref{subsec: Appell}) or those whose reversed polynomial sequence possesses the Appell property (like the orthogonal Laguerre polynomials). Precisely, on \S\ref{subsect: Rev Appell} we will show that the Continuous Dual Hahn orthogonal polynomials are the $KL_{\alpha}$-image of the $2$-orthogonal sequence connected to the Bateman function and treated in \cite{BCheikhDouak2000}. Finally, on \S\ref{sec: KL MOPS to dMOPS} we seek all the orthogonal polynomial sequences that are mapped by the $KL_{\alpha}$-operator into $d$-orthogonal polynomial sequences. These are necessarily semiclassical polynomials of a certain type whose class ranges between $\max(d/2-2,0)$ and $d/2$, where $d$ must be an even integer - see Theorem \ref{Thm: classification}. Among the classical polynomials, only the Hermite and Laguerre polynomials share this property, while, among the semiclassical examples we quote the generalized Hermite polynomials.

\section{Preliminary results}\label{Sec: Preliminary}

In a previous work \cite{LouYak2012} we have shown that the modified  {\it KL-transform} defined by (see \cite{Lebedev,Sneddon} \cite[Th. 6.3]{YakuLuchko})
\begin{equation} \label{KL s directa}
	KL[f](\tau) =\frac{4 \sinh (\pi\tau/2)}{\pi\tau}
		\int_{0}^{\infty} K_{i\tau}(2\sqrt{x}) f(x) dx \ , 
\end{equation}
where $K_{\nu}(z)$ represents the modified Bessel function (also known as Macdonald function) \cite[Vol.II]{Bateman} is an automorphism essentially mapping monomials into central factorials.  Precisely, for positive real values of $\tau $ we have (see the relation (2.16.2.2) in \cite{PrudnikovMarichev}), 
\begin{align}
	& \label{KL s xn}
	\begin{array}{lcl}
	KL[x^n](\tau)& =& \ds  
	\frac { 4\sinh(\pi  {\tau}/2)}{\pi {\tau}} \int_{0}^\infty K_{i {\tau}}(2\sqrt{x}) x^n dx 
	= \prod_{\sigma=1}^n \left(\sigma^2 + \left(\frac{\tau}{2}\right)^2\right) \\
	&=& \ds \left(1- i \tau/2\right)_{n}\left(1+ i {\tau/2}\right)_{n}
	\ ,\  n\in\mathbb{N}_{0},
	\end{array}
\end{align}
where the $(x)_{n}$ represents the {\it Pochammer symbol}: $(x)_{n}:=\prod\limits_{\sigma=0}^{n-1}(x+\sigma)$ when $n\geqslant 1$ and $(x)_{0}=1$. As $\tau \to 0$, \eqref{KL s xn} becomes 
\begin{equation}\label{MomentsK0}
	KL[x^n](0)=   (n!)^{2}\ , \  n\in\mathbb{N}_{0}.
\end{equation}

The KL-transform \eqref{KL s directa} has a reciprocal inversion formula, namely  
\begin{equation}\label{KL s inverse}
	x\ f(x)= \frac{1}{2\pi} \lim_{\lambda\to \pi -} 
		\int_{0}^{\infty} \tau^{2} \cosh(\lambda\tau/2) K_{i\tau}(2\sqrt{x})
		\ KL[f](\tau)d\tau \ .
\end{equation}

The formula \eqref{KL s directa} is valid for any continuous function  $f \in L_{1}\left(\mathbb{R}_{+}, K_{0}(2\mu\sqrt{x}) dx \right)$, $0<\mu<1$, in a neighborhood of each $x\in\mathbb{R}_{+}$ where $f(x)$ has bounded variation.
The kernel of such transformation is the modified Bessel function (also called {\it MacDonald function}) $K_{2i{\tau}}(2\sqrt{x})$ of purely imaginary index, which is real valued and can be defined by integrals of Fourier type 
\begin{align}\label{Cosine Fourier K}
	& K_{i{\tau} }(2\sqrt{x}) = \int_{0}^\infty \e^{-2\sqrt{x} \cosh(u)}\cos({\tau} \, u)du \ , \ x\in\mathbb{R}_{+}, \ \tau\in\mathbb{R}_{+} .
\end{align}
and it is an eigenfunction of the operator 
\begin{equation}\label{op A }
	\mathcal{A} \ = \ x^{2} \frac{d^{2}}{dx^{2}} + x \frac{d}{dx} -x  \ 
	= \  x \frac{d}{dx} x \frac{d}{dx} -x
\end{equation} 
insofar as 
\begin{equation}\label{A Kitau}
	\mathcal{A} K_{ i {\tau} }(2\sqrt{x}) = -\left(\frac{\tau}{2}\right)^{2} \ K_{i{\tau}}(2\sqrt{x}) \ .
\end{equation}

Besides, $K_{\nu}(2\sqrt{x})$ reveals the asymptotic behaviour with respect to $x$ \cite[Vol. II]{Bateman}\cite{YakubovichBook1996}
\begin{align}
	& K_{\nu}(2\sqrt{x}) = \frac{\sqrt{\pi}}{2 \ x^{1/4}}  \e^{-2\sqrt{x}}[1+O(1/\sqrt{x})] , \quad x\rightarrow +\infty,
		\label{Knu at infty}\\
	& K_{\nu}(2\sqrt{x}) =O(x^{-{\rm Re}(\nu)/2}) \ , \ \text{if Re}(\nu)\neq0, \quad 
	\text{ and } \quad K_{0}(2\sqrt{x}) =O(\log x)  \ , \quad x\rightarrow 0. \label{K0 at 0}
\end{align}
and it is valid the  following inequality 
\begin{equation}\label{ineq Kitau} 
	\left| \frac{\partial^m K_{i\tau}(x)}{\partial x^m } \right| \leqslant   {\rm e}^{-\delta \tau} K_{m}(x\cos \delta) , \quad x>0, \ \tau >0, \ m \in\mathbb{N}_{0}
\end{equation} with $\delta \in (0,\pi/2)$. 

As orthogonal polynomials are at the center of discussion, we hereby recall some of foremost important properties to the understanding of the achieved results. 

The dual sequence $\{v_{n}\}_{n\geqslant 0}$ of a given MPS $\{Q_{n}\}_{n\geqslant 0}$ belong to the dual space $\mathcal{P}'$ of $\mathcal{P}$ and whose elements are uniquely defined by 
$$
	\langle v_{n},Q_{k}  \rangle := \delta_{n,k}, \; n,k\geqslant 0,
$$ 
where $\delta_{n,k}$ represents the {\it Kronecker delta} function. Its first element, $u_{0}$, earns the special name of  {\it canonical form} of the MPS. Here, by $\langle u,f\rangle$ we mean the action of $u\in\mathcal{P}'$ over $f\in\mathcal{P}$, but a special notation is given to the action over the elements  of the canonical sequence $\{x^{n}\}_{n\geqslant 0}$ -- the {\it moments of $u\in\mathcal{P}'$}:   $(u)_{n}:=\langle u,x^{n}\rangle, n\geqslant 0 $. 
Any element $u$ of $\mathcal{P}'$ can be written in a series of any dual sequence $\{ \mathbf{v}_{n}\}_{n\geqslant 0}$ of a MPS  $\{Q_{n}\}_{n\geqslant 0}$ \cite{MaroniTheorieAlg}: 
\begin{equation} \label{u in terms of un}
	u = \sum_{n\geqslant 0} \langle u , Q_{n} \rangle \;{v}_{n} \; .
\end{equation}
Differential equations or other kind of linear relations realized by the elements of the dual sequence can be deduced by transposition of those relations fulfilled by the elements of the corresponding MPS, insofar as a linear operator $T:\mathcal{P}\rightarrow\mathcal{P}$  
has a transpose $^{t}T:\mathcal{P}'\rightarrow\mathcal{P}'$ defined by 
\begin{equation}\label{Ttranspose}
	\langle{}^{t}T(u),f\rangle=\langle u,T(f)\rangle\,,\quad u\in\mathcal{P}',\: f\in\mathcal{P}.
\end{equation}
For example, for any form $u$ and any polynomial $g$, let $ Du=u'$ and $gu$ be the forms defined as usual by
$
	\langle u',f\rangle :=-\langle u , f' \rangle \ ,\  \langle gu,f\rangle :=\langle u, gf\rangle ,
$  
where $D$ is the differential operator \cite{MaroniTheorieAlg}. Thus, $D$ on forms is minus the transpose of the differential operator $D$ on polynomials.

Whenever there is a form $v\in\mathcal{P}'$ such that $\langle v , Q_{n} Q_{m} \rangle = k_{n} \delta_{n,m}$ with $k_{n}\neq0$ for all $n,m\in\mathbb{N}_{0}$ \cite{MaroniTheorieAlg,MaroniVariations} for some sequence $\{Q_{n}\}_{n\geqslant0}$, then $v$ is called a {\it regular} form. The PS $\{Q_{n}\}_{n\geqslant 0}$ is then said to be orthogonal with respect to $v$ and we can assume the system (of orthogonal polynomials) to be monic  and the original form $v$ is proportional to $v_{0}$. This unique MOPS $\{Q_{n}(x)\}_{n\geqslant 0}$ with respect to the regular form $v_{0}$ can be characterized by the popular second order recurrence relation 
\begin{align} \label{MOPS rec rel} 
&	\left\{ \begin{array}{@{}l}
		Q_{0}(x)=1 \quad ; \quad Q_{1}(x)= x-\beta_{0} \vspace{0.15cm}\\
		Q_{n+2}(x) = (x-\beta_{n+1})Q_{n+1}(x) - \gamma_{n+1} \, Q_{n}(x) \ , \quad n\in\mathbb{N}_{0},  
	\end{array} \right. 
\end{align}
where $\beta_{n}=\frac{\langle v_{0},x Q_{n}^2  \rangle}{\langle v_{0}, Q_{n}^2  \rangle}$ and $\gamma_{n+1}=\frac{\langle v_{0}, Q_{n+1}^2  \rangle}{\langle v_{0}, Q_{n}^2  \rangle}$ for all $n\in\mathbb{N}_{0}$. For further considerations regarding the theory of orthogonal polynomials we refer for instance to \cite{ChiharaBook,IsmailBook}. 

A form $u$ is called {\it semiclassical} \cite{maroniSemiclassiques88, MaroniVariations} when it is regular and there exist two polynomials $\Phi$ and $\Psi$, with $\Phi$ monic and $\deg\Psi\geqslant 1$, such that \begin{equation}\label{eq semiclassical}
	(\Phi u)' + \Psi u=0 .
\end{equation} 
The pair $(\Phi,\Psi)$ is not unique. The previous equation can be simplified if and only if there exists a root $c$ of $\Phi$ such that 
\begin{equation}\label{cond Simplification semiclassical}
	\left| \Phi'(c) + \Psi(c)\right| + \left| <u,\theta_{c}^2 (\Phi) + \theta_{c}(\Psi)> \right| = 0\ , 
\end{equation}
where $\theta_{c}(f)(x)=\frac{f(x)-f(c)}{x-c}$, for any $f\in\mathcal{P}$, and $u$ would then fulfill \ $\left( \theta_{c}(\Phi) u \right)' + \left( \theta_{c}^2 (\Phi) + \theta_{c}(\Psi) \right) u =0$. 

The minimum value of the integer $s(\Phi,\Psi)=\max\left(\deg(\Phi)-2,\deg(\Psi)-1\right)$ taken among all the possible pairs $(\Phi,\Psi)$ in \eqref{eq semiclassical} is called the {\it class of the semiclassical form $u$} and denoted by $s$. The pair $({\Phi}, {\Psi})$ giving the class $s\geqslant 0$ is unique \cite{MaroniTheorieAlg,MaroniVariations} and is such that 
$$
	\prod_{c\in\mathcal{Z}_{\Phi}} 
	\left| \Phi'(c) + \Psi(c)\right| + \left| <u,\theta_{c}^2 (\Phi) + \theta_{c}(\Psi)> \right|
	\neq 0
$$
where $\mathcal{Z}_{\Phi}$ represents the set of the roots of $\Phi$. (For instance, the class of a semiclassical form $u$ is achieved if each simple root of $\Phi$ is also a root of $\Psi$ or if each double root of $\Phi $ is not a root of $\Psi$). 

By extension, the corresponding MOPS $\{P_{n}\}_{n\geqslant 0}$ of the semiclassical form $u$ of class $s$ is also called semiclassical of class $s$ and it always fulfills the structural relation 
$$
	\Phi(x) P_{n+1}'(x) = \sum_{\nu=n-s}^{n+t} \theta_{n,\nu} P_{\nu}(x) \ , \ n\geqslant s \ , \ t=\deg\Phi,
$$
with $\theta_{n,n-s}\theta_{n,n+t}\neq0, \  n\geqslant s$. Moreover, the elements of $\{P_{n}\}_{n\geqslant 0}$ are solution of a second order differential equation with polynomial coefficients depending on $n$ but of fixed degree for all $n\geqslant 0$. The classical polynomials (Hermite, Laguerre, Bessel and Jacobi) correspond to the case where $s=0$. 

Any affine transformation leaves invariant the orthogonality of a sequence, and so does the semiclassical character  \cite{MaroniTheorieAlg,MaroniVariations}.  Precisely,  $\{P_{n}\}_{n\geqslant 0}$  is orthogonal with respect to $u_{0}$ if and only if $\{\widetilde{P}_{n}\}_{n\geqslant 0}$, defined by $\widetilde{P}_{n}(x)=a^{-n} P_{n}(ax+b)$ with $a\neq0$,  is a MOPS with respect to 
$\widetilde{u}_{0}=\left(h_{a^{-1}} \circ\tau_{-b} \right) u_{0}$ where $h_{a}f(x)=f(ax)$ and $\tau_{-b}f(x)=f(x+b)$. 

\begin{lemma}\label{lem: affine transform} \cite{MaroniTheorieAlg,MaroniVariations} If $\{P_{n}\}_{n\geqslant 0}$ is a semiclassical MOPS with respect to $u_{0}$ satisfying \eqref{eq semiclassical}, then $\{\widetilde{P}_{n}\}_{n\geqslant 0}$ is also semiclassical with respect to $\widetilde{u}_{0}$ and it fulfills 
$$
	D\left( a^{-\deg\Phi} \Phi(ax+b) \widetilde{u}_{0} \right) + a^{1-\deg\Phi} \Psi(ax+b)\widetilde{u}_{0}=0. 
$$
\end{lemma}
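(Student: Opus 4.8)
The plan is to transport the semiclassical functional equation $(\Phi u_0)'+\Psi u_0=0$ through the affine change of variable by applying to both of its sides the same operator $\mathcal{T}:=h_{a^{-1}}\circ\tau_{-b}$ on forms that produces $\widetilde u_0=\mathcal{T}u_0$. The whole argument rests on four commutation rules describing how $h_a$ and $\tau_{-b}$ acting on forms interact with the two elementary operations appearing in \eqref{eq semiclassical}: the derivative $D$ and the left multiplication by a polynomial.

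First I would record these rules, each obtained by unwinding $\langle\,\cdot\,,f\rangle$ through the dual definitions $\langle h_a u,f\rangle=\langle u,h_af\rangle$, $\langle\tau_{-b}u,f\rangle=\langle u,\tau_bf\rangle$ together with $\langle u',f\rangle=-\langle u,f'\rangle$ and $\langle gu,f\rangle=\langle u,gf\rangle$ from \S\ref{Sec: Preliminary}. On forms one gets $D\circ\tau_{-b}=\tau_{-b}\circ D$ and, crucially, $D\circ h_a=a^{-1}h_a\circ D$ (on forms the dilation twists the derivative by $a^{-1}$, not by $a$ as on polynomials), while the multiplication rules read $\tau_{-b}(gu)=(\tau_{-b}g)(\tau_{-b}u)$ and $h_{a^{-1}}(gu)=(h_{a}g)(h_{a^{-1}}u)$. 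Each of these is a one-line duality computation.

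Next I would push $\mathcal{T}$ through the equation term by term. For the derivative term, $\tau_{-b}\big((\Phi u_0)'\big)=\big((\tau_{-b}\Phi)(\tau_{-b}u_0)\big)'$, and then $h_{a^{-1}}$ produces a factor $a^{-1}$ in front of the derivative and conjugates the polynomial, giving $\mathcal{T}\big((\Phi u_0)'\big)=a^{-1}\big(\Phi^{*}\,\widetilde u_0\big)'$ with $\Phi^{*}(x)=\Phi(ax+b)$; likewise $\mathcal{T}(\Psi u_0)=\Psi^{*}\widetilde u_0$ with $\Psi^{*}(x)=\Psi(ax+b)$. Writing $t=\deg\Phi$ and introducing the monic $\widetilde\Phi=a^{-t}\Phi^{*}$ and $\widetilde\Psi=a^{1-t}\Psi^{*}$, the two contributions become $a^{t-1}(\widetilde\Phi\widetilde u_0)'$ and $a^{t-1}\widetilde\Psi\widetilde u_0$. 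Since $\mathcal{T}$ is linear and $\mathcal{T}(0)=0$, dividing by $a^{t-1}\neq0$ yields exactly $D\big(a^{-t}\Phi(ax+b)\widetilde u_0\big)+a^{1-t}\Psi(ax+b)\widetilde u_0=0$, which is the asserted identity. That $\widetilde u_0$ is again semiclassical then follows at once: it is regular because affine maps preserve regularity (recalled just before the statement), $\widetilde\Phi$ is monic, and $\deg\widetilde\Psi=\deg\Psi\geqslant1$.

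The one genuinely delicate point — and the only place an error could slip in — is the bookkeeping of the conventions on forms, in particular the appearance of $a^{-1}$ (rather than $a$) in $D\circ h_a$ and the conjugation of $\Phi,\Psi$ by $h_a\circ\tau_{-b}$. As a safeguard I would first verify the consistency of the chosen conventions directly on the orthogonality relation, namely that $\langle\widetilde u_0,\widetilde P_n\widetilde P_m\rangle=a^{-n-m}\langle u_0,P_nP_m\rangle$; this simultaneously confirms that $\widetilde u_0=\mathcal{T}u_0$ is indeed the form orthogonalizing $\{\widetilde P_n\}_{n\geqslant 0}$ and pins down every sign used above. With the commutation rules in hand, the remainder is routine algebra.
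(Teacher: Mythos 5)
Your proof is correct; note that the paper itself gives no proof of this lemma, simply citing Maroni's works, and your argument is precisely the standard transposition proof found there. The four commutation rules (in particular $D\circ h_{a}=a^{-1}h_{a}\circ D$ on forms), the identification $\Phi(ax+b)=(h_{a}\circ\tau_{-b})\Phi$, the final division by $a^{t-1}$, and the consistency check $\langle\widetilde u_{0},\widetilde P_{n}\widetilde P_{m}\rangle=a^{-n-m}\langle u_{0},P_{n}P_{m}\rangle$ all check out under the paper's conventions.
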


On the other hand, the concept of orthogonality of a sequence has been broadened to the so-called $d$-orthogonality (and even more generally to the multiple orthogonality, a concept that we leave aside). 
Any sequence $\{P_{n}\}_{n\geqslant 0}$ is said to be $d$-orthogonal with respect to the vector functional $\mathbf{U}=(u_{0},\ldots, u_{d-1})^T$, if and only if it fulfills the following conditions \cite{MaroniDortho,Iseghem}
\begin{align}\label{dOPS def vector U}
	\left\{\begin{array}{lcl}
	\left< u_{k} , x^{m}P_{n}\right>=0 &,& n\geqslant md+k+1\ , \ m\geqslant 0, \\
	\left< u_{k} , x^{m}P_{md+k}\right>\neq0 &,&  m\geqslant 0. 
	\end{array}\right.
\end{align}
In this case, the $d$-MOPS $\{P_{n}\}_{n\geqslant 0}$ necessarily satisfies the $(d+1)$-order recurrence relation 
\begin{equation}\label{dOPS rec rel}
	P_{n+1}(x) = (x-\beta_{n}) P_{n}(x) - \sum_{\nu=0}^{d-1} \gamma_{n-\nu}^{d-1-\nu} P_{n-1-\nu}(x)
	\quad , \quad n\geqslant d+1, 
\end{equation}
where $\gamma_{n+1}^0 \neq 0$ for all $n\geqslant 0$, with the initial conditions 
\begin{equation}\label{dOPS initial cond}
	\left\{\begin{array}{l}
	P_{0}(x)=1 \quad ; \quad P_{1}(x)= x-\beta_{0} \\
	P_{n+1}(x)=  (x-\beta_{n}) P_{n}(x) - \sum_{\nu=0}^{n-1} \gamma_{n-\nu}^{d-1-\nu} P_{n-\nu-1-\nu}(x)
		\quad , \quad 1\leqslant n \leqslant d. 
	\end{array}\right.
\end{equation}

\section{The operator $KL_{\alpha}$ and some of its properties.} \label{Sec: KL alpha}

For practical reasons and to gather other cases sharing analogous properties, we are interested in dealing with a slight modification of the aforementioned KL-transform. The modification in case consists on a perturbation on the kernel of such transformation. Precisely, for $\alpha\geqslant 0$, let  
\begin{align}
	\label{KL alpha def}
	&	{  KL_{\alpha}[f(x)](\tau)} 
		= 2 \left| \Gamma\left( \alpha+1+\frac{i\tau}{2} \right)\right|^{-2}
		\int_{0}^{\infty} { x^{\alpha}}K_{i\tau}(2\sqrt{x}) f(x) dx \ , 
\\
	\label{KL alpha inverse}
	& 	{ x^{\alpha+1}}\ f(x)= \frac{1}{\pi^{2}} \lim_{\lambda\to \pi -} 
		\int_{0}^{\infty} \tau \sinh(\lambda\tau) \left| \Gamma\left( \alpha+1+\frac{i\tau}{2} \right)\right|^{2} 
		K_{i\tau}(2\sqrt{x})
		\ {  KL_{\alpha}[f](\tau)}d\tau \ .
\end{align}
which is valid for any continuous function  $f \in L_{1}\left(\mathbb{R}_{+}, K_{0}(2\mu\sqrt{x}) dx \right)$, $0<\mu<1$, in a neighborhood of each $x\in\mathbb{R}_{+}$ where $f(x)$ has bounded variation.

Naturally, the identity 
\begin{equation}\label{KL alpha xn}
	KL_{\alpha}[x^{n}](\tau)
	= \left(\alpha+1 -\frac{i \tau }{2}\right)_n \left(\alpha+1 +\frac{i \tau }{2}\right)_n
	= \left| \left( \alpha+1+\frac{i\tau}{2} \right)_{n}\right|^{2} 
\end{equation}
holds, enhancing the fact that $KL_{\alpha}$ is an isomorphism in the vector space $\mathcal{P}$, essentially  performing the passage between the canonical basis $\{x^{n}\}_{n\geqslant0}$ and the central factorial basis $\left\{\left| \left( \alpha+1+\frac{i\tau}{2} \right)_{n}\right|^{2} \right\}_{n\geqslant 0}$. When $\alpha=0$ or $\alpha=1/2$ we recover the central factorials treated in \cite{Riordan2} of even and odd order, respectively (see also \cite{LouYak2012,LouMarYak2011}). 

Besides, from the definition \eqref{KL alpha def}, we readily observe that 
\begin{equation}\label{prop KL alpha+gamma}
	KL_{\alpha+\beta}[f(x)](\tau)
	= \frac{ \left| \Gamma\left( \alpha+1+\frac{i\tau}{2} \right)\right|^{2}}
	{ \left| \Gamma\left( \alpha+\beta+1+\frac{i\tau}{2} \right)\right|^{2}} 
	KL_{\alpha}[x^{\beta} f(x)](\tau) , 
\end{equation}
and, in particular, when $\beta=n\in\mathbb{N}_{0}$, we have 
\begin{equation}\label{prop KL alpha+n}
	KL_{\alpha}[x^{n}f(x)](\tau)
	=  \left| \left( \alpha+1+\frac{i\tau}{2} \right)_{n}\right|^{2} KL_{\alpha+n}[f(x)](\tau)
	= KL_{\alpha}[x^{n}](\tau) \  KL_{\alpha+n}[f(x)](\tau) \ .
\end{equation}

As a matter of fact, the action of the $KL_{\alpha}$ operator acting on $\mathcal{P}$ can be viewed as the passage from differential relations into central difference relations, as it can be perceived from \eqref{KL alpha xn}.  To be more specific, let us represent the central difference operator by $\delta_{\omega}$, defined through 
\begin{equation}\label{delta w}
	(\delta_{\omega} f)(\tau):=\frac{f(\tau+\omega)-f(\tau-\omega)}{2\omega\tau}
\end{equation}
for some complex number $\omega\neq0$. 

\begin{lemma} For any $f\in\mathcal{P}$, the following identities hold  
\begin{equation}
		\left((\alpha+1)^{2}+\frac{\tau^{2}}{4}\right)\delta_{i}^{2} \left(  KL_{\alpha}[ f(x) ](\tau)\right)
	= KL_{\alpha}\left[ x \frac{d^{2}}{dx^{2}} f(x)\right] (\tau)
	\ , \ n\geqslant 0, 
\end{equation}
while 
\begin{equation}\label{KL alpha + 1/2}
	KL_{\alpha+1/2} \left[ \frac{d}{dx} f(x) \right] (\tau)
	= \delta_{i} \Big( KL_{\alpha}[ f(x) ] (\tau)\Big) \ . 
\end{equation}
\end{lemma}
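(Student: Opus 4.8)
The plan is to reduce everything to the monomial case and then run a short, induction-free computation with Pochhammer symbols. Since $KL_{\alpha}$ is linear and $\{x^{n}\}_{n\geqslant 0}$ is a basis of $\mathcal{P}$, both identities will follow once they are checked on $f(x)=x^{n}$; moreover, because $f\in\mathcal{P}$, every expression involved is a genuine polynomial in $\tau$, so the shifts $\tau\mapsto\tau\pm i$ occurring in $\delta_{i}$ are mere analytic continuations and raise no convergence issue. The whole argument therefore rests on the explicit formula \eqref{KL alpha xn}, $KL_{\alpha}[x^{n}](\tau)=\bigl(\alpha+1-\tfrac{i\tau}{2}\bigr)_{n}\bigl(\alpha+1+\tfrac{i\tau}{2}\bigr)_{n}$.

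First I would record the decisive observation that feeding $\tau\mapsto\tau\pm i$ into this formula shifts the two arguments in a complementary way, since $\tfrac{i(\tau+i)}{2}=\tfrac{i\tau}{2}-\tfrac12$ and $\tfrac{i(\tau-i)}{2}=\tfrac{i\tau}{2}+\tfrac12$. Writing $P=\alpha+\tfrac12-\tfrac{i\tau}{2}$ and $Q=\alpha+\tfrac12+\tfrac{i\tau}{2}$ (so that $Q-P=i\tau$), this gives $KL_{\alpha}[x^{n}](\tau+i)=(P+1)_{n}(Q)_{n}$ and $KL_{\alpha}[x^{n}](\tau-i)=(P)_{n}(Q+1)_{n}$. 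The contiguous relation $(P+1)_{n}=\tfrac{P+n}{P}(P)_{n}$ then yields the telescoping difference $(P+1)_{n}(Q)_{n}-(P)_{n}(Q+1)_{n}=(P)_{n}(Q)_{n}\,\tfrac{n(Q-P)}{PQ}$. Dividing by the $\tau$-weight in the definition \eqref{delta w} of $\delta_{i}$ and using $(P)_{n}/P=(P+1)_{n-1}$ and $(Q)_{n}/Q=(Q+1)_{n-1}$, I obtain the single master identity \[\delta_{i}\bigl(KL_{\alpha}[x^{n}]\bigr)(\tau)=n\,(P+1)_{n-1}(Q+1)_{n-1}=n\,KL_{\alpha+1/2}[x^{n-1}](\tau),\] the point being that $(P+1)_{n-1}=(\alpha+\tfrac32-\tfrac{i\tau}{2})_{n-1}$ and $(Q+1)_{n-1}=(\alpha+\tfrac32+\tfrac{i\tau}{2})_{n-1}$ are exactly the Pochhammer factors of $KL_{\alpha+1/2}[x^{n-1}]$. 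Since $\tfrac{d}{dx}x^{n}=nx^{n-1}$, this master identity \emph{is} \eqref{KL alpha + 1/2} on monomials, so the second claim follows immediately by linearity.

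For the first identity I would simply iterate the master identity. Applying it once more, now with $\alpha$ replaced by $\alpha+\tfrac12$, gives $\delta_{i}^{2}\bigl(KL_{\alpha}[x^{n}]\bigr)=n(n-1)\,KL_{\alpha+1}[x^{n-2}]=n(n-1)(\alpha+2-\tfrac{i\tau}{2})_{n-2}(\alpha+2+\tfrac{i\tau}{2})_{n-2}$. Finally I multiply by the prefactor, rewritten as $(\alpha+1)^{2}+\tfrac{\tau^{2}}{4}=\bigl(\alpha+1-\tfrac{i\tau}{2}\bigr)\bigl(\alpha+1+\tfrac{i\tau}{2}\bigr)$; these two outer factors splice onto the shifted Pochhammer symbols via $(\alpha+1\mp\tfrac{i\tau}{2})(\alpha+2\mp\tfrac{i\tau}{2})_{n-2}=(\alpha+1\mp\tfrac{i\tau}{2})_{n-1}$ (for either sign), reconstituting $n(n-1)\,KL_{\alpha}[x^{n-1}]$. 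Since $x\tfrac{d^{2}}{dx^{2}}x^{n}=n(n-1)x^{n-1}$, this is exactly $KL_{\alpha}\!\left[x\tfrac{d^{2}}{dx^{2}}x^{n}\right]$, as required.

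The only genuinely delicate point is the bookkeeping of which parameter moves where: the weighted central difference $\delta_{i}$ raises the underlying parameter by $\tfrac12$ (whence the appearance of $KL_{\alpha+1/2}$ and of the $\tfrac1\tau$-weight built into \eqref{delta w}), so that the two applications of $\delta_{i}$ in the second-order identity push $\alpha$ up by a full unit, and it is the explicit prefactor $(\alpha+1)^{2}+\tfrac{\tau^{2}}{4}$ — not any accidental cancellation — that brings the parameter back to $\alpha$ and the degree back to $n-1$. Before writing out the general step I would verify the constant in \eqref{delta w} against the $n=1$ instance, where $KL_{\alpha}[x]=(\alpha+1)^{2}+\tfrac{\tau^{2}}{4}$, to make sure the normalization is consistent with the master identity.
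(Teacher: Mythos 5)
Your proposal is correct and follows essentially the same route as the paper: both reduce to monomials by linearity, establish the first-order action of $\delta_{i}$ on the central factorials (the paper's identity \eqref{delta i over factorials}), iterate it for the second-order case, and splice the prefactor $(\alpha+1)^{2}+\tfrac{\tau^{2}}{4}$ back onto the shifted Pochhammer symbols --- you merely spell out, via the contiguous relation $(P+1)_{n}=\tfrac{P+n}{P}(P)_{n}$, the algebra the paper dismisses as ``some algebraic calculations''. Your closing caution about the normalization is well taken: with \eqref{delta w} read literally the difference quotient yields $n/2$ rather than $n$, a factor-of-two discrepancy already present between \eqref{delta w} and \eqref{delta i over factorials} in the paper itself, so the definition of $\delta_{\omega}$ must be understood with denominator $\omega\tau$ for the lemma to hold as stated.
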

\begin{proof} After some algebraic calculations, the action of the operator $\delta_{i}$ over the central factorials $(a+i\tau/2)_{n} (a-i\tau/2)_{n}$ brings 
\begin{equation}\label{delta i over factorials}
	\delta_{i} \Big( (a+i\tau/2)_{n} (a-i\tau/2)_{n} \Big)
	= n  (a+1/2+i\tau/2)_{n-1} (a+1/2-i\tau/2)_{n-1} 
\end{equation}
and therefore 
\begin{equation*}
	\delta_{i}^{2} \Big( (a+i\tau/2)_{n} (a-i\tau/2)_{n} \Big)
	= n(n-1)  (a+1+i\tau/2)_{n-2} (a+1-i\tau/2)_{n-2} , 
\end{equation*}
or, equivalently,  
$$
	\left(a^{2}+\frac{\tau^{2}}{4}\right)\delta_{i}^{2} \Big( (a+i\tau/2)_{n} (a-i\tau/2)_{n} \Big) = n(n-1)  (a+i\tau/2)_{n-1} (a-i\tau/2)_{n-1} \ , \ n\geqslant 1. 
$$
While from this latter equality we may read 
$$
	\left((\alpha+1)^{2}+\frac{\tau^{2}}{4}\right)\delta_{i}^{2} \left(  KL_{\alpha}[ x^{n} ](\tau)\right)
	= KL_{\alpha}\left[ x \frac{d^{2}}{dx^{2}} x^{n}\right] (\tau)
	\ , \ n\geqslant 0, 
$$
the relation \eqref{delta i over factorials} provides 
\begin{equation*}
	KL_{\alpha+1/2} \left[ \frac{d}{dx} x^{n}\right] (\tau)
	= \delta_{i} \Big(KL_{\alpha}[x^{n}] (\tau)\Big) \ , \ n\in\mathbb{N}_{0}. 
\end{equation*}

%

The result now follows because of the fact that  $\{x^{n}\}_{n\geqslant 0}$ forms a basis of $\mathcal{P}$. 
\end{proof}

Following the results in  \cite[Ch. 2]{YakubovichBook1996}, we have the following analog of the Plancherel theorem for transform \eqref{KL alpha def}. 
\begin{theorem}\label{Thm: Parseval orginal} The operator $KL_{\alpha}$ is an isomorphism between Hilbert spaces 
$$
	KL_{\alpha}: \ L_{2}(\mathbb{R}_{+};x^{2\alpha+1} dx ) \rightarrow 
		L_{2}\left(\mathbb{R}_{+}; \tau \sinh(\pi \tau) 
	\left| \Gamma\left(\alpha+1+\tfrac{i\tau}{2}\right)\right|^{4} \frac{d\tau}{4\pi^{2}}\right) 
$$
where integral \eqref{KL alpha def} is understood in the mean square sense with respect to the norm in the image space. Reciprocally, the inverse operator has the form \eqref{KL alpha inverse} with the corresponding integral in the mean square sense by the norm in $L_{2}(\mathbb{R}_{+};x^{2\alpha+1} dx ) $ and the following generalized Parseval equality holds 
\begin{equation}\label{Parseval}
	\int_{0}^{\infty} x^{2\alpha+1} f(x) g(x) dx 
	= \frac{1}{4\pi^{2}} \int_{0}^{\infty} \tau \sinh(\pi \tau) 
	\left| \Gamma\left(\alpha+1+\frac{i\tau}{2}\right)\right|^{4} KL_{\alpha}[f](\tau)KL_{\alpha}[g](\tau) d\tau \  ,
\end{equation}
where $f,g\in L_{2}(\mathbb{R}_{+};x^{2\alpha+1} dx ) $. 
\end{theorem}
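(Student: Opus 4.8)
The plan is to reduce the statement to the classical Plancherel theorem for the Kontorovich--Lebedev transform established in \cite[Ch.~2]{YakubovichBook1996}, which asserts that the map $\psi\mapsto\widetilde{\psi}$, with $\widetilde{\psi}(\tau)=\int_{0}^{\infty}K_{i\tau}(y)\psi(y)\frac{dy}{y}$, is an isometric isomorphism of $L_{2}(\mathbb{R}_{+};\frac{dy}{y})$ onto $L_{2}(\mathbb{R}_{+};\frac{2}{\pi^{2}}\tau\sinh(\pi\tau)\,d\tau)$, with $\int_{0}^{\infty}\psi_{1}\psi_{2}\,\frac{dy}{y}=\frac{2}{\pi^{2}}\int_{0}^{\infty}\tau\sinh(\pi\tau)\widetilde{\psi_{1}}\widetilde{\psi_{2}}\,d\tau$. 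Everything analytically hard is already contained in that result; what remains is to identify $KL_{\alpha}$ with this transform, up to an explicit change of variables and a $\tau$-dependent normalization, and then to keep track of the resulting weights.

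First I would perform the substitution $y=2\sqrt{x}$ (so $x=y^{2}/4$, $dx=\frac{y}{2}\,dy$) in the defining integral \eqref{KL alpha def}. A direct computation turns $\int_{0}^{\infty}x^{\alpha}K_{i\tau}(2\sqrt{x})f(x)\,dx$ into $2^{-2\alpha-1}\int_{0}^{\infty}y^{2\alpha+1}K_{i\tau}(y)\varphi(y)\,dy$ with $\varphi(y)=f(y^{2}/4)$; writing $y^{2\alpha+1}\varphi(y)=y^{-1}\psi(y)$ with $\psi(y):=y^{2\alpha+2}f(y^{2}/4)$ recasts this as the classical transform $\widetilde{\psi}(\tau)$. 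Hence $KL_{\alpha}[f](\tau)=2^{-2\alpha}\big|\Gamma(\alpha+1+\tfrac{i\tau}{2})\big|^{-2}\,\widetilde{\psi}(\tau)$, that is $\widetilde{\psi}=2^{2\alpha}|\Gamma(\alpha+1+\tfrac{i\tau}{2})|^{2}\,KL_{\alpha}[f]$.

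Next I would check that $f\mapsto\psi$ is, up to a constant, a unitary between the relevant weighted $L_{2}$ spaces: the same substitution gives $\int_{0}^{\infty}|\psi(y)|^{2}\frac{dy}{y}=2^{4\alpha+3}\int_{0}^{\infty}x^{2\alpha+1}|f(x)|^{2}\,dx$, so $f\in L_{2}(\mathbb{R}_{+};x^{2\alpha+1}dx)$ if and only if $\psi\in L_{2}(\mathbb{R}_{+};\frac{dy}{y})$, with proportional norms. Feeding the two identifications into the classical Parseval equality and substituting back, the powers of $2$ combine as $2^{4\alpha+3}$ on the left against $\frac{2}{\pi^{2}}\,2^{4\alpha}$ on the right, while the factor $2^{2\alpha}|\Gamma(\alpha+1+\tfrac{i\tau}{2})|^{2}$ relating $\widetilde{\psi}$ to $KL_{\alpha}[f]$ gets squared and absorbed into the spectral measure. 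This produces exactly the constant $\frac{1}{4\pi^{2}}$ and the weight $\tau\sinh(\pi\tau)|\Gamma(\alpha+1+\tfrac{i\tau}{2})|^{4}$ of the image space, giving \eqref{Parseval}; polarizing and using that the classical transform is onto yields the isometric isomorphism, and the inversion formula \eqref{KL alpha inverse} follows by transporting the classical KL inversion through the same substitution.

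The main obstacle is not the algebra but the bookkeeping of the $\tau$-dependent normalization: since $|\Gamma(\alpha+1+\tfrac{i\tau}{2})|^{-2}$ is not constant, one must show it reshapes the flat spectral measure $\frac{2}{\pi^{2}}\tau\sinh(\pi\tau)\,d\tau$ into the stated one rather than destroying the isometry, and verifying $\widetilde{\psi}=2^{2\alpha}|\Gamma(\alpha+1+\tfrac{i\tau}{2})|^{2}KL_{\alpha}[f]$ pointwise in $\tau$ is the crux. Secondary technical points are that $\psi(y)=y^{2\alpha+2}f(y^{2}/4)$ genuinely belongs to the domain of the classical theorem for every admissible $f$ (a density argument controlling the behaviour at $0$ and $\infty$ via the asymptotics \eqref{Knu at infty}--\eqref{K0 at 0} and the bound \eqref{ineq Kitau}), and that all integrals, including the regularization $\lim_{\lambda\to\pi^{-}}$ in \eqref{KL alpha inverse}, are read in the mean-square sense, so that the interchanges are first justified on a dense subspace and then extended by continuity.
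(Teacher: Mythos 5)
Your reduction to the classical Kontorovich--Lebedev Plancherel theorem of \cite[Ch.~2]{YakubovichBook1996} via the substitution $y=2\sqrt{x}$ is correct and is essentially what the paper does implicitly---the theorem is stated there without proof as an ``analog'' of the results in that reference, and your bookkeeping of the constants ($2^{4\alpha+3}$ on the left against $\tfrac{2}{\pi^{2}}2^{4\alpha}$ on the right) reproduces \eqref{Parseval} exactly. One small remark: transporting the classical inversion through the same substitution actually yields the constant $\tfrac{1}{2\pi^{2}}$ rather than the $\tfrac{1}{\pi^{2}}$ printed in \eqref{KL alpha inverse} (the printed constant is also inconsistent with \eqref{KL s inverse} at $\alpha=0$, since $\tau\sinh(\pi\tau)\left|\Gamma\left(1+\tfrac{i\tau}{2}\right)\right|^{2}=\pi\tau^{2}\cosh(\pi\tau/2)$), so that discrepancy is a typo in the paper rather than a gap in your argument.
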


Despite the underlying conditions in the latter theorem, \eqref{Parseval} remains nevertheless valid if one of the functions is a polynomial, as long as further conditions are assumed over the other function. 

\begin{proposition}\label{Prop: Parseval for polys} For any $f\in\mathcal{P}$ and any nonnegative function $g\in L_{2}(\mathbb{R}_{+};x^{2\alpha+1} dx ) $, we have  
\begin{equation}\label{Parseval for polys}
	\int_{0}^{\infty} x^{2\alpha+1} f(x) g(x) dx 
	= \frac{1}{4\pi} \int_{0}^{\infty}  KL_{\alpha}[f](\tau) 
	\frac{
	\left| \Gamma\left(\alpha+1+\frac{i\tau}{2}\right)\right|^{4}}{\left| \Gamma(i\tau)\right|^{2}} KL_{\alpha}[g](\tau) d\tau \  ,
\end{equation}
as long as $KL_{\alpha}[g] \in L_{1} (\mathbb{R}_{+} \ , \ \e^{(\frac{\pi}{2}-\delta)\tau} (1+\tau)^{2(\alpha+1)} d\tau)$ for some $\delta \in (0,\pi/2)$.
\end{proposition}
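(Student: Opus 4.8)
The plan is to read Proposition~\ref{Prop: Parseval for polys} as the Parseval identity \eqref{Parseval} of Theorem~\ref{Thm: Parseval orginal}, but extended from square-integrable $f$ to \emph{polynomial} $f$. The first step is purely algebraic: by the reflection formula $\Gamma(z)\Gamma(1-z)=\pi/\sin(\pi z)$ together with $\Gamma(-i\tau)=\Gamma(1-i\tau)/(-i\tau)$ and $\sin(\pi i\tau)=i\sinh(\pi\tau)$, one obtains $|\Gamma(i\tau)|^{-2}=\tau\sinh(\pi\tau)/\pi$. Inserting this into the kernel of \eqref{Parseval for polys} turns its right-hand side into exactly the right-hand side of \eqref{Parseval}. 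Hence the entire content of the proposition is the justification that \eqref{Parseval} survives when $f$ is a polynomial, so that $f\notin L_{2}(\mathbb{R}_+;x^{2\alpha+1}dx)$ and the theorem does not apply directly, under the stated integrability condition on $g$. Since both sides depend linearly on $f$, it suffices to treat $f(x)=x^{n}$.

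Next I would insert the integral definition \eqref{KL alpha def} of $KL_{\alpha}[f]$ into the right-hand side of \eqref{Parseval for polys}; this is legitimate because for a polynomial $f$ the defining integral $\int_{0}^{\infty}x^{\alpha}K_{i\tau}(2\sqrt{x})f(x)dx$ converges absolutely, by the exponential decay \eqref{Knu at infty} of $K_{i\tau}$ at infinity and its mild logarithmic singularity \eqref{K0 at 0} at the origin (the nonnegativity of $g$ guarantees the analogous absolute convergence of the integrals built from $g$). After replacing $|\Gamma(i\tau)|^{-2}$ by $\tau\sinh(\pi\tau)/\pi$, the right-hand side becomes a double integral in $(x,\tau)$, and the heart of the matter is to interchange the order of integration. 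I would bound the kernel using \eqref{ineq Kitau} with $m=0$, namely $|K_{i\tau}(2\sqrt{x})|\le \e^{-\delta\tau}K_{0}(2\sqrt{x}\cos\delta)$, which factorises the modulus of the integrand into an $x$-factor times a $\tau$-factor. The $x$-integral $\int_{0}^{\infty}|f(x)|x^{\alpha}K_{0}(2\sqrt{x}\cos\delta)dx$ converges by the same asymptotics, while the $\tau$-integral, after the Stirling estimate $|\Gamma(\alpha+1+i\tau/2)|^{2}\sim 2\pi(\tau/2)^{2\alpha+1}\e^{-\pi\tau/2}$, reduces to a constant multiple of $\int_{0}^{\infty}(1+\tau)^{2(\alpha+1)}\e^{(\pi/2-\delta)\tau}\,|KL_{\alpha}[g](\tau)|\,d\tau$, which is finite \emph{precisely} by the hypothesis $KL_{\alpha}[g]\in L_{1}(\mathbb{R}_{+},\e^{(\pi/2-\delta)\tau}(1+\tau)^{2(\alpha+1)}d\tau)$. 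This Fubini justification is the step the technical assumption is tailored for, and I expect it to be the main obstacle.

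Once the interchange is justified I would integrate in $\tau$ first. With $|\Gamma(i\tau)|^{-2}$ rewritten as above, the inner $\tau$-integral is, up to the normalising constant, the inverse transform \eqref{KL alpha inverse} applied to $g$, and therefore equals a multiple of $x^{\alpha+1}g(x)$. To be rigorous about the limiting factor $\lim_{\lambda\to\pi^{-}}$ in \eqref{KL alpha inverse}, I would run the computation with $\sinh(\lambda\tau)$, $\lambda<\pi$, in place of $\sinh(\pi\tau)$ (the convergence estimates only improve, as $\lambda/2<\pi/2$), identify the inner integral via \eqref{KL alpha inverse} with $x^{\alpha+1}g(x)$ in the limit, and then let $\lambda\to\pi^{-}$ by dominated convergence, the dominating function again being supplied by the same hypothesis. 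Substituting $x^{\alpha+1}g(x)$ back and collecting the constants collapses the surviving $x$-integral to $\int_{0}^{\infty}x^{2\alpha+1}f(x)g(x)dx$, which is the left-hand side; undoing the reduction to monomials by linearity then yields the claim for every $f\in\mathcal{P}$.
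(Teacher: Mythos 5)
Your reduction of \eqref{Parseval for polys} to \eqref{Parseval} via $|\Gamma(i\tau)|^{-2}=\pi^{-1}\tau\sinh(\pi\tau)$ is correct, and your Fubini estimate (the bound \eqref{ineq Kitau} with $m=0$ plus Stirling, dominating everything by $\int_0^\infty \e^{(\pi/2-\delta)\tau}(1+\tau)^{2(\alpha+1)}\left|KL_{\alpha}[g](\tau)\right|d\tau$) is exactly the estimate the paper also makes. But your route differs from the paper's in a way that opens two genuine gaps. The paper does \emph{not} expand $KL_{\alpha}[f]$ and invert: it regularizes $f$, setting $f_{\epsilon}(x)=\e^{-2\epsilon\sqrt{x}}x^{n}\in L_{2}(\mathbb{R}_{+};x^{2\alpha+1}dx)$, applies the already-established identity \eqref{Parseval} of Theorem \ref{Thm: Parseval orginal} to the pair $(f_{\epsilon},g)$, and lets $\epsilon\to0^{+}$ --- by Weierstrass/dominated convergence on the $\tau$-side (where the hypothesis on $KL_{\alpha}[g]$ enters) and by Fatou plus Levi's monotone convergence on the $x$-side (where the nonnegativity of $g$ enters; it does \emph{not}, as you suggest, give absolute convergence of $\int_{0}^{\infty}x^{\alpha}K_{i\tau}(2\sqrt{x})g(x)dx$, since $K_{i\tau}$ oscillates in sign and $g$ is only assumed to be in $L_{2}$).

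The first gap is the step identifying the inner $\tau$-integral with a multiple of $x^{\alpha+1}g(x)$ via \eqref{KL alpha inverse}. Under the stated hypotheses on $g$, Theorem \ref{Thm: Parseval orginal} gives the inversion only in the mean-square sense, and the pointwise statement accompanying \eqref{KL alpha inverse} is asserted for continuous functions of bounded variation in $L_{1}(\mathbb{R}_{+},K_{0}(2\mu\sqrt{x})dx)$, not for an arbitrary $g$ as in the proposition; to equate your (absolutely convergent) integral with $x^{\alpha+1}g(x)$ for a.e.\ $x$ you need an extra argument (e.g.\ that the partial integrals $\int_{0}^{N}$ converge both in $L_{2}(x^{2\alpha+1}dx)$ and pointwise, hence to the same limit a.e.), which you do not supply. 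The second gap is the constant bookkeeping, which you leave implicit and which in fact does not close: substituting \eqref{KL alpha def} into the right-hand side of \eqref{Parseval for polys}, interchanging, and evaluating the inner integral by \eqref{KL alpha inverse} \emph{as printed} yields $\tfrac{1}{2}\int_{0}^{\infty}x^{2\alpha+1}f(x)g(x)dx$, not the left-hand side. The discrepancy traces to the paper itself: at $\alpha=0$ the pair \eqref{KL alpha def}--\eqref{KL alpha inverse} disagrees by a factor $2$ with the consistent pair \eqref{KL s directa}--\eqref{KL s inverse} (the constant in \eqref{KL alpha inverse} should be $\tfrac{1}{2\pi^{2}}$), and since your method manufactures the Parseval identity out of the inversion formula it inherits this inconsistency, whereas the paper's proof, resting only on \eqref{Parseval}, does not. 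The strategy is salvageable, but as written the final claim that ``collecting the constants'' produces $\int_{0}^{\infty}x^{2\alpha+1}f(x)g(x)dx$ is unverified and, with the paper's printed normalization, false.
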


\begin{proof} 
 Consider for  sufficiently small positive $\epsilon$ and $n\in\mathbb{N}_{0}$, $f_{\epsilon}(x)=\e^{-2\epsilon\sqrt{x}}x^{n}\to x^{n}$, as $\epsilon\to0$. On the grounds of Theorem \ref{Thm: Parseval orginal}, the identity  \eqref{Parseval} yields  
\begin{equation} \label{proof Parseval g any}
		 \int_{0}^{\infty} x^{2\alpha+1} f_{\epsilon}(x) g(x) dx 
	= \frac{1}{4\pi} \int_{0}^{\infty}  KL_{\alpha}[f_{\epsilon}](\tau) 
	\frac{
	\left| \Gamma\left(\alpha+1+\frac{i\tau}{2}\right)\right|^{4}}{\left| \Gamma(i\tau)\right|^{2}} KL_{\alpha}[g](\tau) d\tau \ .
\end{equation}
Let 
$$
	\varphi_{\epsilon}(\tau) :=\frac{
	\left| \Gamma\left(\alpha+1+\frac{i\tau}{2}\right)\right|^{4}}{\left| \Gamma(i\tau)\right|^{2}} 
	KL_{\alpha}\left[ f_{\epsilon}(x) \right](\tau) 
	= \frac{
	\left| \Gamma\left(\alpha+1+\frac{i\tau}{2}\right)\right|^{2}}{\left| \Gamma(i\tau)\right|^{2}} 
	\frac{1}{2^{2(n+\alpha)}}\int_{0}^{\infty} x^{2(n+\alpha)+1} \e^{-\epsilon x} K_{i\tau}(x) dx.
$$
Taking into account inequality \eqref{ineq Kitau} (with $m=0$) along with the Stirling asymptotic formula for the Gamma function we obtain the estimate 
$$
	\left| \varphi_{\epsilon}(\tau) \right| 
	\leqslant C_{n,\delta}\, \e^{(\frac{\pi}{2}-\delta) \tau} |\tau|^{2(\alpha+1)} \int_{0}^{\infty}x^{2(n+\alpha)+1} K_{0}(x\cos \delta) dx
	= O( \e^{(\frac{\pi}{2}-\delta) \tau} \tau^{2(\alpha+1)}),\  \tau\to +\infty,
$$
for some $\delta\in(0,\pi/2)$ and $C_{n,\delta}$ does not depend on $\epsilon$. 
Thus, as long as the function $g$ satisfies the claimed conditions, it readily follows 
\begin{equation}\label{proof Parseval rel2}
	\int_{0}^{\infty} \left| \varphi_{\epsilon}(\tau) \right| \left| 
	KL_{\alpha}[g](\tau)\right|  d\tau
	< C_{\alpha} , 
\end{equation}
where $C_{\alpha}>0$ does not depend on $\epsilon$. Therefore the integral on the right hand side of \eqref{proof Parseval g any} converges absolutely and uniformly as $\epsilon\to 0+$ by virtue of Weierstrass test.

On the other hand, via Fatou lemma along with  \eqref{proof Parseval g any}-\eqref{proof Parseval rel2}, we have 
$$
	 \int_{0}^{\infty}  x^{2\alpha+1+n}  g(x)  dx
	 \leqslant 
	\lim_{\epsilon\to0}  \int_{0}^{\infty}  x^{2\alpha+1} f_{\epsilon}(x)  g(x)  dx < C_{\alpha}.
$$
Moreover, since $f_{\epsilon}(x)$ is a monotone increasing as $\epsilon$ approaches $0$, by Levi's theorem, the passage to the limit under the integral signs through \eqref{proof Parseval g any} is allowed. 

The result now follows due to the fact that $\lim_{\epsilon\to0}KL_{\alpha}[f_{\epsilon}](\tau)=KL_{\alpha}[f](\tau)$ and also because $\{x^{n}\}_{n\geqslant0}$ forms a basis of $\mathcal{P}$. 
\end{proof}

In particular, we have: 

\begin{example}\label{Example: Parseval Exponential} The particular choice of $g(x)=x^{\beta-\alpha-1}\e^{-x}$, which belongs to $L_{2}(\mathbb{R}_{+};x^{2\alpha+1} dx ) $ as long as $\alpha+\beta+1>0$. On the other hand, we also have  $KL_{\alpha}\left[ g(x) \right](\tau) \in L_{1} (\mathbb{R}_{+} \ , \ \e^{(\frac{\pi}{2}-\delta)\tau} (1+\tau)^{2(\alpha+1)} d\tau)$ for some $\delta \in (\pi/4,\pi/2) $ inasmuch as, on the grounds of relation (2.16.8.4) in \cite{PrudnikovMarichev}, we have 
$$
	\left| \Gamma\left(\alpha+1+\frac{i\tau}{2}\right)\right|^{2} KL_{\alpha}[g](\tau)
	=  \sqrt{e} \left| \Gamma\left(\beta+\frac{i\tau}{2}\right)\right|^{2} 
	W_{-\beta+\frac{1}{2},\frac{i\tau}{2}}(1)
		= O\left( \e^{\pi \tau/4}\tau^{\beta}\right) \ , \ \tau\to +\infty \ , 
$$
considering the Stirling asymptotic formula for the Gamma function together with the asymptotic expansion by index $\tau$ for the Whittaker function $W_{\mu,i\tau}(x)$ \cite[p.25]{YakubovichBook1996}.
In the light of Proposition \ref{Prop: Parseval for polys}, the equality 
\begin{align} \label{Parseval Exponential}
	\ds \int_{0}^{\infty} x^{\alpha+\beta} \e^{-x} f(x) dx 
	= \frac{\sqrt{\e}}{4\pi} \int_{0}^{\infty} 
	KL_{\alpha}[f](\tau)  \frac{
	\left|\Gamma\left(\alpha+1+\frac{i\tau}{2}\right)  
	\Gamma\left(\beta +\frac{i\tau}{2}\right)\right|^{2}}
	{\left|\Gamma\left( i\tau \right)\right|^{2}}  
	W_{-\beta+\frac{1}{2},\frac{i\tau}{2}}(1)
	 d\tau \ , 
\end{align}
holds, for any $f\in\mathcal{P}$ and $\beta>0$. 
\end{example}

\begin{corollary} The identity \eqref{Parseval for polys} remains valid for any complex valued function $g\in L_{2}(\mathbb{R}_{+};x^{2\alpha+1} dx ) $ such that 
$KL_{\alpha}\left[\mathop{{\rm Re}(g)}_{\left\{\mathop{}{+ \atop -}\right\} }\right],
KL_{\alpha}\left[\mathop{{\sf Im}(g)}_{\left\{\mathop{}{+ \atop -}\right\} }\right]
 \in L_{1} (\mathbb{R}_{+} \ , \ \e^{(\frac{\pi}{2}-\delta)\tau} (1+\tau)^{2(\alpha+1)} d\tau)$ for some $\delta \in (0,\pi/2)$, where $h_{+}=\max(0,h)$ and $h_{-}=-\min(0,h)$ denote the positive and negative parts of $h$.   
\end{corollary}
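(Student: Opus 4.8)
The plan is to reduce the statement to the already-established Proposition \ref{Prop: Parseval for polys} by decomposing $g$ into nonnegative pieces and invoking the linearity of $KL_{\alpha}$. Concretely, I would write
$$
	g = g_{1} - g_{2} + i\left( g_{3} - g_{4}\right), \quad
	g_{1} = \big({\rm Re}(g)\big)_{+}, \ g_{2} = \big({\rm Re}(g)\big)_{-}, \
	g_{3} = \big({\rm Im}(g)\big)_{+}, \ g_{4} = \big({\rm Im}(g)\big)_{-},
$$
so that each $g_{j}$ is a nonnegative function. First I would check that each $g_{j}$ still lies in $L_{2}(\mathbb{R}_{+};x^{2\alpha+1} dx)$, which is immediate from the pointwise bound $0 \leqslant g_{j}(x) \leqslant |g(x)|$. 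By hypothesis each of the four transforms $KL_{\alpha}[g_{j}]$ belongs to $L_{1}(\mathbb{R}_{+}, \e^{(\frac{\pi}{2}-\delta)\tau}(1+\tau)^{2(\alpha+1)} d\tau)$, so every $g_{j}$ satisfies exactly the hypotheses of Proposition \ref{Prop: Parseval for polys}.

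Next I would apply Proposition \ref{Prop: Parseval for polys} to each of the four nonnegative functions $g_{j}$ separately, obtaining four copies of \eqref{Parseval for polys}. In particular this guarantees that, for the fixed polynomial $f$, each integral $\int_{0}^{\infty} x^{2\alpha+1} f(x) g_{j}(x)\, dx$ is finite and equals the corresponding spectral integral. I would then recombine these identities using the linearity of the integral on the left and the linearity of $KL_{\alpha}$ on the right: since $KL_{\alpha}$ is an isomorphism of vector spaces (Theorem \ref{Thm: Parseval orginal}), one has
$$
	KL_{\alpha}[g] = KL_{\alpha}[g_{1}] - KL_{\alpha}[g_{2}] + i\big( KL_{\alpha}[g_{3}] - KL_{\alpha}[g_{4}]\big),
$$
and the integrand on the right-hand side of \eqref{Parseval for polys} depends linearly on $KL_{\alpha}[g]$. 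Forming the combination $(\text{copy for } g_{1}) - (\text{copy for } g_{2}) + i\,[(\text{copy for } g_{3}) - (\text{copy for } g_{4})]$ then reproduces \eqref{Parseval for polys} for $g$.

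The only point requiring a little care --- and the reason the corollary is not completely immediate from the Proposition --- is the well-definedness of the two sides for a genuinely complex $g$. The proof of Proposition \ref{Prop: Parseval for polys} rests on the monotone convergence (Levi) and Fatou arguments, both of which exploit the nonnegativity of $g$ and so cannot be applied to a signed or complex integrand directly. The decomposition above is precisely what circumvents this: the finiteness established for each nonnegative $g_{j}$ shows that both $\int_{0}^{\infty} x^{2\alpha+1} f(x) g(x)\, dx$ and the spectral integral converge --- the latter absolutely, by the $L_{1}$ bound on each $KL_{\alpha}[g_{j}]$ together with the estimate on $KL_{\alpha}[f]$ times the kernel factor $|\Gamma(\alpha+1+\tfrac{i\tau}{2})|^{4}/|\Gamma(i\tau)|^{2}$ derived in that proof --- so that the linear recombination is legitimate. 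I expect no further estimate to be needed beyond those already produced in the proof of Proposition \ref{Prop: Parseval for polys}.
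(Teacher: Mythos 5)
Your proposal is correct and follows essentially the same route as the paper, whose proof is simply to split into positive and negative parts (of the real and imaginary components) and apply Proposition \ref{Prop: Parseval for polys} to each nonnegative piece before recombining by linearity. Your additional remarks on why each piece remains in $L_{2}(\mathbb{R}_{+};x^{2\alpha+1}dx)$ and why the recombination is legitimate merely make explicit what the paper leaves implicit.
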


\begin{proof}  We write $h=h_{+}-h_{-}$ and then we apply the preceding Proposition \ref{Prop: Parseval for polys} to $h_{+}$ and $h_{-}$.  
\end{proof}
 
In general, the conditions on $KL_{\alpha}\left[\mathop{{\rm Re}(g)}_{\left\{\mathop{}{+ \atop -}\right\} }\right],
KL_{\alpha}\left[\mathop{{\sf Im}(g)}_{\left\{\mathop{}{+ \atop -}\right\} }\right]$ are difficult to verify. However, there are some cases where we are able to compute $KL_{\alpha}[g]$, avoiding further inspections over the real and imaginary parts.

\begin{corollary}\label{prop: Parseval} For any $f\in\mathcal{P}$ and $|{\sf Im} \mu|<2\beta$,  the following identity takes place  
\begin{align}\label{rel Parseval Kitau}
\begin{array}{l}		
	\ds \int_{0}^{\infty} x^{\alpha+\beta} f(x) K_{i\mu}(2\sqrt{x})dx \\
	\ds = \frac{1}{8\pi\Gamma(2\beta) } \int_{0}^{\infty} KL_{\alpha}[f](\tau)
		 \frac{\left| 
		\Gamma\left(\beta+\frac{i(\tau+\mu)}{2}\right)
		\Gamma\left(\beta+\frac{i(\tau-\mu)}{2}\right)  
		\Gamma\left(\alpha+1+\frac{i\tau}{2}\right)\right|^{2}}
		{ \left| \Gamma\left(i\tau\right)\right|^{2}}
		d\tau \ .
\end{array}
\end{align}
\end{corollary}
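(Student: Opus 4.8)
The plan is to obtain \eqref{rel Parseval Kitau} as a direct specialization of the Parseval-type identity \eqref{Parseval for polys}, applied to the weight $g(x)=x^{\beta-\alpha-1}K_{i\mu}(2\sqrt{x})$. With this choice one has $x^{2\alpha+1}g(x)=x^{\alpha+\beta}K_{i\mu}(2\sqrt{x})$, so the left-hand side of \eqref{Parseval for polys} reproduces exactly $\int_{0}^{\infty}x^{\alpha+\beta}f(x)K_{i\mu}(2\sqrt{x})\,dx$. First I would check that $g\in L_{2}(\mathbb{R}_{+};x^{2\alpha+1}dx)$: at infinity the exponential decay \eqref{Knu at infty} is more than enough, while near the origin the asymptotics \eqref{K0 at 0} give $|K_{i\mu}(2\sqrt{x})|=O(x^{-|{\sf Im}\,\mu|/2})$, so $x^{2\alpha+1}|g(x)|^{2}$ behaves like $x^{2\beta-1-|{\sf Im}\,\mu|}$ near $0$; this is integrable precisely under the hypothesis $|{\sf Im}\,\mu|<2\beta$, which is where that condition enters.

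The heart of the computation is the evaluation of $KL_{\alpha}[g]$. By the definition \eqref{KL alpha def}, $KL_{\alpha}[g](\tau)=2\,\left|\Gamma(\alpha+1+\tfrac{i\tau}{2})\right|^{-2}\int_{0}^{\infty}x^{\beta-1}K_{i\tau}(2\sqrt{x})K_{i\mu}(2\sqrt{x})\,dx$, and after the substitution $2\sqrt{x}=v$ this reduces to a tabulated Mellin-type integral of a product of two Macdonald functions from \cite{PrudnikovMarichev}, valid for $2\beta>|{\sf Im}\,\mu|$. That integral returns a product of four Gamma factors which, for real $\tau$, pair into $\left|\Gamma(\beta+\tfrac{i(\tau+\mu)}{2})\right|^{2}\left|\Gamma(\beta+\tfrac{i(\tau-\mu)}{2})\right|^{2}$; keeping track of the powers of $2$ one finds $KL_{\alpha}[g](\tau)=\tfrac{1}{2\Gamma(2\beta)}\left|\Gamma(\alpha+1+\tfrac{i\tau}{2})\right|^{-2}\left|\Gamma(\beta+\tfrac{i(\tau+\mu)}{2})\right|^{2}\left|\Gamma(\beta+\tfrac{i(\tau-\mu)}{2})\right|^{2}$. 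Substituting this into the right-hand side of \eqref{Parseval for polys}, the factor $\left|\Gamma(\alpha+1+\tfrac{i\tau}{2})\right|^{-2}$ cancels two of the four $\Gamma(\alpha+1+\tfrac{i\tau}{2})$ factors and the numerical constants combine to $\tfrac{1}{8\pi\Gamma(2\beta)}$, which is exactly the claimed right-hand side of \eqref{rel Parseval Kitau}.

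The remaining --- and only delicate --- point is the justification of \eqref{Parseval for polys} itself for this $g$, since $K_{i\mu}(2\sqrt{x})$ oscillates and changes sign, so $g$ is not nonnegative and Proposition \ref{Prop: Parseval for polys} does not apply verbatim. Rather than decomposing $g$ into positive and negative parts (as in the preceding Corollary, whose hypotheses are awkward here), I would re-run the limiting argument of that proposition directly: with $f_{\epsilon}(x)=\e^{-2\epsilon\sqrt{x}}x^{n}$ one applies the genuine Parseval equality \eqref{Parseval}, and the explicit form of $KL_{\alpha}[g]$ together with Stirling's formula bounds the integrand on the right by an integrable majorant uniformly in $\epsilon$, so the Weierstrass test lets the limit pass there. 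On the left, the Levi/monotone-convergence step must be replaced by dominated convergence with majorant $x^{\alpha+\beta+n}|K_{i\mu}(2\sqrt{x})|$; this is integrable on $\mathbb{R}_{+}$ exactly when $|{\sf Im}\,\mu|<2\beta$, by the same use of \eqref{K0 at 0} near $0$ and \eqref{Knu at infty} at infinity. Finally linearity and the fact that $\{x^{n}\}_{n\geqslant 0}$ spans $\mathcal{P}$ give the identity for all $f\in\mathcal{P}$.

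I expect the main obstacle to be twofold: correctly evaluating the product-of-two-Bessel Mellin integral and collapsing its four Gamma factors into the two modulus-squared factors appearing in \eqref{rel Parseval Kitau}, and rigorously vindicating the interchange of limit and integration on the left for the sign-changing kernel, where the hypothesis $|{\sf Im}\,\mu|<2\beta$ is exactly the integrability threshold.
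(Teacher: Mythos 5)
Your proposal is correct and follows essentially the same route as the paper: the paper likewise takes $f_{\epsilon}(x)=\e^{-2\epsilon\sqrt{x}}x^{n}$, applies the genuine Parseval equality \eqref{Parseval} rather than Proposition \ref{Prop: Parseval for polys}, evaluates $KL_{\alpha}[x^{\beta-\alpha-1}K_{i\mu}(2\sqrt{x})]$ via the tabulated product-of-Macdonald-functions integral (relation (2.16.33.2) in \cite{PrudnikovMarichev}) to get exactly the Gamma-factor expression you state, and justifies the passage to the limit by a Weierstrass/dominated-convergence argument on both sides before invoking the basis property of $\{x^{n}\}_{n\geqslant 0}$. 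The only quibble is that the integrability threshold for the left-hand majorant $x^{\alpha+\beta+n}K_{{\sf Im}\mu}(2\sqrt{x})$ is $|{\sf Im}\,\mu|<2(\alpha+\beta+1)$ (as the paper notes), not $|{\sf Im}\,\mu|<2\beta$; the stronger hypothesis $|{\sf Im}\,\mu|<2\beta$ is what the tabulated integral for $KL_{\alpha}[g]$ requires, and it implies the former, so nothing breaks.
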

\begin{proof} 
Consider for  sufficiently small positive $\epsilon$ and $n\in\mathbb{N}_{0}$, $f_{\epsilon}(x)=\e^{-2\epsilon\sqrt{x}}x^{n}$ and $g(x)= x^{\beta-\alpha-1} K_{i\mu}(2\sqrt{x})$, with  $\beta>0$. According to Theorem \ref{Thm: Parseval orginal}, the identity  \eqref{Parseval} yields  
\begin{equation} \label{proof Parseval epsilon K}
		 \int_{0}^{\infty} x^{\alpha+\beta} f_{\epsilon}(x) K_{i\mu}(2\sqrt{x}) dx 
	=\frac{1}{4\pi^{2}} \int_{0}^{\infty} \tau \sinh(\pi \tau) 
	\left| \Gamma\left(\alpha+1+\frac{i\tau}{2}\right)\right|^{4} KL_{\alpha}[f_{\epsilon}](\tau)KL_{\alpha}[g](\tau) d\tau \ .
\end{equation}
On the grounds of relation (2.16.33.2) in \cite{PrudnikovMarichev}, we have 
$$
	\left| \Gamma\left(\alpha+1+\frac{i\tau}{2}\right)\right|^{2} KL_{\alpha}\left[ g(x) \right](\tau) 
	=  \frac{\left| 
	\Gamma\left(\beta+\frac{i(\tau+\mu)}{2}\right)
	\Gamma\left(\beta+\frac{i(\tau-\mu)}{2}\right) \right|^{2}}
	{2\Gamma(2\beta) } \ . 
$$
Let 
$$
	\varphi_{\epsilon}(\tau) :=\left| \Gamma\left(\alpha+1+\frac{i\tau}{2}\right)\right|^{2} KL_{\alpha}\left[ f_{\epsilon}(x) \right](\tau) 
	= \frac{1}{2^{2(n+\alpha)}}\int_{0}^{\infty} x^{2(n+\alpha)+1} \e^{-\epsilon x} K_{i\tau}(x) dx.
$$
Taking into account inequality \eqref{ineq Kitau} (with $m=0$) we obtain the estimate 
$$
	\left| \varphi_{\epsilon}(\tau) \right| 
	\leqslant C_{n,\delta}\, \e^{-\delta \tau} \int_{0}^{\infty}x^{2(n+\alpha)+1} K_{0}(x\cos \delta) dx
	= O(e^{-\delta \tau})
$$
where $\delta\in(0,\pi/2)$ and $C_{n,\delta}$ does not depend on $\epsilon$. Meanwhile, taking into account the Stirling asymptotic formula for Gamma functions  \cite[Vol.I]{Bateman}, we find 
$$
	\frac{\left| 
	\Gamma\left(\beta+\frac{i(\tau+\mu)}{2}\right)
	\Gamma\left(\beta+\frac{i(\tau-\mu)}{2}\right) \right|^{2}}
	{\left| 
	\Gamma\left(i\tau\right) \right|^{2} }
	= O(\tau^{4\beta-1}) \ , \ \tau\to +\infty. 
$$
Consequently, the integral in the right hand side of \eqref{proof Parseval epsilon K} converges absolutely and uniformly by  Weierstrass' test, since 
$$
	\int_{0}^{\infty} \left| \varphi_{\epsilon}(\tau) \right| \frac{\left| 
	\Gamma\left(\beta+\frac{i(\tau+\mu)}{2}\right)
	\Gamma\left(\beta+\frac{i(\tau-\mu)}{2}\right) \right|^{2}}
	{\left| 
	\Gamma\left(i\tau\right) \right|^{2} } d\tau
	= O\left( \int_{0}^{\infty} e^{-\delta \tau} (1+ \tau)^{4\beta-1} d\tau  \right) < \infty. 
$$
On the other hand, the left hand side of the equality \eqref{proof Parseval epsilon K} converges absolutely and uniformly via the straightforward estimate 
$$
	\int _{0}^{\infty} x^{\alpha+\beta} \left| f_{\epsilon}(x) K_{i\mu}(2\sqrt{x})\right| dx
	\leqslant 
	\int _{0}^{\infty} x^{\alpha+\beta+n}  K_{{\sf Im} \mu}(2\sqrt{x}) dx < \infty 
	\ , \ n\geqslant \mathbb{N}_{0}, 
$$
valid under the condition  $|{\sf Im} \mu|< 2(\alpha+\beta+1)$, which is motivated by the asymptotic \eqref{Knu at infty}-\eqref{K0 at 0} of the modified Bessel function. 

Thus, passing to the limit $\epsilon \to 0+$ through equality \eqref{proof Parseval epsilon K} and using \eqref{KL alpha xn}, we come out with the identities 
\begin{equation}\label{proof Parseval Kitau Gammas id} 
	\ds \int_{0}^{\infty} x^{n+\alpha+\beta} K_{i\mu}(2\sqrt{x}) dx 
	=\ds  \int_{0}^{\infty} 
	  \frac{\left| 
	\Gamma\left(\beta+\frac{i(\tau+\mu)}{2}\right)
	\Gamma\left(\beta+\frac{i(\tau-\mu)}{2}\right) \Gamma\left(\alpha+n+1+\frac{i\tau}{2}\right) \right|^{2}}
	{8\pi \Gamma(2\beta) \left| \Gamma(i\tau)\right|^{2}}d\tau  \  ,
\end{equation}
which naturally imply \eqref{rel Parseval Kitau} insofar as $\{x^{n}\}_{n\geqslant 0}$ forms a basis of $\mathcal{P}$. 
\end{proof}

This latter result can be actually interpreted in terms of a integral representation for the $KL_{\alpha}$-transform of a polynomial sequence:  

\begin{remark} For any $\beta,\mu>0$ and $f\in\mathcal{P}$, we have 
\begin{equation}\label{KL alpha of x beta Parseval}
\begin{array}{lcl}
	\ds KL_{\alpha+\beta}[ f(x)](\mu)
	&=& \ds  \frac{1}{4\pi\Gamma(2\beta)\left| \Gamma\left(\alpha+\beta+1+\frac{i\mu}{2}\right)\right|^{2}} \\
	&\times&	\ds \int_{0}^{\infty} KL_{\alpha}[f](\tau)
		 \frac{\left| 
		\Gamma\left(\beta+\frac{i(\tau+\mu)}{2}\right)
		\Gamma\left(\beta+\frac{i(\tau-\mu)}{2}\right)  
		\Gamma\left(\alpha+1+\frac{i\tau}{2}\right)\right|^{2}}
		{ \left| \Gamma\left(i\tau\right)\right|^{2}}
		d\tau \ .
\end{array}
\end{equation}	
\end{remark}

Considering in Proposition \ref{prop: Parseval} the exponential function instead of the modified Bessel function, we can as well find a Parseval-like relation also valid when one of the integrands is a polynomial.

\begin{remark} 
Upon the choice of  $f(x)=x^{n}$, $n\geqslant 0$, in \eqref{rel Parseval Kitau}, we obtain \eqref{proof Parseval Kitau Gammas id}, 
while  \eqref{Parseval Exponential} yields  
\begin{equation}\label{proof Parseval epsilon exp rel2}
	 \int_{0}^{\infty} x^{\alpha+\beta+n}  \e^{-x} dx 
	=\frac{1}{4\pi} \int_{0}^{\infty}  KL_{\alpha}[x^{n}](\tau)  \frac{
	\left|\Gamma\left(\alpha+1+\frac{i\tau}{2}\right)  
	\Gamma\left(\beta+\frac{i\tau}{2}\right)\right|^{2}}
	{\left|\Gamma\left( i\tau \right)\right|^{2}}  
	W_{ -\beta+\frac{1}{2},\frac{i\tau}{2}}(1)
	 d\tau \  ,\  n\geqslant 0.
\end{equation}

Both equalities can be read as integral relations between the Gamma functions, precisely  \eqref{proof Parseval Kitau Gammas id} can be rewritten as 
\begin{eqnarray*}
	&& \frac{1}{2}\left| \Gamma\left( \alpha+\beta+n + \frac{i\mu}{2}\right) 
	\Gamma\left( \alpha+\beta + \frac{i\mu}{2}\right)\right|^{2} \\
	&& = \frac{1}{8\pi \Gamma(2\beta)} \int_{0}^{\infty} 
	  {\left| 
	\Gamma\left(\beta+\frac{i(\tau+\mu)}{2}\right)
	\Gamma\left(\beta+\frac{i(\tau-\mu)}{2}\right) \Gamma\left(\alpha+n+1+\frac{i\tau}{2}\right) \right|^{2}}
	\frac{d\tau}{\left| \Gamma(i\tau)\right|^{2}}
	 \  , \  n\geqslant 0,
\end{eqnarray*}
whereas, from \eqref{proof Parseval epsilon exp rel2} we derive 
\begin{equation*}
	\Gamma(\alpha+\beta+n)
	=\frac{1}{4\pi} \int_{0}^{\infty}   \frac{
	\left|\Gamma\left(\alpha+n+1+\frac{i\tau}{2}\right)  
	\Gamma\left(\beta+\frac{i\tau}{2}\right)\right|^{2}}
	{\left|\Gamma\left( i\tau \right)\right|^{2}}  
	W_{ -\beta+\frac{1}{2},\frac{i\tau}{2}}(1)
	 d\tau \  , \  n\geqslant 0. 
\end{equation*}
As far as we are concerned the two latter identities are new. 
\end{remark}

Another important fact related to the $KL_{\alpha}$-transform lies on the fact that the Bessel function is an eigenfunction of the differential operator $\mathcal{A}$ in \eqref{op A }. The following result is a key ingredient for the achievements in \S\ref{sec: KL MOPS to dMOPS}. 

\begin{lemma}\label{lem:KL alpha differential} For any $m,n\in\mathbb{N}_{0}$ and any $f\in\mathcal{P}$, it is valid 
\begin{equation}
	\label{KL alpha differential}
	KL_{\alpha}\left[ \left( \frac{1}{x} \mathcal{A} x + 2\alpha \frac{d}{dx} x \right)^{m} x^{n}f(x)\right](\tau)
	= (-1)^{m}\left( \frac{\tau^{2}}{4} + \alpha^{2} \right)^{m}
	\left| \left( \alpha+1+\frac{i\tau}{2} \right)_{n}\right|^{2} KL_{\alpha+n}[f](\tau) , 
\end{equation}
where $\mathcal{A}$ represents the operator \eqref{op A }.
\end{lemma}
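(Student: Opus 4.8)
The plan is to isolate the single ``base'' intertwining identity
$$KL_{\alpha}[\mathcal{B}g](\tau) = -\left(\frac{\tau^{2}}{4}+\alpha^{2}\right)KL_{\alpha}[g](\tau), \qquad g\in\mathcal{P},$$
where I abbreviate $\mathcal{B} := \tfrac{1}{x}\mathcal{A}x + 2\alpha\tfrac{d}{dx}x$, and then to promote it to \eqref{KL alpha differential} by iteration together with the shift formula \eqref{prop KL alpha+n}. Both sides of the base identity are linear in $g$ and $\{x^{k}\}_{k\geqslant 0}$ is a basis of $\mathcal{P}$, so it suffices to verify it on monomials.

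First I would expand $\mathcal{B}$ explicitly. Using $\mathcal{A}=x^{2}\tfrac{d^{2}}{dx^{2}}+x\tfrac{d}{dx}-x$ a short computation gives $\mathcal{B}f = x^{2}f''+(2\alpha+3)xf'+(2\alpha+1)f-xf$, and hence on monomials
$$\mathcal{B}x^{k}=(k+1)(k+2\alpha+1)\,x^{k}-x^{k+1}, \qquad k\geqslant 0.$$
The decisive structural point, forced precisely by the correction term $2\alpha\tfrac{d}{dx}x$, is that the coefficient of $x^{k}$ factors as $(k+1)(k+2\alpha+1)$; in particular $\mathcal{B}$ raises degree by exactly one and maps $\mathcal{P}$ into $\mathcal{P}$, which is what legitimizes the later iteration.

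Next I evaluate both monomial contributions through \eqref{KL alpha xn}. Writing $A=\alpha+1+\tfrac{i\tau}{2}$ so that $KL_{\alpha}[x^{k}]=|(A)_{k}|^{2}$ and $KL_{\alpha}[x^{k+1}]=|(A)_{k}|^{2}\big[(\alpha+1+k)^{2}+\tfrac{\tau^{2}}{4}\big]$, linearity yields
$$KL_{\alpha}[\mathcal{B}x^{k}]=|(A)_{k}|^{2}\Big\{(k+1)(k+2\alpha+1)-(\alpha+1+k)^{2}-\tfrac{\tau^{2}}{4}\Big\}.$$
The heart of the argument is the algebraic collapse $(k+1)(k+2\alpha+1)-(\alpha+1+k)^{2}=-\alpha^{2}$, valid identically in $k$: the entire $k$-dependence cancels and the bracket reduces to the $k$-free constant $-(\tfrac{\tau^{2}}{4}+\alpha^{2})$. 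This is exactly the weighted transcription of the eigenfunction relation \eqref{A Kitau}, $\mathcal{A}K_{i\tau}(2\sqrt{x})=-(\tfrac{\tau}{2})^{2}K_{i\tau}(2\sqrt{x})$, and it establishes the base identity.

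Finally I iterate. Since $\mathcal{B}$ preserves $\mathcal{P}$, applying the base identity to $g=\mathcal{B}^{j}(x^{n}f)$ and inducting on $m$ gives $KL_{\alpha}[\mathcal{B}^{m}(x^{n}f)](\tau)=(-1)^{m}(\tfrac{\tau^{2}}{4}+\alpha^{2})^{m}\,KL_{\alpha}[x^{n}f](\tau)$; then \eqref{prop KL alpha+n} rewrites $KL_{\alpha}[x^{n}f]=|(\alpha+1+\tfrac{i\tau}{2})_{n}|^{2}KL_{\alpha+n}[f]$, producing \eqref{KL alpha differential} exactly. There is no analytic obstacle along this route --- the only thing to get right is the algebraic cancellation of the bracket. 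An alternative, closer to the eigenfunction remark preceding the lemma, would be to integrate by parts inside \eqref{KL alpha def}, exploiting that $\mathcal{A}$ is formally self-adjoint for the measure $dx/x$ (because $x\tfrac{d}{dx}$ is skew-adjoint there); the main difficulty of that variant is instead analytic, namely justifying the vanishing of the boundary terms at $0$ and $+\infty$ via \eqref{Knu at infty}--\eqref{K0 at 0} and tracking the weight shift, all of which the monomial computation avoids.
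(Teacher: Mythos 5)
Your proof is correct, but it takes a genuinely different route from the paper's. The paper proves the base case $m=1$ analytically: it writes $x^{\alpha}\bigl(\tfrac{1}{x}\mathcal{A}x+2\alpha\tfrac{d}{dx}x\bigr)f = \tfrac{1}{x}\mathcal{A}x\,[x^{\alpha}f]-\alpha^{2}x^{\alpha}f$, i.e.\ identifies your operator $\mathcal{B}$ as the conjugate of $\tfrac{1}{x}\mathcal{A}x$ by $x^{\alpha}$ shifted by $-\alpha^{2}$, and then integrates by parts inside \eqref{KL alpha def} using the formal self-adjointness of $\mathcal{A}$ for $dx/x$ together with the eigenfunction relation \eqref{A Kitau}; the induction on $m$ and the appeal to \eqref{prop KL alpha+n} are then the same as yours. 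You instead verify the base intertwining identity purely algebraically on the monomial basis via \eqref{KL alpha xn}, and your computations check out: $\mathcal{B}x^{k}=(k+1)(k+2\alpha+1)x^{k}-x^{k+1}$ and the cancellation $(k+1)(k+2\alpha+1)-(k+\alpha+1)^{2}=-\alpha^{2}$ are both correct. What your route buys is that it sidesteps entirely the justification of the boundary terms in the integration by parts (the paper states \eqref{int by parts} for compactly supported functions and applies it to $K_{i\tau}(2\sqrt{x})$ against $x^{\alpha+1}f$, which strictly speaking needs the decay estimates \eqref{Knu at infty}--\eqref{K0 at 0} and a restriction near the origin); since the lemma is only asserted for $f\in\mathcal{P}$ and \eqref{KL alpha xn} is already established, nothing is lost. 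What the paper's route buys is the conceptual explanation of \emph{why} the correction term $2\alpha\tfrac{d}{dx}x$ produces the eigenvalue $-(\tfrac{\tau^{2}}{4}+\alpha^{2})$ --- the conjugation identity --- which in your write-up appears only as an unexplained algebraic collapse; you partly recover this by remarking on the connection to \eqref{A Kitau} at the end.
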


\begin{proof} 
The following identity 
\begin{equation}\label{int by parts}
	\int_{0}^\infty \psi(x) \Big(\mathcal{A}\varphi(x)\Big) \frac{dx}{x}
	=	\int_{0}^\infty  \Big(\mathcal{A} \psi(x)\Big) \varphi(x) \frac{dx}{x},
\end{equation}
is valid whenever $\phi,\psi$ are two functions in $\mathcal{C}_{0}^{2}(\mathbb{R}_{+})$ vanishing at infinity and also near the origin together with their derivatives, in order to eliminate the outer terms. 
Bearing in mind 
$$
	x^{\alpha} \left(\frac{1}{x} \mathcal{A} x + 2\alpha \frac{d}{dx} x \right) f(x)
	= \frac{1}{x} \mathcal{A} x  \left[x^{\alpha} f(x)\right]  - \alpha^{2} x^{\alpha} f(x)
$$
then, relations \eqref{int by parts} and \eqref{A Kitau} ensure 
$$ 
	\int_{0}^\infty x^{\alpha}K_{i\tau}(2\sqrt{x}) \left( \left(\frac{1}{x} \mathcal{A} x + 2\alpha \frac{d}{dx} x \right) f(x) \right) dx 
	= - \left(  \frac{\tau^{2}}{4}+ \alpha^{2} \right)  
		 \int_{0}^\infty x^{\alpha}K_{i\tau}(2\sqrt{x})  f(x)  dx \ .
$$
The relation \eqref{KL alpha differential} now follows if we proceed by finite induction over $m$ and take into account \eqref{prop KL alpha+n}. 
\end{proof}

Inasmuch as 
$$
	\frac{1}{x} \mathcal{A} x + 2\alpha \frac{d}{dx} x 
	= x \left( \frac{d}{dx}x\frac{d}{dx} + 2(\alpha+1) \frac{d}{dx} -1 \right) + 2\alpha +1,
$$ 
the relation  \eqref{prop KL alpha+gamma} enables  
\begin{eqnarray*}
	KL_{\alpha}\left[ \left( \frac{1}{x} \mathcal{A} x + 2\alpha \frac{d}{dx} x \right) f(x)\right]
	&=& \left((\alpha+1)^{2}+\frac{\tau^{2}}{4}\right) KL_{\alpha+1} \left[ \left( \frac{d}{dx}x\frac{d}{dx} + 2(\alpha+1) \frac{d}{dx} -1 \right)  
			f(x)\right]\\
	&& + (2\alpha+1) KL_{\alpha} [f(x)](\tau)
\end{eqnarray*}
which, because of \eqref{KL alpha differential} with $(m,n)=(1,0)$ yields 
\begin{equation} \label{KL alpha to KL alpha +1} 
	- KL_{\alpha}[f(x)](\tau) 
	= KL_{\alpha+1} \left[ \left( \frac{d}{dx}x\frac{d}{dx} + 2(\alpha+1) \frac{d}{dx} -1 \right)  f(x)\right] (\tau) \ .  
\end{equation}
Hence, by induction, it follows 
\begin{equation} \label{KL alpha to KL alpha +1} 
	(-1)^{m} KL_{\alpha}[f(x)](\tau) 
	= KL_{\alpha+m} \left[ \prod_{\sigma=0}^{m}\left( \frac{d}{dx}x\frac{d}{dx} + 2(\alpha+\sigma+1) \frac{d}{dx} -1 \right)  f(x)\right] (\tau) \ , 
	\ m\in\mathbb{N}_{0}.
\end{equation}

\begin{remark} 
Mimicking the analysis taken in \cite{LouYak2012}, on the strength of relation \eqref{KL alpha differential}, the MPS $\{P_{n}\}_{n\geqslant0}$ defined by 
$$
	P_{n}(x;\alpha):=  (-1)^{n}\left( \frac{1}{x} \mathcal{A} x + 2\alpha \frac{d}{dx} x \right)^{n} 
				= (-1)^{n}  \Big(\left( \frac{d}{dx}x \frac{d}{dx} + 2\alpha \frac{d}{dx} -1 \right)x\Big)^{n}
				\ , \ n\geqslant 0,
$$
is such that $KL_{\alpha}[P_{n}(x;\alpha)]=\left(\frac{\tau^{2}}{4}+\alpha^{2}\right)^{n}$ for $n\in\mathbb{N}_{0}$. 

The polynomial basis $\left\{\left(\frac{\tau^{2}}{4}+\alpha^{2}\right)^{n}\right\}_{n\geqslant0}$ and the factorial basis $\{(\alpha+1-\frac{i\tau}{2})_{n}(\alpha+1+\frac{i\tau}{2})_{n}\}_{n\geqslant0}$ are bridged via 
the two sets of numbers $\{t_{n,\nu}(\alpha)\}_{0\leqslant \nu\leqslant n}$ and $\{T_{n,\nu}(\alpha)\}_{0\leqslant \nu\leqslant n}$:
\begin{equation}\label{CentralFact coeffs}
\begin{array}{ll}
	& \ds  \left(\frac{\tau^{2}}{4}+\alpha^{2}\right)^{n} 
	 = \sum_{\nu=0}^{n}T_{n,\nu}(\alpha) \left|\left(\alpha+1+\frac{i\tau}{2}\right)_{\nu}\right|^{2} \\
\text{ whereas }  & \\
	&\ds  \left|\left(\alpha+1+\frac{i\tau}{2}\right)_{n}\right|^{2}
	 = \sum_{\nu=0}^{n}t_{n,\nu}(\alpha) \left(\frac{\tau^{2}}{4}+\alpha^{2}\right)^{\nu}
	 \ , \ n\geqslant 0. 
	 \end{array}
\end{equation}
 These are essentially the non-centered central factorial numbers (or modified Stiring numbers) defined by the triangular recurrence relations 
$$
	\widehat{t}_{n,n}(\alpha)=1 
	\quad ; \quad  
	\widehat{t}_{n+1,\nu}(\alpha) 
		=\widehat{t}_{n,\nu-1}(\alpha)+ (2\alpha+n+1)(n+1) \ \widehat{t}_{n,\nu}(\alpha) 
		 \ , \ 0\leqslant \nu \leqslant n, 
$$
with $t_{n,-1}(\alpha)=0$, while 
$$
	\widehat{T}_{n,n}(\alpha)=1 
	\quad ; \quad  
	\widehat{T}_{n+1,\nu}(\alpha) 
		=  \widehat{T}_{n,\nu-1}(\alpha) - (2\alpha+\nu+1)(\nu+1) \ \widehat{T}_{n,\nu}(\alpha) 
		 \ , \ 0\leqslant \nu \leqslant n, 
$$
with $T_{n,-1}(\alpha)=0$. 

Recalling the expressions of the so-called $2\alpha$-modified Stirling numbers of first kind $\widehat{s}_{2\alpha}(n,\nu)$ and of second kind $\widehat{S}_{2\alpha}(n,\nu)$ treated in \cite{Loureiro2010}, we conclude that 
$$
	t_{n,\nu}(\alpha)=(-1)^{n+\nu}\widehat{s}_{2\alpha}(n+1,\nu+1)
	\quad \text{whereas}\quad 
	T_{n,\nu}(\alpha)=(-1)^{n+\nu}\widehat{S}_{2\alpha}(n+1,\nu+1)
	\ , \ 0\leqslant \nu \leqslant n .
$$
Therefore, on the grounds of \eqref{KL alpha differential}, necessarily, 
$$
	P_{n}(x;\alpha)= \sum_{\nu=0}^{n} (-1)^{n+\nu} \widehat{S}_{2\alpha}(n+1,\nu+1) x^{\nu} \ , \ n\geqslant 0.
$$

Moreover, from the relation \eqref{KL alpha differential} with $f(x)=1$, it follows
$$
	KL_{\alpha}\left[ \left( \frac{1}{x} \mathcal{A} x + 2\alpha \frac{d}{dx} x \right)^{m} x^{n}\right](\tau)
	= 	\sum_{\nu=0}^{n} (-1)^{m+n+\nu}\widehat{s}_{2\alpha}(n+1,\nu+1) (\alpha) 
		\left( \frac{\tau^{2}}{4} + \alpha^{2} \right)^{m+\nu} \ , \ n,m\geqslant 0,
$$
which, due to the linearity and injectivity of the operator $KL_{\alpha}$, provides 
$$
	\left( \frac{1}{x} \mathcal{A} x + 2\alpha \frac{d}{dx} x \right)^{m} x^{n}
	= \sum_{\nu=0}^{n} (-1)^{m+n+\nu}\widehat{s}_{2\alpha}(n+1,\nu+1) (\alpha) 
		P_{m+\nu}(x;\alpha) \ , \ n,m\geqslant 0.
$$
\end{remark}

\begin{remark} The $KL_{\alpha}$-transform can be extended to any continuous function $f \in L_{1}\left(\mathbb{R}, K_{0}(2\mu\sqrt{|x|}) dx \right)$, $0<\mu<1$, in a neighborhood of each $x\in\mathbb{R}$ where $f(x)$ has bounded variation in the following manner 
$$
	KL_{\alpha}[f(x)](\tau) = \frac{2\sinh(\pi {\tau}/2)}{\pi{\tau}}
		\int_{-\infty}^{+\infty} K_{2i {\tau}}(2\sqrt{|x|}) f(|x|) |x|^{\alpha} dx
$$
Consequently, the latter results can be extended to the vector space of polynomials $\mathcal{P}$ without any further restrictions over the domain.
\end{remark}

\section{Generalized hypergeometric-type polynomials. Illustrative examples. }\label{Sec: Hypergeom polys}

Despite its simplicity, equality \eqref{KL alpha xn} permits to induce structural relations known from one  MPSs to the other. 
This argument is substantiated by a few examples as we will soon see: when $\{B_{n}\}_{n\geqslant 0}$ is either the Hermite or the Laguerre polynomial sequences then its $KL_{\alpha}$-transform  is, respectively, a $4$ and $2$-orthogonal MOPS, whose structure is completely unraveled.

The $KL_{\alpha}$-transform of a MPS $\{P_{n}\}_{n\geqslant 0}$ of hypergeometric type defined by 
\begin{equation}\label{poly pFq}
	P_{n}(x) = (-1)^{n}\frac{\left(\prod_{\nu=1}^{q} (b_{\nu})_{n}\right)}
			{\left(\prod_{\nu=1}^{p} (a_{\nu})_{n}\right)}\ 
			{}_{p+1}F_{q}\left(\left.\begin{array}{c}{-n,a_{1},\ldots,a_{p}}\\
						{b_{1},\ldots,b_{q}}\end{array}\right|  x \right)
						\ , \ n\geqslant 0, 
\end{equation}
where the coefficients $a_{j},b_{k}$ with $j=1,\ldots,p$ and $k=1,\ldots ,q$, do not depend on $x$ but possibly depending on $n$, is again an hypergeometric polynomial type MPS, say $\{S_{n}\}_{n\geqslant 0}$
and is given by 
\begin{equation}\label{poly pFq image}
	S_{n}(\tau^{2}/4) = (-1)^{n}\frac{\left(\prod_{\nu=1}^{q} (b_{\nu})_{n}\right)}
			{\left(\prod_{\nu=1}^{p} (a_{\nu})_{n}\right)}\ 
			 {}_{p+3}F_{q}\left(\left.\begin{array}{c}{-n,a_{1},\ldots,a_{p},\alpha+1-\frac{i\tau}{2},\alpha+1+\frac{i\tau}{2}}\\
						{b_{1},\ldots,b_{q}}\end{array}\right|  1 \right)
						\ , \ n\geqslant 0.
\end{equation}

The description of all the hypergeometric orthogonal polynomials of this type hierarchized in the Askey table,  in particular those of the form \eqref{poly pFq}. For instance, a table describing all the $d$-orthogonal polynomial sequences of hypergeometric type have been set up in  \cite{BenCheikhTabledOrtho}, the $d$-orthogonal sequences that will soon arise are presented therein. 

While on  \S\ref{subsec: Appell}  the $KL_{\alpha}$-image of any Appell $d$-orthogonal sequence is descried (the Hermite polynomials included), on \S\ref{subsect: Rev Appell} upon the description of all the $d$-orthogonal sequences whose reversed sequence is of Appell, the Continuous Dual Hahn polynomials will arise as the $KL_{\alpha}$-image of a $2$-orthogonal polynomial sequence associated to the Bateman's function studied in \cite{BCheikhDouak2000}.

\subsection{The Appell polynomials} \label{subsec: Appell}

A MPS $\{P_{n}\}_{n\geqslant 0}$ is said to be an Appell MPS, whenever \cite{Appell}
\begin{equation}\label{Appell MPS}
	\frac{d}{dx} P_{n+1}(x) = (n+1)P_{n}(x) \ , \ n\in\mathbb{N}_{0}. 
\end{equation}

As described in \cite{DouakAppell}, the unique polynomial sequences that are simultaneously d-orthogonal, d-symmetric and of Appell are the $d$-OPS of type Hermite, $\{\widehat{H}_{n}\}_{n\geqslant0}$ fulfilling the recursive relation of order $(d+1)$ 
\begin{equation}
	\begin{array}{l}
	\ds \widehat{H}_{n+d+1}(x;d) = x \widehat{H}_{n+d}(x;d)
			- (d+1)^{-1} \binom{n+d}{d} \widehat{H}_{n}(x;d)\\
	\ds \widehat{H}_{n}(x;d) = x^{n} \ , \ n=0,1,\ldots, d. 
	\end{array}
\end{equation}
Regarding the $KL_{\alpha}$-transformed sequence of $\{{P}_{n}\}_{n\geqslant0}$, we have: 

\begin{lemma} The polynomial sequence $\{{S}_{n}\}_{n\geqslant0}$  corresponding to the $KL_{\alpha}$-transform of the symmetric $d$-orthogonal Appell polynomials $\{\widehat{H}_{n}\}_{n\geqslant0}$ is a $(2d+2)$-orthogonal polynomial sequence fulfilling 
\begin{eqnarray*}
		{S}_{n+1}\left(\frac{\tau^{2}}{4};\alpha\right) 
	&=&  \left( \frac{\tau^{2}}{4} + \alpha^{2} + (n+1)(n+1+2\alpha) \right)S_{n}\left(\frac{\tau^{2}}{4};\alpha\right) 
		-(d+1)^{-1} \binom{n}{d}  {S}_{n-d}\left(\frac{\tau^{2}}{4};\alpha\right) \\
	&&	+ (2n+2\alpha+1-d)\binom{n}{d+1} 
		{S}_{n-d-1}\left(\frac{\tau^{2}}{4};\alpha\right) \\
	&&
		+ n(n-1) (d+1)^{-1}  \binom{n-d-2}{d}   {S}_{n-2d-2} \left(\frac{\tau^{2}}{4};\alpha\right) \\
	S_{n}\left(\frac{\tau^{2}}{4};\alpha\right) &=& \left((\alpha+1)-\frac{i\tau}{2}\right)_{n}\left((\alpha+1)+\frac{i\tau}{2}\right)_{n}
		\ , \ n=0,1,\ldots, 2d+1.
\end{eqnarray*}
Moreover, 
\begin{equation}\label{Sn Hermite d delta}
	\delta_{i} S_{n}\left(\tfrac{\tau^{2}}{4};\alpha\right)= n S_{n-1}\left(\tfrac{\tau^{2}}{4};\alpha+\tfrac{1}{2}\right)
	\ , \ n\in\mathbb{N}_{0},
\end{equation}
where $\delta_{w}$ represents the operator defined  in \eqref{delta w}. 
\end{lemma}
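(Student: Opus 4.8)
The plan is to transport the recurrence relation satisfied by the Appell-Hermite $d$-orthogonal sequence $\{\widehat{H}_n\}$ through the $KL_\alpha$-operator, using the fact that $KL_\alpha$ is a linear isomorphism on $\mathcal{P}$ that converts multiplication by $x$ and differentiation into explicit operations on the factorial basis. First I would apply $KL_\alpha$ to the defining relation
$$
	\widehat{H}_{n+d+1}(x;d) = x\,\widehat{H}_{n+d}(x;d) - (d+1)^{-1}\binom{n+d}{d}\widehat{H}_n(x;d),
$$
setting $S_n(\tau^2/4;\alpha):=KL_\alpha[\widehat{H}_n](\tau)$. The constant-coefficient term transports trivially by linearity, so the entire burden is computing $KL_\alpha[x\,\widehat{H}_{n+d}]$. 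By the Appell property \eqref{Appell MPS} one may write $\widehat{H}_{n+d}$ as essentially $(n+d+1)^{-1}\frac{d}{dx}\widehat{H}_{n+d+1}$, but the cleaner route is to express multiplication by $x$ directly: from \eqref{prop KL alpha+n} with the key Lemma \ref{lem:KL alpha differential}, one has an explicit formula for $KL_\alpha[x^k f]$ in terms of $KL_{\alpha+k}[f]$ and the factorial $\bigl|(\alpha+1+\tfrac{i\tau}{2})_k\bigr|^2$.

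The main computational step is to re-express the index shift $\alpha \mapsto \alpha+k$ appearing in these relations back at level $\alpha$, so that the resulting recurrence closes within the single family $\{S_n(\cdot\,;\alpha)\}$. Here the identity \eqref{delta i over factorials} for the action of $\delta_i$ on central factorials is the right tool, together with the observation that multiplication by $x$ on the polynomial side corresponds, under $KL_\alpha$, to an operator on the $\tau$-side that raises the factorial index by one and simultaneously multiplies by $\bigl((\alpha+1)^2+\tfrac{\tau^2}{4}\bigr)$-type factors. Carrying out this algebra should produce the prefactor $\bigl(\tfrac{\tau^2}{4}+\alpha^2+(n+1)(n+1+2\alpha)\bigr)$ on $S_n$, the term proportional to $S_{n-d}$ from the original recurrence, and two extra terms in $S_{n-d-1}$ and $S_{n-2d-2}$ that arise precisely because multiplication by $x$ on the image side is a \emph{second-order} operation in the factorial basis (it both raises and lowers the index). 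I expect the explicit binomial coefficients $\binom{n}{d+1}$ and $\binom{n-d-2}{d}$ together with the linear factor $(2n+2\alpha+1-d)$ to emerge from reindexing these contributions; the verification of these precise coefficients is the bookkeeping heart of the proof. That $\{S_n\}$ is then $(2d+2)$-orthogonal follows because a MPS satisfying a genuine $(2d+3)$-term recurrence of the form \eqref{dOPS rec rel} (with leading lower term $S_{n-2d-2}$ nonzero) is $2d+2$-orthogonal, so it suffices to check that the coefficient of $S_{n-2d-2}$ does not vanish, i.e.\ that $n(n-1)(d+1)^{-1}\binom{n-d-2}{d}\neq 0$ for $n$ large enough.

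The final display \eqref{Sn Hermite d delta} is the easiest part and I would prove it independently: applying the intertwining relation \eqref{KL alpha + 1/2}, namely $KL_{\alpha+1/2}[\tfrac{d}{dx}f]=\delta_i\bigl(KL_\alpha[f]\bigr)$, to $f=\widehat{H}_n$ and invoking the Appell property $\tfrac{d}{dx}\widehat{H}_n = n\,\widehat{H}_{n-1}$ gives
$$
	\delta_i S_n\!\left(\tfrac{\tau^2}{4};\alpha\right) = \delta_i\bigl(KL_\alpha[\widehat{H}_n]\bigr) = KL_{\alpha+1/2}\!\left[\tfrac{d}{dx}\widehat{H}_n\right] = n\,KL_{\alpha+1/2}[\widehat{H}_{n-1}] = n\,S_{n-1}\!\left(\tfrac{\tau^2}{4};\alpha+\tfrac12\right),
$$
which is exactly \eqref{Sn Hermite d delta}. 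The one genuine obstacle is the middle paragraph: keeping the index shifts in $\alpha$ under control while multiplication by $x$ repeatedly changes the base parameter, and making sure every reindexing produces integer shifts in $n$ that line up correctly. I would handle this by first proving the auxiliary identity for $KL_\alpha[x\,x^m]$ in closed form on the factorial basis (via \eqref{delta i over factorials} and \eqref{KL alpha xn}), then extending to arbitrary $f\in\mathcal{P}$ by linearity since $\{x^m\}_{m\geqslant 0}$ is a basis of $\mathcal{P}$, and finally specializing to $f=\widehat{H}_{n+d}$.
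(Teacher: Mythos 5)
Your plan is workable and, once carried through, rests on exactly the same identity as the paper's proof --- but the two routes you sketch for the key step are not equally viable, and only the second one (in your last paragraph) actually closes. The paper does not transport the $(d+1)$-order recurrence for $\{\widehat{H}_n\}$ forward. Instead it applies the operator $\tfrac{1}{x}\mathcal{A}x+2\alpha\tfrac{d}{dx}x$ to $\widehat{H}_n$, uses the Appell property \eqref{Appell MPS} to get $\tfrac{1}{x}\mathcal{A}x\,\widehat{H}_n=n(n-1)x^2\widehat{H}_{n-2}+3nx\widehat{H}_{n-1}+(1-x)\widehat{H}_n$, reduces the terms $x\widehat{H}_{n}$, $x\widehat{H}_{n-1}$, $x^2\widehat{H}_{n-2}$ by the recurrence, and then applies $KL_\alpha$ via Lemma \ref{lem:KL alpha differential} with $(m,n)=(1,0)$, which converts the left-hand side into $-(\tfrac{\tau^2}{4}+\alpha^2)S_n$; the stated $(2d+3)$-term relation and hence the $(2d+2)$-orthogonality drop out. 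Your first suggestion --- handle $KL_\alpha[x\widehat{H}_{n+d}]$ by \eqref{prop KL alpha+n} and then ``undo'' the shift $\alpha\mapsto\alpha+1$ --- is a dead end: \eqref{prop KL alpha+n} produces $S_{n+d}(\cdot\,;\alpha+1)$, and the $\delta_i$ identities only give back the tautology $KL_{\alpha+1}[\widehat{H}_m]=S_m(\cdot\,;\alpha+1)$; there is no connection formula to the family $\{S_k(\cdot\,;\alpha)\}$ short of the operator identity itself. Your second suggestion is the right one and is equivalent to the paper's: from \eqref{KL alpha xn}, $KL_\alpha[x\cdot x^m]=\left(\tfrac{\tau^2}{4}+(\alpha+m+1)^2\right)KL_\alpha[x^m]$, so by linearity $\tfrac{\tau^2}{4}\,KL_\alpha[f]=KL_\alpha\!\left[\left(x-\left(x\tfrac{d}{dx}+\alpha+1\right)^2\right)f\right]$, and one checks $x-\left(x\tfrac{d}{dx}+\alpha+1\right)^2=-\left(\tfrac{1}{x}\mathcal{A}x+2\alpha\tfrac{d}{dx}x\right)-\alpha^2$, i.e.\ you have rederived \eqref{KL alpha differential}. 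What remains in either formulation is the same bookkeeping (Appell property plus the recurrence applied to $x\widehat{H}_{n-1}$ and $x^2\widehat{H}_{n-2}$) that yields the coefficients $(2n+2\alpha+1-d)\binom{n}{d+1}$ and $n(n-1)(d+1)^{-1}\binom{n-d-2}{d}$, which you leave unverified. Your derivation of \eqref{Sn Hermite d delta} from \eqref{KL alpha + 1/2} and the Appell property is exactly the paper's.
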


\begin{proof} The Appell character of a MPS $\{P_{n}\}_{n\geqslant 0}$ implies, in the light of \eqref{Appell MPS}, that 
$$
	\frac{1}{x}\mathcal{A}x P_{n}(x) = n(n-1)x^{2} P_{n-2}(x) + 3 n x P_{n-1}(x) + (1-x)P_{n}(x) 
	\ , \ n\in\mathbb{N}_{0}. 
$$
If, in addition, $\{P_{n}\}_{n\geqslant 0}$ is a $d$-symmetric $d$-MOPS, then $\{P_{n}\}_{n\geqslant 0}$ coincides with $\{\widehat{H}_{n}\}_{n\geqslant0}$ and, in this case, we shall have 
\begin{eqnarray*}
	\left( \frac{1}{x}\mathcal{A}x + 2\alpha \frac{d}{dx}x \right) \widehat{H}_{n}
	&=&  -\widehat{H}_{n+1} + (n+1)(n+1+2\alpha)\widehat{H}_{n} 
		-(d+1)^{-1} \binom{n}{d}  \widehat{H}_{n-d}
		\\
	&&	+ (d+1)^{-1} n \left[(n-1)  \binom{n-2}{d}+ (n+2+2\alpha) \binom{n-1}{d} \right] 
		\widehat{H}_{n-d-1} \\
	&& 	+ n(n-1) (d+1)^{-1}  \binom{n-d-2}{d}   \widehat{H}_{n-2d-2} 
\end{eqnarray*}
{\it i.e.}, 
\begin{eqnarray*}
	\left( \frac{1}{x}\mathcal{A}x + 2\alpha \frac{d}{dx}x \right) \widehat{H}_{n}
	&=&  -\widehat{H}_{n+1} + (n+1)(n+1+2\alpha)\widehat{H}_{n} 
		-(d+1)^{-1} \binom{n}{d}  \widehat{H}_{n-d}
		\\
	&&	+ (2n+2\alpha+1-d)\binom{n}{d+1}  
		\widehat{H}_{n-d-1}  + n(n-1) (d+1)^{-1}  \binom{n-d-2}{d}   \widehat{H}_{n-2d-2} 
		\ , \ n\geqslant 0. 
\end{eqnarray*}

Operating with the $KL_{\alpha}$-transform on both sides of the latter equation and then invoking the relation \eqref{KL alpha differential} with $(m,n)=(1,0)$, we have 
\begin{eqnarray*}
	-\left( \frac{\tau^{2}}{4} + \alpha^{2}\right)S_{n}\left(\frac{\tau^{2}}{4}\right)
	&=&  -{S}_{n+1}\left(\frac{\tau^{2}}{4}\right) + (n+1)(n+1+2\alpha) {S}_{n}\left(\frac{\tau^{2}}{4}\right) 
		-(d+1)^{-1} \binom{n}{d}  {S}_{n-d}\left(\frac{\tau^{2}}{4}\right) \\
	&&	+ (2n+2\alpha+1-d)\binom{n}{d+1} 
		{S}_{n-d-1}\left(\frac{\tau^{2}}{4}\right) \\
	&& 	+ n(n-1) (d+1)^{-1}  \binom{n-d-2}{d}   {S}_{n-2d-2} \left(\frac{\tau^{2}}{4}\right)
	\ , \ n\geqslant0, 
\end{eqnarray*}
which proves the  $(2d+2)$-orthogonality of $\{{S}_{n}\}_{n\geqslant0}$. 

Finally, by virtue of the Appell character of $\{\widehat{H}_{n}\}_{n\geqslant0}$, the relation \eqref{KL alpha + 1/2} implies \eqref{Sn Hermite d delta}. 
\end{proof}

In the light of \eqref{Sn Hermite d delta}, we readily observe the $d$-orthogonality of the latter sequence $\{{S}_{n}\}_{n\geqslant0}$ is preserved under the action of the lowering operator $\delta_{i}$: it is therefore $\delta_{i}$-classical, in the Hahn's sense.

\subsection{Reversed Appell sequences} \label{subsect: Rev Appell}

When $\{P_{n}\}_{n\geqslant0}$   represents an Appell MPS such that $P_{n}(0)\neq0$ for all $n\in\mathbb{N}_{0}$, we may define another MPS $\{R_{n}\}_{n\geqslant0}$ through 
$$
	R_{n}(x) = \frac{1}{\lambda_{n}} x^n P_{n}\left(\frac{1}{x}\right) \ , \ n\in\mathbb{N}_{0},
$$
with $\lambda_{n}=P_{n}(0)$. We will refer to such MPS $\{R_{n}\}_{n\geqslant0}$ as {\it reversed Appell}. 

The Appell character of $\{P_{n}\}_{n\geqslant0}$ 
induces the differential relation for the reversed polynomials $\{R_{n}\}_{n\geqslant0}$   
\begin{equation}\label{diff rel Reversed Appell}
	 \frac{d}{dx} x R_{n+1}(x) = (n+2) R_{n+1}(x) - (n+1) \frac{\lambda_{n}}{\lambda_{n+1}} R_{n}(x) 
	 \ ,  \ n\in\mathbb{N}_{0}.
\end{equation}

Since the work of Toscano \cite{Toscano1956p2} the Laguerre polynomials are known to be the unique OPS such that the reversed polynomial sequence is an Appell. Later on, Cheikh and Douak  \cite[Th.1.2]{BCheikhDouak2001p2} broadened the result to the $d$-orthogonal case: they showed that the unique $d$-MOPS, say $\{R_{n}\}_{n\geqslant0}$, whose reversed sequence  $\{P_{n}\}_{n\geqslant0}$  is of Appell,  is, up to a linear change of variable, given by 
$$
	R_{n}(x):=R_{n}(x;\alpha_{1},\ldots,\alpha_{d})
	=\frac{1}{\lambda_{n}} \ 
	\mathop{{}_{1}F_{d}}\left( {-n \atop \alpha_{1}+1,\ldots,\alpha_{d}+1 } ; x\right)
	\ , \ n\geqslant 0, 
$$
with $-\alpha_{j}\notin \mathbb{N} $ and 
\begin{equation}\label{lambda n Laguerre}
	\lambda_{n}=(-1)^{n}  \left(\prod_{\sigma=1}^{d} (\alpha_{\sigma}+1)_{n}\right)^{-1}
	\ , \ n\geqslant 0. 
\end{equation}
Therefore, the corresponding $KL_{\alpha} $-transform $\{S_{n}\}_{n\geqslant0}$ is given by 
\begin{equation}\label{KL alpha Rev Appell}
	S_{n}(x; d,\alpha,\alpha_{1},\ldots , \alpha_{d})
	=  \frac{(-1)^{n}}{\lambda_{n}} \left(\prod_{\sigma=1}^{d} (\alpha_{\sigma}+1)_{n}\right)
	\mathop{{}_{3}F_{d}}\left( {-n , \alpha+1-\frac{i\tau}{2},\alpha+1+\frac{i\tau}{2}\atop \alpha_{1}+1,\ldots,\alpha_{d}+1 } ; 1\right)
	\ , \ n\geqslant 0. 
\end{equation}

A simple inspection shows that, in particular, the case where $d=2$, gives rise to the (monic) {\it Continuous Dual Hahn polynomials}, which form actually a MOPS. 

The recurrence coefficients of the $d$-MOPS 
\begin{equation}
	\begin{array}{lcl}
	\ds R_{n+2}\left(x\right)  
		&=& \ds\left( x -  {\beta}_{n+1} \right)R_{n+1}\left(x\right) 
		-  \sum_{\nu=0}^{d-1} {\gamma}_{n-\nu+1}^{d-1-\nu}R_{n-\nu}\left(x\right)
	\end{array}
\end{equation}
are known - see \cite{BCheikhDouak2001p2}. In particular the case where $d=1$, we recover the classical Laguerre polynomials 
\begin{equation}\label{rec coef Laguerre}
\left\{	\begin{array}{ccl}
	\beta_{n}(1;\alpha_{1}) 	&=& 	2n+\alpha_{1} +1\ , \ n\geqslant 0, 	\\
	\gamma^{0}_{n+1}(1;\alpha_{1})		&=&	(n+1)(n+\alpha_{1}+1) 
		\ , \ n\geqslant 0, \\
	\gamma^{d-1-\nu}_{n-\nu+1} (1;\alpha_{1}) &=& 0  \ , \ n\geqslant 0, 
\end{array}\right. 
\end{equation}
while the case where $d=2$ corresponds to the sequence characterized in \cite{BCheikhDouak2000} associated to the Bateman's function, whose recurrence coefficients are: 
\begin{equation}\label{rec coef CDH}
	\left\{\begin{array}{l}
	 \beta_{n} := \beta_{n}(\alpha_{1}+1,\alpha_{2}+1) = 3n^2 + (2\alpha_{1}+2\alpha_{2}+3)n + (\alpha_{1}+1)	
			(\alpha_{2}+1) \ , \ n\geqslant 0,  \\
	 \gamma_{n}^1 :=\gamma_{n}^1(\alpha_{1},\alpha_{2}) = n(3n+\alpha_{1}+\alpha_{2})
			(n+\alpha_{1})	
			(n+\alpha_{2}) \ , \ \ n\geqslant 0, \\
	\gamma_{n}^0 :=\gamma_{n}^0(\alpha_{1},\alpha_{2}) = n(n+1)(n+\alpha_{1}+1)(n+\alpha_{1})	
			(n+\alpha_{2}+1)(n+\alpha_{2}) \ , \ \ n\geqslant 0. 
	\end{array}
	\right. 
\end{equation}

\begin{lemma} If  $\{R_{n}\}_{n\geqslant0}$ is a reversed Appell $d$-MOPS, then corresponding $KL_{\alpha}$-transformed sequence  $\{S_{n}\}_{n\geqslant0}$  given by \eqref{KL alpha Rev Appell} is a $\widetilde{d}$-MOPS, with 
$$\widetilde{d}
 	= \left\{\begin{array}{lcl}
	2 	&,& d=1\\
	1 	&,& d=2\\
	d 	&,&    d= 3,4,5,\ldots \ 
	\end{array}\right. , 
$$
fulfilling the recurrence relation 
\begin{equation}\label{Laguerre Sn d-ortho}
\begin{array}{lcl}
	\ds S_{n+2}\left(\tfrac{\tau^2}{4}\right)  
		&=& \ds\left( \frac{\tau^2}{4} -  \widetilde{\beta}_{n+1} \right)S_{n+1}\left(\tfrac{\tau^2}{4}\right) 
		-  \sum_{\nu=0}^{d-1} \widetilde{\gamma}_{n-\nu+1}^{d-1-\nu}S_{n-\nu}\left(\tfrac{\tau^2}{4}\right)
	\end{array}
\end{equation}
where 
\begin{eqnarray*}
		\widetilde{\beta}_{n+1} 
			&=&\beta_{n+1} -  (n+2+\alpha)^2  \\
		 \widetilde{\gamma}_{n+1}^{d-1-\nu}
		 	&=&  \gamma_{n+1}^{d-1}   
			-  (n+1)  ( 2n+3+2\alpha)\prod_{\sigma=1}^{d} \left(n +1+ \alpha_{\sigma} \right) 
			\\
		 \widetilde{\gamma}_{n}^{d-1-\nu}
		 	&=&  \gamma_{n}^{d-2} 
			- n(n+1)  \prod_{\sigma=1}^{d} \left(n+1 
			+ \alpha_{\sigma} \right) \left(n + \alpha_{\sigma} \right)  \\
		 \widetilde{\gamma}_{n-\nu+1}^{d-1-\nu}
		 	&=& \gamma_{n-\nu+1}^{d-1-\nu} \ , \ \nu=2,3,\ldots , d-1
\end{eqnarray*}
with $-\alpha_{j}\notin \mathbb{N} $.
Moreover  $\{\widehat{S}_{n}(\cdot;a,a_{1},\ldots,a_{d})\}_{n\geqslant0}$, with 
$\widehat{S}_{n}(\tau^{2}/4;a; a_{1},\ldots,a_{d}) := S_{n} (\tau^{2}/4; a-1, a+a_{1}-1,\ldots, a+a_{d}-1)$ fulfills 
\begin{equation}\label{delta Sn Laguerre}
	\delta_{i} \widehat{S}_{n+1}(\tfrac{\tau^2}{4};a,a_{1},\ldots,a_{d})
	= n \ \widehat{S}_{n}(\tfrac{\tau^2}{4};a+\tfrac{1}{2},a_{1}+\tfrac{1}{2},\ldots,a_{d}+\tfrac{1}{2})
	\ , \ n\in \mathbb{N}_{0},
\end{equation}
where $\delta_{w}$ represents the operator defined  in \eqref{delta w}. 
\end{lemma}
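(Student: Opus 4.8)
The plan is to push the structure relation of the reversed Appell $d$-MOPS $\{R_n\}_{n\geq0}$ through $KL_\alpha$, mimicking the Appell computation, the only new input being the first-order relation \eqref{diff rel Reversed Appell} in place of \eqref{Appell MPS}. First I would rewrite \eqref{diff rel Reversed Appell} using $\frac{d}{dx}(xR_{n+1})=R_{n+1}+xR_{n+1}'$ to extract the cleaner identity
$$x R_{n+1}'(x)=(n+1)\Big(R_{n+1}(x)-\tfrac{\lambda_n}{\lambda_{n+1}}\,R_n(x)\Big),$$
and note that \eqref{lambda n Laguerre} makes the ratios explicit, $\tfrac{\lambda_n}{\lambda_{n+1}}=-\prod_{\sigma=1}^d(n+1+\alpha_\sigma)$. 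Differentiating once more and substituting this identity back into itself (with $n\mapsto n-1$) expresses $x\,\frac{d}{dx}x\frac{d}{dx}R_{n+1}$ as an explicit combination of $R_{n+1},R_n,R_{n-1}$.

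Next, using the operator identity $\frac1x\mathcal{A}x+2\alpha\frac{d}{dx}x = x\big(\frac{d}{dx}x\frac{d}{dx}+2(\alpha+1)\frac{d}{dx}-1\big)+(2\alpha+1)$ recorded before \eqref{KL alpha to KL alpha +1}, and the $(d+1)$-term recurrence \eqref{dOPS rec rel} to rewrite the remaining $xR_{n+1}$, I would assemble
$$\Big(\tfrac1x\mathcal{A}x+2\alpha\tfrac{d}{dx}x\Big)R_{n+1}=-R_{n+2}+\sum_{k}c_{n,k}\,R_{k}$$
as a finite linear combination in which the differential part reaches down to $R_{n-1}$ and the multiplication-by-$x$ part reaches down to $R_{n+1-d}$. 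Applying $KL_\alpha$ and invoking Lemma \ref{lem:KL alpha differential} with $(m,n)=(1,0)$, the left-hand side becomes $-\big(\frac{\tau^2}{4}+\alpha^2\big)S_{n+1}$, while the right-hand side maps termwise to the same combination of the $S_k$; isolating $S_{n+2}$ and absorbing $\alpha^2$ into the $S_{n+1}$-coefficient (so that $(n+\alpha+2)^2-\alpha^2+\alpha^2=(n+\alpha+2)^2$ emerges) reproduces \eqref{Laguerre Sn d-ortho} together with the stated $\widetilde\beta_{n+1}$ and $\widetilde\gamma$, once the $\lambda$-ratios are replaced by the product above.

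The determination of $\widetilde d$ is the crux and depends on the competing reach of the two operators. For $d\geq3$ the deepest surviving term comes solely from $xR_{n+1}$, so its coefficient is the original $\gamma^0\neq0$ and, by the Favard-type converse for \eqref{dOPS rec rel} in \cite{MaroniDortho,Iseghem}, $\{S_n\}$ is genuinely $d$-orthogonal, i.e. $\widetilde d=d$. For $d=2$ both operators reach $R_{n-1}$, and the resulting coefficient of $S_{n-1}$ is $\gamma_n^0-n(n+1)\prod_{\sigma}(n+1+\alpha_\sigma)(n+\alpha_\sigma)$, which I expect to vanish identically once \eqref{rec coef CDH} is substituted; the recurrence then collapses to three terms and $\widetilde d=1$ — this is precisely the Continuous Dual Hahn case. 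For $d=1$ there is no lower coefficient in \eqref{dOPS rec rel} to cancel the differential contribution $n(n+1)(n+1+\alpha_1)(n+\alpha_1)\neq0$ to $S_{n-1}$, so an extra term survives and $\widetilde d=2$. Checking these vanishing/nonvanishing statements against \eqref{rec coef CDH} is the main obstacle, as it is exactly what discriminates between the three regimes.

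Finally, \eqref{delta Sn Laguerre} would follow from \eqref{KL alpha + 1/2} applied to $R_{n+1}$: the hypergeometric form of $R_n$ gives the parameter-shifting derivative $\frac{d}{dx}R_{n+1}(\cdot;\alpha_1,\dots,\alpha_d)=(n+1)R_n(\cdot;\alpha_1+1,\dots,\alpha_d+1)$, so that $\delta_i S_{n+1}=KL_{\alpha+1/2}[R_{n+1}']$ becomes $(n+1)S_n$ at the shifted parameters (in agreement with the Appell identity \eqref{Sn Hermite d delta} after re-indexing $n\mapsto n+1$); rewriting these shifts through the definition of $\widehat S_n$ then yields the claimed lowering relation.
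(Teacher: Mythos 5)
Your proposal is correct and follows essentially the same route as the paper: compute $\left(\tfrac{1}{x}\mathcal{A}x+2\alpha\tfrac{d}{dx}x\right)R_{n+1}$ from the reversed-Appell relation \eqref{diff rel Reversed Appell}, apply $KL_{\alpha}$ via Lemma \ref{lem:KL alpha differential} with $(m,n)=(1,0)$, replace $xR_{n+1}$ through the $(d+1)$-term recurrence, and settle $\widetilde d$ by the cancellation (for $d=2$, using \eqref{rec coef CDH}) or survival (for $d=1$) of the $S_{n-1}$ coefficient, with \eqref{delta Sn Laguerre} coming from \eqref{KL alpha + 1/2} and the parameter-shifting derivative of $R_{n+1}$. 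The cancellation you "expect" for $d=2$ does hold identically, since $\gamma_n^0=n(n+1)\prod_{\sigma=1}^{2}(n+1+\alpha_\sigma)(n+\alpha_\sigma)$ by \eqref{rec coef CDH}, so no gap remains.
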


\begin{proof} By virtue of \eqref{diff rel Reversed Appell}, it follows  
\begin{equation*}
	\begin{array}{@{}ll}
	 \ds\left( \frac{1}{x} \mathcal{A} x + 2\alpha \frac{d}{dx}x \right) R_{n+1}(x) 
	 &= \ds 
	 - xR_{n+1}(x)+ (n+2)(n+2+2\alpha) R_{n+1}(x) \\
	&\ds - (n+1) \frac{\lambda_{n}}{\lambda_{n+1}} ( 2n+3+2\alpha) R_{n}(x) 
		+ n(n+1) \frac{\lambda_{n-1}}{\lambda_{n+1}} R_{n-1}(x)
	\end{array}
\end{equation*}
under the convention $R_{-n}(x)=0$ for $ n\in\mathbb{N}$.
Thus, on the grounds of relation \eqref{KL alpha differential} with  $(m,n)=(1,0)$ on Lemma \ref{lem:KL alpha differential}, the action of the linear operator $KL_{\alpha}$ on both sides of the latter equality  leads to 
$$
		\begin{array}{@{}l}
	 \ds - \left(\frac{\tau^2}{4}+ \alpha^2 \right) KL_{\alpha}[R_{n+1}(x)](\tau) \\
	 = \ds  KL_{\alpha}[- xR_{n+1}(x)](\tau)
	+ (n+2)(n+2+2\alpha) KL_{\alpha}[R_{n+1}(x)](\tau) \\
	\ds - (n+1) \frac{\lambda_{n}}{\lambda_{n+1}} ( 2n+3+2\alpha)KL_{\alpha}[ R_{n}(x) ](\tau)
		+ n(n+1) \frac{\lambda_{n-1}}{\lambda_{n+1}} KL_{\alpha}[R_{n-1}(x)](\tau)
	\end{array}
$$
By virtue of the $d$-orthogonality of $\{R_{n}\}_{n\geqslant0}$ and the  injectivity of the $KL_{\alpha}$-transform, it follows that 
$$
		\begin{array}{@{}l}
	 \ds -\left(\frac{\tau^2}{4}+ \alpha^2 \right) S_{n+1}(\tfrac{\tau^2}{4}) \\
	 = \ds  - \left( S_{n+2}(\tfrac{\tau^2}{4})+ \beta_{n+1} S_{n+1}(\tfrac{\tau^2}{4})
	+  \sum_{\nu=0}^{d-1} \gamma_{n-\nu+1}^{d-1-\nu} S_{n-\nu}(\tfrac{\tau^2}{4})\right)\\
	\ds + (n+2)(n+2+2\alpha) S_{n+1}(\tfrac{\tau^2}{4}) 
	\ds - (n+1) \frac{\lambda_{n}}{\lambda_{n+1}} ( 2n+3+2\alpha)S_{n}(\tfrac{\tau^2}{4})
		+ n(n+1) \frac{\lambda_{n-1}}{\lambda_{n+1}} S_{n-1}(\tfrac{\tau^2}{4})
	\end{array}
$$
which, on account of the expression of $\lambda_{n}$ given in \eqref{lambda n Laguerre}, corresponds to \eqref{Laguerre Sn d-ortho}.

When $d=1$ $\{R_{n}\}_{n\geqslant 0}$ is a MOPS whose recurrence coefficients are given in \eqref{rec coef Laguerre} which, according to \eqref{rec coef CDH} compels  $\{S_{n}\}_{n\geqslant 0}$ to fulfill 
\begin{align} \
	\begin{array}{@{}l}
		S_{n+2}(z;\alpha) 	= (z- (-2 \alpha +\alpha _1-(\alpha +n)^2))S_{n+1}(z;\alpha)
				\vspace{0.2cm}\\
				\qquad 2 (n+1) (\alpha +n+1) \left(\alpha _1+n+1\right)S_{n}(z;\alpha) 
				+n (n+1) \left(\alpha _1+n\right) \left(\alpha _1+n+1\right)S_{n-1}(z;\alpha) \ , 
	\end{array}
\end{align}
ensuring the fact that $\{S_{n}\}_{n\geqslant0}$ is a $2$-orthogonal sequence. 

In turn, the case where $d=2$, the 2-MOPS $\{R_{n}\}_{n\geqslant0}$ fulfills the third order recursive relation whose recurrence coefficients are those given in \eqref{rec coef CDH}. In this case, $\{S_{n}\}_{n\geqslant0}$ is an ($1$-)orthogonal sequence fulfilling 
\begin{equation}\begin{array}{@{}l@{}c@{}l}
	S_{n+2}(z;\alpha) 	&=& \Big(z- (-\alpha  (\alpha +4)+2 n^2+(5-2 \alpha ) n+\alpha _2 (2
   n+3)+\alpha _1 \left(\alpha _2+2 n+3\right)+3)\Big)S_{n+1}(z;\alpha)\\
		&&			-(n+1) \left(\alpha _1+n+1\right) \left(\alpha _2+n+1\right)
   \left(-2 \alpha +\alpha _1+\alpha _2+n\right)	S_{n}(z;\alpha) 
		\end{array}
\end{equation}
In this case, $\{S_{n}:=S_{n}(\cdot;2; a-1,a+b-1,a+c-1)\}_{n\geqslant0}$ are precisely the {\it Continuous Dual Hahn} polynomials. 

Whenever $d\geqslant 3$, the $d$-orthogonality of $\{R_{n}\}_{n\geqslant0}$ implies the $d$-orthogonality of  $\{S_{n}\}_{n\geqslant0}$. 

Finally, independently on the values of the parameter $d$, the $d$-MOPS $\{R_{n}\}_{n\geqslant0}$ satisfies the differential relation 
$$
	\frac{d}{dx} R_{n+1}(x;\alpha_{1},\ldots,\alpha_{d})
	= (n+1) R_{n}(x;\alpha_{1}+1,\ldots,\alpha_{d}+1) \ , \ n\geqslant 0. 
$$
Combining this information with  \eqref{KL alpha + 1/2}, we deduce \eqref{delta Sn Laguerre}.
\end{proof}

Regarding the integral representations of the canonical elements of the dual sequence, the analysis shall be split according to the value of the parameter $d$, specially for the cases where $d=1$ or $2$. 

\subsubsection{Case where $d=1$}\label{subsec: Laguerre d=1}

The regular form $u_{0}:=u_{0}(\alpha_{1})$ corresponding to the monic Laguerre polynomials, $\{\widehat{L}_{n}(\cdot;\alpha_{1})\}_{n\geqslant 0}$ (case where $d=1$) admits the well known integral representation 
$$
	<u_{0},f> = \frac{1}{\Gamma(\alpha_{1}+1)} \int_{0}^{+\infty}
		\e^{-x} x^{\alpha_{1}} f(x) dx 
$$
with $\alpha_{1}\neq -n, \ n\in\mathbb{N}$. Within the light of Example \ref{Example: Parseval Exponential}, this integral representation induces an integral representation for the canonical form $s_{0}:=s_{0}(\alpha,\alpha_{1})$ of the corresponding $KL_{\alpha}$-transformed $2$-MOPS $\{\widehat{S}_{n}(\cdot;\alpha,\alpha_{1})\}_{n\geqslant 0}$. Indeed, after the convenient choice of $\beta = \alpha_{1} -\alpha>0$, the equality   \eqref{Parseval Exponential} permits to deduce 
\begin{eqnarray*}
<u_{0}(\alpha_{1}),f> 
	&=& \frac{1}{\Gamma(\alpha_{1}+1)}
	 \int_{0}^{\infty} x^{\alpha_{1}} \e^{-x} f(x) dx \\
	&=& \frac{\sqrt{\e}}{4\pi\Gamma(\alpha_{1}+1)} \int_{0}^{\infty} 
	KL_{\alpha}[f](\tau)  \frac{
	\left|\Gamma\left(\alpha+1+\frac{i\tau}{2}\right)  
	\Gamma\left(\alpha_{1}-\alpha +\frac{i\tau}{2}\right)\right|^{2}}
	{\left|\Gamma\left( i\tau \right)\right|^{2}}  
	W_{-\alpha_{1}+\alpha+\frac{1}{2},\frac{i\tau}{2}}(1)
	 d\tau \ , \\
	 &=& <s_{0}(\alpha,\alpha_{1}),KL_{\alpha}[f](\tau) > \ .
\end{eqnarray*}
The 2-orthogonality of $\{\widehat{S}_{n}(\cdot;\alpha,\alpha_{1})\}_{n\geqslant 0}$ requires the characterization of the second element of the corresponding dual sequence, whose integral representation can be obtained in a similar way, insofar as $u_{1}(\alpha_{1})= u_{0}(\alpha_{1}+1)-u_{0}(\alpha_{1})$, where $u_{1}$ represents the second term of the dual sequence of $\{\widehat{L}_{n}(\cdot;\alpha_{1})\}_{n\geqslant 0}$.

\subsubsection{Case where $d=2$} 

In this case, the 2-MOPS $\{R_{n}:=R_{n}(\cdot;2, \alpha_{1}, \alpha_{2})\}_{n\geqslant0}$ treated in  \cite{BCheikhDouak2000} (see also \cite{AsscheYakubov2000}) is mapped by the $KL_{\alpha}$-transform into the continuous dual Hahn polynomial sequence $\{S_{n}:=S_{n}(\cdot;\alpha, \alpha_{1}, \alpha_{2})\}_{n\geqslant0}$, as it may be observed from \cite[\S1.3]{KoekoekSwarrtow} with the parameter identification $a=\alpha+1, \ b= \alpha_{1}-\alpha$ and $c=\alpha_{2}-\alpha$. It is natural, therefore, that the properties of first ones induce the properties of the other ones. For instance,  the relations exposed in \cite[(3.5)]{BCheikhDouak2000} 
\begin{align*}
& 	R_{n+1}(x;\alpha_{1} ,\alpha_{2} ) = R_{n+1}(x;\alpha_{1} ,\alpha_{2} +1)+(n+1)(n+
			\alpha_{1} +1) R_{n}(x;\alpha_{1} ,\alpha_{2} +1) \\
&  	R_{n+1}(x;\alpha_{1} ,\alpha_{2} ) = R_{n+1}(x;\alpha_{1} +1,\alpha_{2} )+(n+1)(n+
			\alpha_{2} +1) R_{n}(x;\alpha_{1} +1,\alpha_{2} ) \\
&  	x^n = \sum_{k=0}^n \binom{n}{k} \frac{(\alpha_{1} +1)_{n}(\alpha_{2} +1)_{n}}{(\alpha_{1} 
			+1)_{k}(\alpha_{2} +1)_{k}} R_{k}(x;\alpha_{1} ,\alpha_{2} )\\
&  	R_{n}(x;\alpha_{1} ,\alpha_{2} ) = R_{n}(x;\alpha_{1} +1,\alpha_{2} +1) 
		+ (n+1)(2n+3+\alpha_{1} +\alpha_{2} ) R_{n-1}(x;\alpha_{1} +1,\alpha_{2} +1)		\\
	&	 \hspace{2.7cm} + n(n+1)(n+1+\alpha_{1} )(n+\alpha_{2} +1) R_{n-2}(x;\alpha_{1} +1,\alpha_{2} +1)
\end{align*}
ensure the following ones  
\begin{align*}
 	&S_{n+1}(\tfrac{\tau^{2}}{4};\alpha;\alpha_{1} ,\alpha_{2} ) 
			= S_{n+1}(\tfrac{\tau^{2}}{4};\alpha;\alpha_{1} ,\alpha_{2} +1)
			+(n+1)(n+\alpha_{1} +1) S_{n}(\tfrac{\tau^{2}}{4};\alpha;\alpha_{1} ,\alpha_{2} +1) \\
 	&S_{n+1}(\tfrac{\tau^{2}}{4};\alpha;\alpha_{1} ,\alpha_{2} ) 
			= S_{n+1}(\tfrac{\tau^{2}}{4};\alpha;\alpha_{1} +1,\alpha_{2} )
			+(n+1)(n+\alpha_{2} +1) S_{n}(\tfrac{\tau^{2}}{4};\alpha;\alpha_{1} +1,\alpha_{2} ) \\
 	&\left| \left(\alpha+1+i\frac{\tau}{2}\right)_{n}\right|^{2}
			= \sum_{k=0}^n \binom{n}{k} \frac{(\alpha_{1} +1)_{n}(\alpha_{2} +1)_{n}}
			{(\alpha_{1} +1)_{k}(\alpha_{2} +1)_{k}} S_{k}(\tfrac{\tau^{2}}{4};\alpha;\alpha;\alpha_{1} ,\alpha_{2} )\\
 	&S_{n}(\tfrac{\tau^{2}}{4};\alpha;\alpha_{1} ,\alpha_{2} ) = S_{n}(\tfrac{\tau^{2}}{4};\alpha;\alpha_{1} +1,\alpha_{2} +1) 
		+ (n+1)(2n+3+\alpha_{1} +\alpha_{2} ) S_{n-1}(\tfrac{\tau^{2}}{4};\alpha;\alpha_{1} +1,\alpha_{2} +1)		\\
		& \qquad \qquad\qquad\qquad + n(n+1)(n+1+\alpha_{1} )(n+\alpha_{2} +1) S_{n-2}(\tfrac{\tau^{2}}{4};\alpha;\alpha_{1} +1,\alpha_{2} +1) \ , \ n\geqslant 0. 
\end{align*}
Besides, on account of \eqref{prop KL alpha+n}, the relation 
$$
	xR_{n}(x;\alpha_{1} +1,\alpha_{2} +1)=R_{n+1} (x;\alpha_{1} ,\alpha_{2} ) + (n+\alpha_{1} +1)(n+\alpha_{2} +1)R_{n}(x;\alpha_{1} ,\alpha_{2} )
$$
implies 
$$
	\left(\tfrac{\tau^{2}}{4} + (\alpha+1)^{2}\right) 
	S_{n} (\tfrac{\tau^{2}}{4};\alpha+1;\alpha_{1} +1,\alpha_{2} +1)
	= S_{n+1} (\tfrac{\tau^{2}}{4};\alpha;\alpha_{1} ,\alpha_{2} ) 
	+ (n+\alpha_{1} +1)(n+\alpha_{2} +1)S_{n}(\tfrac{\tau^{2}}{4};\alpha;\alpha_{1} ,\alpha_{2} )
$$

Representing by $\{v_{n}:=v_{n}(\alpha_{1}, \alpha_{2})\}_{n\geqslant0}$ the dual sequence of the 2-MOPS $\{R_{n}:=R_{n}(\cdot;2, \alpha_{1}, \alpha_{2})\}_{n\geqslant0}$, we have \cite{BCheikhDouak2000}: 
\begin{eqnarray*}
	&& <v_{0},f> =\frac{2}{\Gamma(1+\alpha_{1})\Gamma(1+\alpha_{2} )} \int_{0}^\infty f(x) 
			x^{(\alpha_{1}+\alpha_{2} )/2} K_{\alpha_{1}-\alpha_{2} }(2\sqrt{x}) dx\\
	&& <v_{1},f> =\frac{-2}{\Gamma(2+\alpha_{1})\Gamma(2+\alpha_{2} )} \int_{0}^\infty f(x) 
			\left( x^{(\alpha_{1}+\alpha_{2} )/2} K_{\alpha_{1}-\alpha_{2} }(2\sqrt{x}) \right)' dx
\end{eqnarray*}
In the light of relation \eqref{rel Parseval Kitau}, upon the choices of $\beta=\frac{\alpha_{1}+\alpha_{2}}{2} -\alpha$ and $i\mu=\alpha_{1}-\alpha_{2}$, we come out with  
\begin{align} \label{rel v0 regular to s0 CDH}
\begin{array}{lcl}		
	\ds <v_{0}(\alpha_{1},\alpha_{2}),f> 
	&=& \ds  \frac{1}{4\pi\Gamma(1+\alpha_{1})\Gamma(1+\alpha_{2} )
		\Gamma(\alpha_{1}+\alpha_{2} -2\alpha ) } \\
	&&	\ds \times \int_{0}^{\infty} KL_{\alpha}[f](\tau)
		 \frac{\left| 
		\Gamma\left(\alpha_{1}-\alpha+\frac{i\tau}{2}\right)
		\Gamma\left(\alpha_{2}-\alpha+\frac{i\tau}{2}\right)  
		\Gamma\left(\alpha+1+\frac{i\tau}{2}\right)\right|^{2}}
		{ \left| \Gamma\left(i\tau\right)\right|^{2}}
		d\tau \\
	&=& \ds <s_{0}(\alpha_{1},\alpha_{2};\alpha), KL_{\alpha}[f](\tau)> 
\end{array}
\end{align}
where $\{s_{n}(\alpha_{1}, \alpha_{2};\alpha)\}_{n\geqslant0}$ represents the dual sequence of the (monic) continuous dual Hahn (orthogonal) polynomial sequence. 

In particular, if we consider $f(x)=x^{n}$ and we take into account \eqref{CentralFact coeffs}, we readily observe the linear relation between the two  sequences of moments. 
The form $v_{0}(\alpha_{1},\alpha_{2})$ is obviously a regular form, but the characterization of the recurrence coefficients of the corresponding monic orthogonal polynomial sequence remains unsolved. 
The $KL_{\alpha}$-operator despite mapping this regular form into the regular form corresponding to the continuous dual Hahn polynomials, it is not a map between orthogonal polynomial sequences, as it will become clear after Theorem \ref{Thm: classification}. Notwithstanding this, another suitable manipulation with these index transforms seems to bring advances to the problem of the characterization of such MOPS. These considerations will be placed in an ensuing work. 

It is worth to mention that, while working with the so-called $J$-matrix method, in \cite[\S3]{IsmailKoelink} the authors showed how the continuous dual Hahn polynomials are involved in the tridiagonalization of an operator connected to the Laguerre polynomials.

\section{The $KL_{\alpha}$-transform of a MOPS}\label{sec: KL MOPS to dMOPS}

In the precedent section were brought into analysis orthogonal polynomial sequences whose $KL_{\alpha}$-transform is a $d$-orthogonal sequence. The examples were the classical sequences of Hermite and Laguerre. So this triggers the quest of finding all the orthogonal sequences that share the same property - Theorem \ref{Thm: classification} brings the answer. 

Prior to this goal, we analyze the connection between the coefficients of a MPS $\{B_{n}\}_{n\geqslant 0}$ and the corresponding $KL_{\alpha}$-transformed sequence, $\{S_{n}(\cdot,\alpha)\}_{n\geqslant 0}$:
\begin{equation}\label{Sn def}
	S_{n}(\tfrac{\tau^{2}}{4};\alpha):=S_{n}(\tfrac{\tau^{2}}{4}) = KL_{\alpha}[B_{n}](\tau) \ , \ n\in\mathbb{N}_{0}, 
\end{equation}
without further assumptions over the two sequences, except eventually, the orthogonality of $\{B_{n}\}_{n\geqslant 0}$. 

In the light of Lemma \ref{lem:KL alpha differential}, when $m=1$ and $n=0$ the relation \eqref{KL alpha differential} ensures 
\begin{equation}\label{tau Sn KL diff Bn eq1}
	-  \left( \frac{\tau^{2}}{4} + \alpha^{2}\right)\ S_{n}(\tau^{2}/4) 
	= KL_{\alpha}\left[\left( \frac{1}{x} \mathcal{A} x + 2\alpha \frac{d}{dx}x \right)  B_{n}(x)\right](\tau) 
	\ , \ n\in\mathbb{N}_{0}.
\end{equation}
No matter the MPS $\{S_{n}\}_{n\geqslant 0}$ considered, it always admits the structural relation 
\begin{equation}\label{struct rel Sn}
	S_{n+1}( \tfrac{\tau^{2}}{4}) 
	 = \left(\tfrac{\tau^{2}}{4}- \zeta_{n} \right) S_{n}( \tfrac{\tau^{2}}{4}) 
	 -  \sum_{\nu=0}^{n-1} a_{n,\nu} S_{\nu}(\tfrac{\tau^{2}}{4})
	 \ , \ n\in\mathbb{N}_{0},
\end{equation}
and therefore, from \eqref{tau Sn KL diff Bn eq1} we obtain 
$$
	-S_{n+1}(\tfrac{\tau^{2}}{4}) - (\zeta_{n} +\alpha^{2}) S_{n}(\tfrac{\tau^{2}}{4})
	- \sum_{\nu=0}^{n-1} a_{n,\nu} S_{\nu}(\tau^{2}/4)
	=KL_{\alpha}\left[\left( \frac{1}{x} \mathcal{A} x + 2\alpha \frac{d}{dx}x \right)  B_{n}(x)\right](\tau) 
	\ , \ n\in\mathbb{N}_{0}, 
$$
which, on account of \eqref{Sn def} and then arguing with the linearity and injectivity of the $KL_{\alpha}$-operator, leads to 
\begin{equation} \label{rec differential for any MOPS}
	-B_{n+1}(x) - (\zeta_{n} +\alpha^{2}) B_{n}(x)
	- \sum_{\nu=0}^{n-1} a_{n,\nu} B_{\nu}(x)
	= \left( \frac{1}{x} \mathcal{A} x + 2\alpha \frac{d}{dx}x \right) B_{n}(x) \ , \ n\in\mathbb{N}_{0}. 
\end{equation}
Consider the expansion of the elements of $\{B_{n}\}_{n\geqslant0}$ 
$$
	B_{n}(x) = x^n + \sum_{\nu=0}^{n-1} b_{n,\nu} x^{\nu}\ , \ n\geqslant 0,
$$
under the convention $b_{n,n}=1$ and $b_{k,m}=0$ whenever $k<m$ or $m<0$. 
Therefore, by equating the coefficients of the powers of $x$ in \eqref{rec differential for any MOPS}, it follows 
\begin{equation}\label{bn zeta and ann 1}
	b_{n+1,\nu}
	= b_{n,\nu-1} -\left((\nu+1+\alpha)^{2}  +\zeta_{n}  \right)b_{n,\nu}
		- \sum_{\mu=\nu}^{n-1} a_{n,\mu} b_{\mu,\nu} \ , \ 
		0\leqslant \nu\leqslant n+1.
\end{equation}

On the other hand, whenever $\{B_{n}\}_{n\geqslant0}$ is  a MOPS, fulfilling the recurrence relation 
\begin{equation}\label{Rec Rel Bn}
	B_{n+2}(x) = (x-\beta_{n+1})B_{n+1}(x) - \gamma_{n+1}B_{n}(x) \ ,  \ n\in\mathbb{N}_{0} ,
\end{equation}
with $B_{0}(x)=1$ and $B_{1}(x)=x-\beta_{0}$, we necessarily have 
$$
	\beta_{n} = b_{n,n-1} - b_{n+1,n}
	\quad , \quad 
	\gamma_{n+1} = b_{n,n-2} - b_{n+1,n-1} -\beta_{n+1} b_{n+1,n} \ , \ n\geqslant 0. 
$$
and 
$$
	b_{n+1,\nu} = b_{n,\nu-1} - \beta_{n} b_{n,\nu}  - \gamma_{n} b_{n-1,\nu} \ .
$$
The insertion of the latter equality in \eqref{bn zeta and ann 1} provides 
$$
	\beta_{n} b_{n,\nu} + \gamma_{n} b_{n-1,\nu}
	= \left((\nu+1+\alpha)^{2}  +\zeta_{n}  \right)b_{n,\nu}
		+ \sum_{\mu=\nu}^{n-1} a_{n,\mu} b_{\mu,\nu}
$$
The particular choice of $\nu=n$ 
\begin{equation}\label{zeta n and beta n}
	\zeta_{n} = \beta_{n} - (\alpha+n+1)^{2} \ , \ n\geqslant 0, 
\end{equation}
whence 
$$
	 \gamma_{n} b_{n-1,\nu}
	= -(n-\nu ) (2 \alpha +\nu +n+2)b_{n,\nu}
		+ \sum_{\mu=\nu}^{n-1} a_{n,\mu} b_{\mu,\nu}
$$
For $\nu=n-1$, we obtain 
\begin{equation}\label{an n-1 in terms of bn}
	a_{n,n-1} = \gamma_{n} + (2n+2\alpha-1) b_{n,n-1}
			=  \gamma_{n} -  (2n+2\alpha-1) \sum_{\mu=0}^{n-1} \beta_{\nu}
			\ , \ n\geqslant 1, 
\end{equation}
while the remaining ones can be recursively computed via
%
\begin{equation}\label{an nu in terms of bn}
	a_{n,n-\nu} = -(2n+2\alpha-1) b_{n,n-1} b_{n-1,n-\nu}
	+ \nu (2 \alpha +2n-\nu+2) b_{n,n-\nu}
	-  \sum_{\mu=2}^{\nu-1} a_{n,n-\mu} b_{n-\mu,n-\nu}
	\ , \ 2\leqslant \nu\leqslant n . 
\end{equation}

When further assumptions, like differential properties or finite type relations, are made over the MOPS $\{B_{n}\}_{n\geqslant 0}$ or the MPS $\{S_{n}\}_{n\geqslant 0}$, then we can afford to characterize both sequences. 


The quest regarding the class of all orthogonal polynomial sequences whose $KL_{\alpha}$-transform is a $d$-orthogonal polynomial sequence is clarified in the next result. The precedent examples of Hermite or Laguerre polynomials already ensure this collection is not empty. 
Hence, we will assume from this point forth that $\{S_{n}\}_{n\geqslant 0}$ is $d$-orthogonal, which amounts to the same as requiring the aforementioned coefficients $a_{n,n-\nu}=0$ for $\nu=0,1,\ldots,d-1$ whenever $n\geqslant d$. The next result aims to characterize all the possible solutions of this problem.

\begin{theorem}\label{Thm: classification} Let $\{B_{n}\}_{n\geqslant 0}$ be a MOPS with respect to $u_{0}$. If the corresponding $KL_{\alpha}$-transformed sequence $\{S_{n}\}_{n\geqslant 0}$ is a $d$-MOPS, then $d$ must be an even integer greater or equal than 2 and $\{B_{n}\}_{n\geqslant 0}$ is necessarily a semiclassical sequence of class $s\in\{\max(0,d/2-2),d/2-1,d/2\}$. The corresponding regular form $u_{0}$ fulfills 
$$
	D(\phi u_{0}) + \psi u_{0}=0
$$
with $(\phi,\psi)$ representing a pair of admissible polynomials given by one of the following expressions: 
\begin{enumerate}
\item[{\bf a)}] $\phi(x)=x^{2}$ and $\psi(x)= x (N \rho(x) - (3+2\alpha))$ with $\rho(0)=0$, $\langle u_{0}, \rho(x)\rangle \neq N^{-1} (2+2\alpha)$ and $\alpha\neq-\frac{n+3}{2}$, $n\in\mathbb{N}_{0} $ ($u_{0}$ is of class $s=\frac{d}{2}$); 

\item[{\bf b)}] $\phi(x)=x$ and $\psi(x)= N \rho(x) - (2+2\alpha)$ with $\langle u_{0}, \rho(x)\rangle \neq N^{-1} (2+2\alpha)$ and $\alpha\neq-\frac{n}{2}-1$, for $n\geqslant 1$ (the class of $u_{0}$ is $s=\frac{d}{2}-1$);

\item[{\bf c)}] $\phi(x)=1$ and $\psi(x)= N \theta_{0}\rho(x)$ with $N\rho(0)=1+2\alpha$  (the class of $u_{0}$ is $s=\frac{d}{2}-2$ as long as $d\geqslant 4$);
\end{enumerate}
where $\rho(x)$ is a monic polynomial such that $\deg \rho (x)=\frac{d}{2}$ and  $N\neq0$ is a normalization constant. Moreover, the MOPS $\{B_{n}\}_{n\geqslant 0}$ fulfills 
\begin{equation}\label{diff rel for Bn in thm}
	\begin{array}{l}
	\ds x^{2} B_{n}''(x) +x \Big(N \rho(x) - (3+2\alpha)\Big) B_{n}'(x) \\
	\ds - \Big\{ N\rho(x) \Big(N \rho(x) - (2+2\alpha) \Big) - N x \rho'(x)-x+(1+2\alpha)  \Big\} B_{n} 
		=- \sum_{\nu=n-1}^{n+d} \rho_{n,\nu}^d B_{\nu}(x)
	\end{array}
\end{equation}
where 
$$\rho_{n,\mu}^d =\left\{ 
	\begin{array}{ccl}
	\ds \frac{\langle u_{0},B_{n}^2\rangle}{\langle u_{0},B_{\mu}^2\rangle}
		\alpha_{n+1}^{n+d-\mu} &\text{ if } & n+1\leqslant \mu\leqslant n+d, 
			\ \text{with } \ n\geqslant 0, \\
	\ds \zeta_{n}-\alpha^2 &\text{ if } & \mu=n , \ \text{with } \ n\geqslant 0, \\
	\ds \gamma_{1} &\text{ if } & \mu=n-1 \ \text{with } \ n\geqslant 1\ .
\end{array}\right.$$
\end{theorem}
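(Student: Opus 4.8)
The plan is to extract a functional equation for $u_0$ from the differential--recurrence already available, and then to read off the semiclassical data by a factorization argument. First I would take relation \eqref{rec differential for any MOPS}, valid for every MOPS, and impose the hypothesis that $\{S_n\}_{n\geqslant0}$ is $d$-orthogonal: in \eqref{struct rel Sn} this forces the coefficients $a_{n,\nu}$ to vanish outside the band $n-d\leqslant\nu\leqslant n-1$, so that $\bigl(\tfrac1x\mathcal A x+2\alpha\tfrac{d}{dx}x\bigr)B_n$ is a combination of $B_{n-d},\dots,B_{n+1}$. Writing this operator as $x^2D^2+(2\alpha+3)xD-x+(2\alpha+1)$ with $D=\tfrac{d}{dx}$ and removing the single degree-raising term $-xB_n$ through the three-term recurrence \eqref{Rec Rel Bn}, I isolate the degree-preserving operator $\mathcal L_0:=x^2D^2+(2\alpha+3)xD+(2\alpha+1)$ and obtain the cleaner statement that $\mathcal L_0B_n$ lies in the linear span of $B_{n-d},\dots,B_n$, with the coefficient of $B_{n-d}$ nonzero by the genuine $d$-orthogonality.

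Next I would dualize. Using $u_n=\langle u_0,B_n^2\rangle^{-1}B_nu_0$ together with the expansion \eqref{u in terms of un}, the transpose of $\mathcal L_0$ sends $u_0$ to a finite combination $\sum_{n=0}^{d}\lambda_{n,0}u_n=R(x)u_0$, where $R$ is a polynomial of degree exactly $d$, its leading coefficient being inherited from the nonzero bottom-of-band coefficient found above. On the other hand ${}^t\mathcal L_0u_0=D^2(x^2u_0)-D((2\alpha+3)xu_0)+(2\alpha+1)u_0$, which I would simplify by the Leibniz rule for forms to $x^2u_0''+(1-2\alpha)xu_0'$. Thus $u_0$ satisfies the second-order functional equation $x^2u_0''+(1-2\alpha)xu_0'=R\,u_0$, whose left-hand operator factors as $(xD)(xD-2\alpha)$.

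The heart of the argument is to show that this second-order equation descends to a genuine (first-order) semiclassical equation. I would look for a polynomial $b$ with $R=b^2+xb'-2\alpha b$; comparing top degrees forces $\deg R=2\deg b$, so $d=2\deg b$ must be \emph{even} and $\deg b=d/2$. Writing $b=1+2\alpha-N\rho$ with $\rho$ monic of degree $d/2$ and $N\neq0$ identifies $\rho$ and $N$. With $a:=2\alpha-b$ one has the operator identity $(xD)(xD-2\alpha)-R=(xD-a)(xD-b)$, so $v:=(xD-b)u_0$ obeys $(xD-a)v=0$; the crux is to force $v=0$, which turns $(xD-b)u_0=0$, i.e.\ $xu_0'=bu_0$, into a first-order equation for $u_0$. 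This is the step I expect to be the main obstacle: ruling out the second solution branch and handling the kernel of multiplication by $x$ on forms (the Dirac-type obstruction at the origin) is exactly what the admissibility and regularity conditions in the statement encode, and it is where the hypotheses $\alpha\neq-\tfrac{n+3}{2}$, $\alpha\neq-\tfrac n2-1$, and the like enter.

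Finally I would convert $xu_0'=bu_0$ into the canonical form $(\phi u_0)'+\psi u_0=0$. Since multiplication by $x$ is not injective on forms, the same $b$ admits several admissible pairs $(\phi,\psi)$ distinguished by the simplification criterion \eqref{cond Simplification semiclassical} applied at the root $x=0$ of $\phi$: whether $x^2$, $x$ or $1$ is forced is governed by $N\rho(0)$ versus $1+2\alpha$ and by $\langle u_0,\rho\rangle$ versus $N^{-1}(2+2\alpha)$, yielding precisely cases (a), (b), (c). In each case I would compute the class $s=\max(\deg\phi-2,\deg\psi-1)$; since $\deg\rho=d/2$ this gives $s=d/2,\ d/2-1,\ d/2-2$ respectively (the last requiring $d\geqslant4$, whence the value $\max(0,d/2-2)$). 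To obtain the differential relation \eqref{diff rel for Bn in thm} for $B_n$ I would pass to the $u_0$-adjoint of the operator $\tfrac1x\mathcal A x+2\alpha\tfrac{d}{dx}x$: the weight factor $u_0'/u_0=(1+2\alpha-N\rho)/x$ introduces exactly the $N\rho$ terms appearing there, and since the adjoint reverses the band it produces a combination of $B_{n-1},\dots,B_{n+d}$; matching $\langle u_0,(\mathcal L_0B_n)B_\mu\rangle$ against this reversed band then identifies the coefficients $\rho_{n,\mu}^d$ as the stated norm ratios times the structure constants.
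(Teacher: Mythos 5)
Your reduction to the single functional equation $x^{2}u_{0}''+(1-2\alpha)xu_{0}'=R\,u_{0}$ with $\deg R=d$ is correct (and agrees with the paper's $n=0$ instance, equation \eqref{eq12 u0 when d Ortho}), but the proposal stalls exactly where you say it does, and the gap is not a technicality. You propose to factor $(xD)(xD-2\alpha)-R=(xD-a)(xD-b)$ by finding a polynomial $b$ with $R=b^{2}+xb'-2\alpha b$, and then to force $(xD-b)u_{0}=0$. Neither step is available from the data you have extracted. The existence of such a polynomial $b$ is a genuine constraint on the coefficients of $R$ that you have not verified (matching leading terms only shows that \emph{if} $b$ exists then $d$ is even; it does not rule out odd $d$, for which the ansatz simply has no solution and your argument says nothing). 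And even granting $b$, the operator $(xD-a)$ has a nontrivial kernel on $\mathcal{P}'$, so $v:=(xD-b)u_{0}$ need not vanish; the admissibility hypotheses on $\alpha$ in the statement are not by themselves enough to collapse a second-order equation to a first-order one.

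The missing ingredient is that you have only dualized against $u_{0}$, i.e.\ used the $n=0$ instance of the band relation. The paper's proof keeps the full family of dual relations ${}^{t}\!\left(\frac{1}{x}\mathcal{A}x+2\alpha\frac{d}{dx}x\right)(B_{n}u_{0})=-A_{n}^{d}(x)u_{0}$ (equation \eqref{proof thm rel without d ortho}), expands the left side in $B_{n},B_{n}',B_{n}''$, and then uses the $n=1$ instance to obtain a second, genuinely \emph{first-order} functional equation $2(x^{2}u_{0})'-(3+2\alpha)xu_{0}=\{-A_{1}^{d}+A_{0}^{d}B_{1}\}u_{0}$ (equation \eqref{eq2 u0 when d Ortho}). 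It is the compatibility condition \eqref{Cond dOrtho} between this equation and your second-order one that forces $\phi_{2}(0)=0$, yields $2\deg\widetilde{\phi}_{2}=\deg(\psi_{2}+(x\widetilde{\phi}_{2})')$, hence $d$ even and $\deg\rho=d/2$, and delivers the semiclassical equation directly — no factorization of a second-order operator is ever needed. Your final paragraph (cases a)–c) via the simplification criterion at $x=0$, and the class count $s=\max(\deg\phi-2,\deg\psi-1)$) matches the paper once the first-order equation is in hand, but as written the proof cannot reach that point. To repair it along your own lines, compute ${}^{t}\mathcal{L}_{0}(B_{1}u_{0})$ as well; the terms in $u_{0}'$ that survive after subtracting $B_{1}\cdot{}^{t}\mathcal{L}_{0}u_{0}$ are precisely the first-order information you are missing.
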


\begin{proof} According to \eqref{Ttranspose}, by duality, we readily deduce the adjoint or transpose operator - 
${}^{t}\!\!\left( \frac{1}{x} \mathcal{A} x + 2\alpha \frac{d}{dx}x \right)= \left(   \mathcal{A}  - 2\alpha x\frac{d}{dx} \right)$ - in order to deduce from  \eqref{rec differential for any MOPS} the corresponding relation fulfilled by the dual sequence  $\{u_{n}\}_{n\geqslant 0}$ of  the MPS $\{B_{n}\}_{n\geqslant 0}$.  
Indeed, \eqref{rec differential for any MOPS} implies 
\begin{eqnarray*}
	<  \left(   \mathcal{A}  - 2\alpha x\frac{d}{dx} \right)u_{n} \ , \  B_{\nu} >
	&=& - \delta_{n,\nu+1} -  (\zeta_{\nu} -\alpha^{2}) \delta_{n,\nu} 
	- \sum_{\mu=0}^{\nu-1} a_{\nu,\mu} \delta_{n,\mu}
\end{eqnarray*}
which, on the grounds of  \eqref{u in terms of un}, brings 
$$
	 \left(   \mathcal{A}  - 2\alpha x\frac{d}{dx} \right)(u_{n})
	 = - u_{n-1} - (\zeta_{n} -\alpha^{2}) u_{n} 
	 -  \sum_{\nu\geqslant n+1} a_{\nu,n} u_{\nu} \ , \ n\geqslant 0. 
$$
The orthogonality of $\{B_{n}\}_{n\geqslant0}$ not only implies the second order recurrence relation \eqref{Rec Rel Bn}, but also corresponds to write $u_{n}=(<u_{0},B_{n}^{2}>)^{-1}B_{n}u_{0}$, and therefore  
\begin{equation}\label{proof thm rel without d ortho}
	 \left(   \mathcal{A}  - 2\alpha x\frac{d}{dx} \right)(B_{n}u_{0})
	 = -\gamma_{n} B_{n-1}u_{0} - (\zeta_{n} -\alpha^{2}) B_{n} u_{0} 
	 -  \sum_{\nu\geqslant n+1} a_{\nu,n} \frac{<u_{0},B_{n}^{2}>}{<u_{0},B_{\nu}^{2}>} B_{\nu} u_{0}
\end{equation}

Since  $\{S_{n}\}_{n\geqslant0}$ is a $d$-orthogonal, it fulfills  \eqref{struct rel Sn} where 
\begin{align*}
	& a_{n,\nu} = \alpha_{\nu+1}^{d+\nu-n}
			\quad \text{ with } \quad 
			\alpha_{\nu+1}^0\neq0 \ , \quad n-d\leqslant \nu\leqslant n-1,
			\ n\geqslant d, 
	\\
	& a_{n,\nu}=0 \ , \ 0\leqslant \nu \leqslant n-d-1 \ , \ n\geqslant d \\
	& a_{n,\nu} = \alpha_{\nu+1}^{d+\nu-n} \ , \ 0\leqslant \nu\leqslant n-1 \ , \ 1\leqslant n\leqslant d-1.
\end{align*}
The recurrence coefficients $(\zeta_{n}, \alpha_{n+1}^{d-1},\ldots \alpha_{n+1}^{0})_{n\geqslant 0}$ can of course be computed provided we know the necessary data of the original sequence $\{B_{n}\}_{n\geqslant0}$, via the relations \eqref{zeta n and beta n}-\eqref{an nu in terms of bn}. 

The $d$-orthogonality $\{S_{n}\}_{n\geqslant0}$, permits to transform the right hand side of \eqref{proof thm rel without d ortho} into a finite sum. Thus, by collecting its left-hand side in terms of $B_{n}$, $B_{n}'$ and $B_{n}''$, relation \eqref{proof thm rel without d ortho} becomes 
\begin{equation}\label{CondPn OrthoRel when Sn d ortho f2}
	B_{n}\Big(    \left( \mathcal{A} -2\alpha D x +2\alpha \right) u_{0} \Big)
	+ B_{n}' \Big(  2(x^{2} u_{0})'- (2\alpha +3) x \, u_{0} \Big) 
	+ B_{n}'' x^{2} u_{0} 
	= - A_{n}^d(x) u_{0}\ , \ n\geqslant 0, 
\end{equation}
with  
\begin{equation}\label{D polys}
	A_{n}^d(x) = \sum_{\mu=0}^{d-1} \frac{\langle u_{0},B_{n}^2\rangle}{\langle u_{0},B_{n+\mu+1}^2\rangle}
		\alpha_{n+1}^{d-1-\mu} B_{n+\mu+1} +\left( \zeta_{n} -\alpha^2\right)B_{n}+ \gamma_{n} B_{n-1}
\end{equation}
under the convention $\gamma_{0}=0$. The fact $\alpha_{n+1}^0\neq0$ ensures $\deg( A_{n}^d ) = n+d$, $n\geqslant 0$.

The particular choice of $n=0$ in \eqref{CondPn OrthoRel when Sn d ortho f2} brings 
\begin{equation}	\label{eq11 u0 when d Ortho}
	   \left( \mathcal{A} -2\alpha D x +2\alpha \right) u_{0} = -A_{0}^d (x) u_{0} 
\end{equation}
i.e., 
\begin{equation}	\label{eq12 u0 when d Ortho}
	\left(x^2 u_{0}\right)''-(3+2\alpha) \left(x u_{0}\right)' + (A_{0}^d (x)-x+1+2\alpha)u_{0}=0 ,
\end{equation}
while  \eqref{CondPn OrthoRel when Sn d ortho f2} with $n=1$ corresponds to   
\begin{equation}	\label{eq2 u0 when d Ortho}
	2(x^{2} u_{0})'-(3+2\alpha) x u_{0} = \left\{ -A_{1}^d(x) +  A_{0}^d(x) B_{1}(x)  \right\}u_{0}.
\end{equation}
Inserting \eqref{eq11 u0 when d Ortho} and \eqref{eq2 u0 when d Ortho} into \eqref{CondPn OrthoRel when Sn d ortho f2} leads to the differential-recursive relation 
\begin{equation*}
		-A_{0}^d (x) B_{n} u_{0} 
	+\left\{ -A_{1}^d(x) +  A_{0}^d(x) B_{1}(x)  \right\} B_{n}'  u_{0}
	+ B_{n}'' x^{2} u_{0} 
	=- A_{n}^d(x) u_{0} \ , \ n\geqslant 0
\end{equation*}
which, due to the regularity of $u_{0}$, becomes 
\begin{equation}\label{proof: diff rel for Bn}
	x^{2} B_{n}''(x) 
	-\left\{ A_{1}^d(x) -  A_{0}^d(x) B_{1}(x)  \right\} B_{n}'(x)
	=- A_{n}^d(x)+A_{0}^d (x) B_{n}  \ , \ n\geqslant 0.
\end{equation}

Between \eqref{eq12 u0 when d Ortho} and \eqref{eq2 u0 when d Ortho} after a single differentiation
\begin{eqnarray*}
	&& 	2\left(x^2 u_{0}\right)''-2(3+2\alpha) \left(x u_{0}\right)' + 2(A_{0}^d (x)-x+1+2\alpha)u_{0}=0 \\
	&& 	2(x^{2} u_{0})'' - \left(\left\{(3+2\alpha)x -A_{1}^d(x) +  A_{0}^d(x) B_{1}(x)  \right\}u_{0} \right)' =0
\end{eqnarray*}
we deduce 
\begin{equation}\label{eq13 u0 when d Ortho} 
	 \bigg(\left\{-(3+2\alpha)x -A_{1}^d(x) +  A_{0}^d(x) B_{1}(x)  \right\}u_{0} \bigg)' 
	 +2\bigg(A_{0}^d(x)-x+1+2\alpha\bigg)u_{0}=0
\end{equation}
Considering 
\begin{equation}\label{phi1phi2 psi1psi2}
	\begin{array}{lcl}
	 \phi_{1}(x) = x^{2}   &;&   
	\psi_{1}(x)
		=\frac{1}{2}  N_{2}\phi_{2}(x) -(3+2\alpha)x \vspace{0.3cm}
		\\
	\phi_{2}(x) = N_{2}^{-1} \left( A_{1}^d(x) -   A_{0}^d(x) B_{1}(x) + (3+2\alpha)x \right)  
	 &;&  
	\psi_{2}(x)=- 2N_{2}^{-1} \left(A_{0}^d(x) - x + 1+ 2\alpha \right)
\end{array}\end{equation}
we then have from \eqref{eq2 u0 when d Ortho} and \eqref{eq13 u0 when d Ortho} that $u_{0}$ fulfills 
\begin{equation}\label{dOPS eq u0 1 and 2}
	D(\phi_{j} u_{0}) + \psi_{j} u_{0}=0 \ , \ j=1,2, 
\end{equation}
In particular and on account of the regularity of $u_{0}$, the two latter equations provide the condition 
\begin{equation}\label{Cond dOrtho}
	\phi_{2}(x) \Big( \frac{1}{2}  N_{2}\phi_{2}(x) -(1+2\alpha)x \Big) = x^2 (\psi_{2}(x) + \phi_{2}'(x)). 
\end{equation}

By taking $x\to0$ in \eqref{Cond dOrtho}, we readily observe that $\phi_{2}(0)=0$, thus there is a monic polynomial $\widetilde{\phi}_{2}$ such that $\phi_{2}(x)=x \widetilde{\phi}_{2}(x)$ and the condition \eqref{Cond dOrtho} can be becomes 
\begin{equation}\label{Cond dOrtho 2}
	\psi_{2}(x)
	= \widetilde{\phi}_{2}(x) \Big( \frac{1}{2}  N_{2}\widetilde{\phi}_{2}(x) -(1+2\alpha) \Big) 
	- (x \widetilde{\phi}_{2}(x))'  \ .
\end{equation}
The regularity of $u_{0}$ implies that $\deg\widetilde{\phi}_2(x) \geqslant 1$, otherwise the condition $\deg\psi_{2}\geqslant 1$ would be contradicted  (whence, by recalling its definition in \eqref{phi1phi2 psi1psi2},  $\deg \psi_{2}=d$).  Thus, an inspection about the degrees of the polynomials on both sides of \eqref{Cond dOrtho 2} leads to 
$$
	2\deg (\widetilde{\phi}_{2}) = \deg \Big(\psi_{2}(x) +(x \widetilde{\phi}_{2}(x))'\Big)
$$
which implies $1\leqslant\deg\widetilde{\phi}_{2} \leqslant d-1$ and, therefore, 
$
	\deg (\widetilde{\phi}_{2}) = \frac{d}{2} =( \deg \psi_{1} )-1 \ .
$

Consequently, representing by $\rho(x):=\widetilde{\phi}_{2}(x)$ and $N:=\frac{N_{2}}{2}$, the regular form $u_{0}$ fulfills the two following equations 
\begin{align*}
	& x\Big((x u_{0})' + (N\rho(x) - (2+2\alpha)) u_{0} \Big)=0 \\
	& \rho(x) \Big( (x u_{0})' + (N\rho(x) - (2+2\alpha)) u_{0} \Big) =0
\end{align*}
In case $\rho(0) \neq 0 $, then we necessarily have 
\begin{equation}\label{thmproof: diff eq for u0 case b}
	(x u_{0})' + (N\rho(x) - (2+2\alpha)) u_{0} = 0
\end{equation}
otherwise, if $\rho(0)=0$ then $u_{0}$ would fulfill 
\begin{equation}\label{thmproof: diff eq for u0 case a}
	(x^{2} u_{0})' + (N {\rho(x)} - (3+2\alpha)) x\, u_{0} = 0
\end{equation}
which could be simplified into \eqref{thmproof: diff eq for u0 case b} if 
$\langle u_{0} , (N{\rho(x)} - (2+2\alpha))\rangle = 0 $. 

This ensures that only three possible situations are admissible: 
\begin{description}
\item[{\rm Case a) }] The assumptions $\rho(0)=0$ and $\langle u_{0} , (N{\rho(x)} - (2+2\alpha))\rangle \neq 0 $ with $\alpha\neq-\frac{n+3}{2}$, $n\geqslant0$,  ensure the irreducibility of \eqref{thmproof: diff eq for u0 case a}. In this case $u_{0}$ is a semiclassical form of class $s=\frac{d}{2}$. 

\item[{\rm Case b)}] Whenever $\langle u_{0} , (N {\rho(x)} - (2+2\alpha))\rangle = 0 $ (no matter whether $\rho(0)$ equals zero or not),  but as long as 
$$\Big| N \rho(0) - (1+2\alpha) \Big| + \Big|<u_{0},  (\theta_{0}\rho)(x)>\Big| \neq0 , $$
then the class of the semiclassical form $u_{0}$ is determined by the equation \eqref{thmproof: diff eq for u0 case b}. 

\item[{\rm Case c)}] In case  $ N \rho(0) -(1+2\alpha) = 0 = <u_{0},  (\theta_{0}\rho)(x)> 
=\langle u_{0} , (N {\rho(x)} - (2+2\alpha))\rangle = 0$ and as long as $d\geqslant 4$, the semiclassical form $u_{0}$ fulfills 
$$
	D(u_{0}) + N \,  (\theta_{0}\rho)(x) u_{0} =0 \ ,
$$
whose class is $\frac{d}{2}-2$. We notice that necessarily there is a set of coefficients, say $\{c_{\nu}: \nu=1,\ldots, \frac{d}{2}-2\}$, such that $ (\theta_{0}\rho)(x) = \sum_{\nu=1}^{d/2 -2} c_{\nu} B_{\nu}(x)$ with $Nc_{1} = {\gamma_{1}}^{-1}$ (inasmuch as $<u_{0},  N x (\theta_{0}\rho)(x)>= 1 $). 

\end{description}

Finally,  on account of \eqref{phi1phi2 psi1psi2} together with \eqref{Cond dOrtho 2}, the relation  \eqref{proof: diff rel for Bn} can be rewritten as in \eqref{diff rel for Bn in thm}. 
\end{proof}

\begin{remark}
 The only MOPSs whose $KL_{\alpha}$-transformed sequence is a $d$-MOPS are semiclassical sequences whose class ranges between $\frac{d}{2}-2$ and $\frac{d}{2}$, where $d$ must be an even number. The classical solutions to this problem (case where $s=0$) can only be the Hermite or Laguerre polynomial sequences (up to a linear change of variable). 
\end{remark}

In case a) of Theorem \ref{Thm: classification}, the class of the semiclassical form $u_{0}$ is $s=\frac{d}{2}$. We can actually observe that the form $ v = \lambda^{-1}xu_{0}$ (with $\lambda=\beta_{0}\neq0$) fulfills 
\begin{equation}\label{eq for v=xu}
	D(x v) + (N\rho(x)-(3+2\alpha)) v =0 \ . 
\end{equation}
The form $v$ is regular (ergo, semiclassical of class $\widehat{s}:=s(v)=\frac{d}{2}-1$) if and only if $B_{n}(0)\neq0$ for all $n\geqslant 1$ and then the corresponding MOPS $\{V_{n}\}_{n\geqslant0}$ is related to the MOPS $\{B_{n}\}_{n\geqslant0}$ through the finite-type relations 
\begin{eqnarray*}
	&& 	x V_{n+1}(x)= B_{n+1}(x) - \frac{B_{n+1}(0)}{B_{n}(0)} B_{n}(x) \ , \ n\geqslant 0 , \\
	&&  B_{n+1}(x) = V_{n+1}(x) -  \frac{B_{n}(0)}{B_{n+1}(0)} \gamma_{n+1} V_{n}(x) \ , \ n\geqslant 0. 
\end{eqnarray*}

This means that $u_{0}= \lambda x^{-1} v + \delta$, where $\delta$ represents the linear functional $\langle\delta,f\rangle =f(0)$ for any $f\in\mathcal{P}$. 

Conversely, if the form $v$ fulfilling \eqref{eq for v=xu} is regular (ergo, semiclassical), then the form $u_{0}= \lambda x^{-1} v + \delta$ is regular (and semiclassical) as long as $\lambda$ do not belong to a discrete set of singular values. Precisely, we recall:

\begin{proposition}\cite{MaroniPerHung 1990}\label{Prop Maroni Semiclassical} Let $w$ be a regular form, semiclassical of class $s$ fulfilling $D(\phi_{w} w) + \psi_{w} w =0$ and let $\{W_{n}\}_{n\geqslant0}$ be the corresponding MOPS. The form $u= \lambda x^{-1} w + \delta$ is regular and semiclassical of class $s+1$ if and only if $\lambda\neq\lambda_{n}$, $n\geqslant 0$, where 
$$ 
	\lambda_{0}=0 \ ; \ \lambda_{n+1} = - \frac{W_{n+1}(0)}{\langle w, \theta_{0} W_{n+1}(x)\rangle}
	\ , \ n\geqslant 0, 
$$ 
and such that $ \langle v, 
				\theta_{0}^{2}\phi_{w} + \theta_{0} \psi_{w}\rangle \lambda
				+ \phi_{w}'(0) + \psi_{w}(0) \neq0 $. 
Moreover, the recurrence coefficients corresponding to the MOPS $\{P_{n}\}_{n\geqslant 0}$ with respect to  $u$ are related to those of $\{W_{n}\}_{n\geqslant0}$ through 
$$
	\beta_{n}^{P}  = \beta_{n}^{W} + a_{n-1} - a_{n} 
	\qquad , \qquad 
	\gamma_{n}^{P} = - a_{n}(a_{n} - \beta_{n}^{W}) \ , \ n\geqslant 0,
$$
where $ a_{-1}=0$ and 
$$
	a_{n} = - \frac{W_{n+1}(0) + \lambda \langle w, \theta_{0} W_{n+1}(x)\rangle}
			{W_{n}(0) + \lambda \langle w, \theta_{0} W_{n}(x)\rangle}\ , \ n\geqslant 0, 
$$
and we have 
$$
	P_{n+1}(x) = W_{n+1}(x) + a_{n} W_{n}(x) \ , \ n\geqslant 0\ .
$$
\end{proposition}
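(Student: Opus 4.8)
The plan is to reduce the whole statement to the single algebraic identity the perturbation forces on the moments and then to build the perturbed orthogonal sequence explicitly. First I would record that $\langle x\delta,f\rangle=\langle\delta,xf\rangle=0$ and $\langle x(x^{-1}w),f\rangle=\langle w,\theta_0(xf)\rangle=\langle w,f\rangle$, so that $x(x^{-1}w)=w$ and hence $xu=\lambda w$. Read on moments this says $(u)_0=1$ and $(u)_k=\lambda(w)_{k-1}$ for $k\geq1$. This relation $xu=\lambda w$ is the engine of the argument: it presents $w$ as the Christoffel transform $\lambda^{-1}xu$ of $u$, so that $u$ is a Geronimus-type perturbation of $w$, and it is what allows one to pass back and forth between $\langle u,\cdot\rangle$ and $\langle w,\cdot\rangle$.

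For the regularity together with the explicit MOPS I would posit the connection formula $P_{n+1}=W_{n+1}+a_nW_n$; this is the only admissible shape, since $P_{n+1}-W_{n+1}$ has degree at most $n$ and the orthogonality of $\{P_n\}$ with respect to $u$, transported through $xu=\lambda w$, annihilates every $W_j$ with $1\leq j\leq n-1$. Imposing $\langle u,x^kP_{n+1}\rangle=0$ for $0\leq k\leq n$, I would note that for $k\geq1$ the condition is automatic, because $\langle u,x^kP_{n+1}\rangle=\lambda\langle w,x^{k-1}P_{n+1}\rangle=0$ by the orthogonality of $\{W_n\}$. Only $k=0$ remains, and $\langle u,P_{n+1}\rangle=\lambda\langle w,\theta_0P_{n+1}\rangle+P_{n+1}(0)=0$ solves to precisely the stated $a_n=-\big(W_{n+1}(0)+\lambda\langle w,\theta_0W_{n+1}\rangle\big)\big/\big(W_n(0)+\lambda\langle w,\theta_0W_n\rangle\big)$. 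Setting $D_n:=W_n(0)+\lambda\langle w,\theta_0W_n\rangle$ gives $a_n=-D_{n+1}/D_n$, and the vanishing of $D_n$ is exactly $\lambda=\lambda_n$. Regularity then follows from $\langle u,P_{n+1}^2\rangle=\langle u,x^{n+1}P_{n+1}\rangle=\lambda\,a_n\langle w,W_n^2\rangle$, which is nonzero for every $n$ if and only if $\lambda\neq0$ and all $D_n\neq0$, that is $\lambda\neq\lambda_n$ for all $n$; the converse implication is the contrapositive of this same computation, since when $\lambda=\lambda_m$ the relevant norm vanishes.

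For the recurrence coefficients I would substitute the three-term recurrences of $\{P_n\}$ and $\{W_n\}$ into $P_{n+1}=W_{n+1}+a_nW_n$ and collect in the basis $\{W_{n+1},W_n,W_{n-1}\}$. The $W_n$-row yields $\beta_n^P=\beta_n^W+a_{n-1}-a_n$, while the norm identity $\langle u,P_{n+1}^2\rangle=\lambda a_n\langle w,W_n^2\rangle$ of the preceding step gives $\gamma_{n+1}^P=(a_n/a_{n-1})\gamma_n^W$; feeding the explicit form $a_n=-D_{n+1}/D_n$ into the $W_{n-1}$-row then collapses this to the compact expression $\gamma_n^P=-a_n(a_n-\beta_n^W)$. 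The identity $P_{n+1}=W_{n+1}+a_nW_n$ is itself the last displayed formula of the statement.

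The harder and genuinely structural step is the semiclassical class. From $w=\lambda^{-1}xu$ and $D(\phi_w w)+\psi_w w=0$ I would substitute to obtain $D\big((x\phi_w)u\big)+(x\psi_w)u=0$, so that $u$ is semiclassical of class at most $\max(\deg\phi_w-1,\deg\psi_w)=s+1$. The main obstacle is to show the class equals $s+1$, i.e.\ that the pair $(x\phi_w,x\psi_w)$ admits no simplification at the root $c=0$ created by the extra factor $x$. For this I would evaluate the reduction functional of \eqref{cond Simplification semiclassical}: one finds $(x\phi_w)'(0)+(x\psi_w)(0)=\phi_w(0)$ and, using $\theta_0(x\phi_w)=\phi_w$, $\theta_0(x\psi_w)=\psi_w$ together with $\langle u,g\rangle=g(0)+\lambda\langle w,\theta_0 g\rangle$, that $\langle u,\theta_0^2(x\phi_w)+\theta_0(x\psi_w)\rangle=\lambda\langle w,\theta_0^2\phi_w+\theta_0\psi_w\rangle+\phi_w'(0)+\psi_w(0)$. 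The hypothesis of the statement is exactly that this quantity is nonzero, which blocks the reduction at $0$ and pins the class at $s+1$; at the remaining roots $c\neq0$ of $x\phi_w$ the irreducibility is inherited from the minimality of $(\phi_w,\psi_w)$ for $w$. Carrying the bookkeeping of degrees and reconciling it with the regularity constraint is where the care is needed, but no new idea beyond $xu=\lambda w$ is required.
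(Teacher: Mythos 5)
The paper does not prove this proposition at all --- it is quoted verbatim from Maroni's paper \cite{MaroniPerHung 1990} --- so there is no internal proof to compare against; your reconstruction must stand on its own, and it essentially does. The engine $xu=\lambda w$, the forced shape $P_{n+1}=W_{n+1}+a_nW_n$, the determination of $a_n$ from the single surviving condition $\langle u,P_{n+1}\rangle=0$, the norm identity $\langle u,P_{n+1}^2\rangle=\lambda a_n\langle w,W_n^2\rangle$ characterizing regularity, and the passage from $D(\phi_w w)+\psi_w w=0$ to $D\bigl((x\phi_w)u\bigr)+(x\psi_w)u=0$ with the non-simplification test at $c=0$ reducing exactly to the stated hypothesis: this is precisely Maroni's route, and each of these steps is carried out correctly. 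Two small points deserve attention. First, the one genuinely structural claim you leave unproved is that no simplification of the pair $(x\phi_w,x\psi_w)$ can occur at a root $c\neq 0$ of $\phi_w$; this does need an argument, but a short one: using $\theta_c(xf)=x\,\theta_c f+f(c)$ and $xu=\lambda w$ one finds that simplification at $c$ would force both $\phi_w'(c)+\psi_w(c)=0$ and $\lambda\langle w,\theta_c^2\phi_w+\theta_c\psi_w\rangle=0$, which (since $\lambda\neq0$) contradicts the minimality of $(\phi_w,\psi_w)$ for $w$. Second, your bookkeeping for $\gamma^P$ actually lands on $\gamma_{n+1}^P=(a_n/a_{n-1})\gamma_n^W=-a_n(a_n-\beta_n^W)$, where the last equality uses the three-term recursion $D_{n+1}=-\beta_n^W D_n-\gamma_n^W D_{n-1}$ satisfied by $D_n=W_n(0)+\lambda\langle w,\theta_0W_n\rangle$; the index placement in the statement as printed ($\gamma_n^P$ for $n\geqslant0$) is at odds with the paper's own convention for recurrence coefficients, so the discrepancy is in the quotation rather than in your derivation, but you should state the recursion for $D_n$ explicitly since the ``collapse'' you invoke does not follow from the $W_{n-1}$-row alone.
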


Consequently, the form $u_{0}:=u_{0}(\alpha)$, with $\alpha\neq-\frac{n+3}{2}$ is regular and semiclassical of class $s=d/2$ when the form $v$ is a semiclassical form of class $d/2-1$, providing that $\lambda\neq 0, - \frac{V_{n+1}(0)}{\langle v, \theta_{0} V_{n+1}(x)\rangle}	\ , \ n\geqslant 0 $.

In order to have a more precise idea of the possible cases that may arise, we shall focus on the case $d=2$. In \S\ref{subsect: Rev Appell}, we have already clarify that the $KL_{\alpha}$-transform of a Laguerre polynomial sequence is a $2$-orthogonal sequence. We will next show that, apart from this sequence, another one may arise: a semiclassical sequence that is a linear combination of two consequent elements of a Laguerre sequence.

\subsection{The case where $d=2$} \label{subsec2: case d=2}

In the light of Theorem \eqref{Thm: classification}, the $KL_{\alpha}$-transform of a MOPS $\{B_{n}\}_{n\geqslant0}$ is a 2-orthogonal sequence, then necessarily $\{B_{n}\}_{n\geqslant0}$ is a semiclassical sequence of class $1$ or $0$, as $u_{0}$ fulfills 
$$
	D(x^{2} u_{0}) + x(N (x+\rho(0)) - (3+2\alpha)) u_{0},
$$
where $\rho$ is a monic polynomial of degree 1. Depending on the structure of the polynomial $\rho$, two situations may arise.

{\bf Case a)} $\rho(0)=0$ and $\langle u_{0}, N x - (2+2\alpha) \rangle \neq 0$ with $\alpha \neqÊ- \frac{n+3}{2}$, $n\geqslant 0$. The semiclassical character of a linear functional remains invariant by shifting, therefore the form $\widetilde{u}_{0}=h_{N}u_{0}$ fulfills 
$$
	D(x^{2} \widetilde{u}_{0}) + x(x -(3+2\alpha))\widetilde{u}_{0}=0 \ .
$$
In this case, the form $v$ defined through $\lambda v=x \widetilde{u}_{0}$, i.e., $\widetilde{u}_{0} = \lambda \, x^{-1} v + \delta $ fulfills 
$$
	D(x v ) + (x - (3+2\alpha)) v =0\ . 
$$
Thus, $v$ is regular insofar as it is a Laguerre form of parameter $2\alpha+2$: $v=\mathfrak{L}(2\alpha+2)$. Therefore \cite[Proposition 4.2.]{MarTounsi2011}  $\widetilde{u}_{0}$ is a semiclassical form of class 1 if and only if $\lambda\notin\{0\ , \  \left(c_{n}(\alpha)\right)^{-1};\  n\geqslant 0\}$ where 
$$
	c_{n}(\alpha) =\left\{ \begin{array}{lcl}
		\frac{\Gamma (n+2 \alpha +4)-\Gamma (2 \alpha +3) \Gamma (n+2)}{(2 \alpha +2)
   \Gamma (n+2 \alpha +4)} &,& \alpha\neq-1\\
   		\sum\limits_{\nu=1}^{n+1} \frac{1}{\nu} &,& \alpha=-1, 
	\end{array}\right.\ , \ n\geqslant 0.
$$
The recurrence coefficients of the polynomial sequence $\{\widetilde{B}_{n}(x):=N^{n}B_{n}(N^{-1} x)\}_{n\geqslant 0}$ orthogonal with respect to $\widetilde{u}_{0}$ are then given by 
$$\begin{array}{l}
	\widetilde{\beta}_{0} = \lambda ; \quad 
	\widetilde{\beta}_{n+1} = 2n+2\alpha +5+ a_{n}-a_{n+1} \ , \  n\geqslant 0,  \\ 
	\widetilde{\gamma}_{n+1} = - a_{n}(a_{n}-2n-2\alpha-3) \ ,  \  n\geqslant 0,
\end{array}$$
where 
$$
	a_{n}=(n+2\alpha+3) \frac{1-\lambda c_{n}(\alpha)}{1-\lambda c_{n-1}(\alpha)} 
	 =\frac{
	 \lambda (n+1)! +(2 \alpha -\lambda +2)   (2 \alpha +3)_{n+1}}
   	{\lambda \, n! 	+(2 \alpha -\lambda +2) (2 \alpha +3)_{n}} \ , \ n\geqslant 0. 
$$
The MOPS  $\{\widetilde{B}_{n}\}_{n\geqslant 0}$ is related to the monic Laguerre polynomial sequence $\{\widehat{L}_{n}(\cdot;2\alpha+2)\}_{n\geqslant 0}$ through 
\begin{eqnarray*}
&&	\widetilde{B}_{n+1}(x) = \widehat{L}_{n+1}(x;2\alpha+2) + a_{n}  \widehat{L}_{n}(x;2\alpha+2)
	\ , \ n\geqslant 0,  \\
&&	x \widehat{L}_{n+1}(x;2\alpha+2) = \widetilde{B}_{n+1}(x) - (a_{n}-(2n-2\alpha-3))\widetilde{B}_{n}(x)
	\ , \ n\geqslant 0.  
\end{eqnarray*}

The integral representation for the form $\widetilde{u}_{0}=\lambda x^{-1}v+\delta$ can be deduced from the one of $v=\mathfrak{L}(2\alpha+2)$, for ${\rm Re}(\alpha)>-1$, which we recall 
$$
	\langle v,f\rangle = \frac{1}{\Gamma(2\alpha+3)} \int_{0}^{+\infty} x^{2\alpha+2} \e^{-x} f(x) dx 
	\ , \ \  \forall f\in\mathcal{P},
$$
and therefore 
$$
	\langle \widetilde{u}_{0},f\rangle = \frac{\lambda}{\Gamma(2\alpha+3)} 
		\int_{0}^{+\infty} x^{2\alpha+1} \e^{-x} f(x) dx + \left(1-\frac{\lambda}{2\alpha+2}\right)f(0)
			\ , \ \  \forall f\in\mathcal{P}.
$$

The 2-MOPS $\{\widetilde{S}_{n}\}_{n\geqslant 0}$ that is the $KL_{\alpha}$-transform of $\{\widetilde{B}_{n}\}_{n\geqslant 0}$ is given by 
$$
	\widetilde{S}_{n}(\tfrac{\tau^{2}}{4})
	= \widehat{S}^{L}_{n+1}(\tfrac{\tau^{2}}{4} ) + a_{n}  \widehat{S}^{L}_{n}(\tfrac{\tau^{2}}{4} )
	\ , \ n\geqslant 0, 
$$
where $\{\widehat{S}^{L}_{n}\}_{n\geqslant 0}$ represents the 2-MOPS that is the  $KL_{\alpha}$-transform  of the monic Laguerre polynomials of parameter $(2\alpha+2)$. Following the considerations made on \S\ref{subsect: Rev Appell}, we have 
$$
	 \widehat{S}^{L}_{n}(\tfrac{\tau^{2}}{4} )= KL_{\alpha}[\widehat{L}_{n}(x;2\alpha+2)](\tau)
	=  (-1)^{n}   (2\alpha+3)_{n} 
	\mathop{{}_{3}F_{1}}\left( {-n , \alpha+1-\frac{i\tau}{2},\alpha+1+\frac{i\tau}{2}\atop 2\alpha+3 } ; 1\right)
	\ , \ n\geqslant 0. 
$$

{\bf Case b) } When $\langle u_{0}, N \rho(x) - (2+2\alpha) \rangle = 0$ and $\deg\rho=1$, then 
$N\rho(x)=N \, B_{1}(x) + (2+2\alpha)$ and $u_{0}$ fulfills 
$$
	D(x u_{0}) + N B_{1}(x) u_{0} =0 \ .
$$
Looking at the moment equation, it follows 
$$
	N(u_{0})_{n+1} = (n+N\beta_{0}) (u_{0})_{n} \ , \ n\geqslant 0.
$$
In particular, when $n=1$, it follows 
$
	N\gamma_{1} = \beta_{0} \ .
$

Similarly to the precedent case, it is more conveninent to deal with the form $\widetilde{u}_{0}=h_{N} u_{0}$ (instead of $u_{0}$), which fulfills 
$$
	D(x \widetilde{u}_{0} ) + (x-\alpha_{1}-1) \widetilde{u}_{0}= 0
$$
after setting $\beta_{0}/N = \alpha_{1}+1$. Thus, $\widetilde{u}_{0}$ is a Laguerre form of parameter $\alpha_{1}$. Considerations regarding the $KL_{\alpha}$-transformed sequence were made in \S\ref{subsect: Rev Appell}.

%

\subsection{The cases where $d\geqslant 4$ and some final remarks} \label{subsec2: case d=4}

 In the light of Theorem \ref{Thm: classification}, a MOPS whose $KL_{\alpha}$-transformed sequence is $4$-orthogonal is necessarily a semiclassical sequence of class $s$ equal to $0,1$ or $ 2$. The classical polynomial sequence (case where $s=0$) leads to the Hermite polynomials, whereas the generalized Hermite polynomials arise as a particular example of the case b, where necessarily $s=1$. 
 The semiclassical sequences of class $2$ comes out within the line of Proposition \ref{Prop Maroni Semiclassical}. We defer the study of all these solutions to a future work. The analysis of the semiclassical sequences that arise for higher (even) values of the parameter $d$ is as well left to a later task, specially regarding the technical and hard computations involved.

Finally, the example of treated on  \S\ref{subsect: Rev Appell} of an orthogonal sequence that is the $KL_{\alpha}$-transformed of a $2$-orthogonal sequence, raises the question of seeking all the other $d$-orthogonal sequences that are mapped into orthogonal ones.

\bibliographystyle{amsplain}

\begin{thebibliography}{10}
\bibitem{Appell} P. Appell, Sur une classe de polyn\^{o}mes, Ann. Sci. de l'Ecole Norm. Sup. (2) 9 (1880) 119-144.
\bibitem{Atakishiyev} M.K. Atakishiyeva, N.M. Atakishiyev, On the Mellin transforms of hypergeometric polynomials. J. Phys. A 32 (1999), no. 3, L33-L41.
\bibitem{BCheikhDouak2000}  {  Y. Ben Cheikh and K. Douak}, On two-orthogonal polynomials related to the Bateman's $J^{u,v}_{n}$-fuction, Meth. Appl. Anal. 7 (2000), 641-662.
\bibitem{BCheikhDouak2001} Y. Ben Cheikh, K. Douak, A generalized hypergeometric d-orthogonal polynomials, C. R. Acad. Sci. Paris S\'er. I Math. 331 (2001) 349-354.
\bibitem{BCheikhDouak2001p2} Y. Ben Cheikh, K. Douak, On the classical $d$-orthogonal polynomials defined by certain generating functions, II, Bull Belg. Math. Soc. Simon Stevin, 8 (2001) 591-605.
\bibitem{CheikhChaggara} Y. Ben Cheikh, H. Chaggara, Connection coefficients between BoasÐBuck
polynomial sets, J. Math. Anal. Appl. 319 (2006) 665-689.

\bibitem{BenCheikhTabledOrtho} Y. Ben Cheikh, I. Lamiri, A. Ouni, On Askey-scheme and d-orthogonality,I. A characterization theorem., J. Comput. Appl. Math. 233 (2009), no. 3, 621-629.
\bibitem{ChiharaBook}T. S. Chihara,  \textit{An Introduction to Orthogonal Polynomials}, {Gordon and Breach}, New York,                1978.
\bibitem{NIST} Digital Library of Mathematical Functions. 2011-07-01. National Institute of Standards and Technology from \url{http://dlmf.nist.gov/24}. 
\bibitem{DouakAppell} K. Douak, The relation of the d-orthogonal polynomials to the Appell polynomials,  J. Comput. Appl. Math., 70 (1996) 279-295. 

\bibitem{Bateman}  {  A. Erd\'elyi,W. Magnus, F. Oberhettinger, and F.G. Tricomi}, Higher Transcendental Functions,Vols. I, II, III, McGraw-Hill, NewYork, London, Toronto, 1953.
\bibitem{Groenevelt Wilson} W. Groenevelt, Wilson function transforms related to Racah coefficients. Acta Appl. Math. 91 (2006), no. 2, 133-191.

\bibitem{IsmailKoelink} M. E. H. Ismail, E. Koelink, Spectral Analysis of Certain Schrodinger Operators, \href{http://arxiv.org/abs/1205.0821v1}{arXiv:1205.0821v1} [math.FA], 2012. 

\bibitem{IsmailBook} M. E. H. Ismail, Classical and quantum orthogonal polynomials in one variable. Encyclopedia of Mathematics and its Applications, 98. Cambridge University Press, Cambridge, 2009.


\bibitem{Koelink} H. T. Koelink, On Jacobi and continuous Hahn polynomials, Proc. Amer. Math. Soc. 124 (1996), no. 3, 887-898. 
\bibitem{KoekoekSwarrtow} R. Koekoek, R.F. Swarttouw, The Askey-scheme of hypergeometric orthogonal polynomials and its q-analogue. Delft University of Technology, Faculty of Information Technology and Systems, Department of Technical Mathematics and Informatics, Report no. 98-17, 1998.
\bibitem{Koornwinder85} T. H. Koornwinder, 
Special orthogonal polynomial systems mapped onto each other by the Fourier-Jacobi transform. Orthogonal polynomials and applications (Bar-le-Duc, 1984), 174-183, 
Lecture Notes in Math., 1171, Springer, Berlin, 1985. 
\bibitem{Koornwinder88} T. H. Koornwinder, Group theoretic interpretations of Askey's scheme of hypergeometric orthogonal polynomials. Orthogonal polynomials and their applications (Segovia, 1986), 46Ð72, Lecture Notes in Math., 1329, Springer, Berlin, 1988. 
\bibitem{Lebedev} N. N. Lebedev, Special functions and their applications, New York, Dover Publ., 1972
\bibitem{Loureiro2010}  {  A.F. Loureiro}, New results on the Bochner condition about classical orthogonal polynomials, J. Math. Anal. Appl. 364 (2010) 307-323. 
\bibitem{LouYak2012} {  A. F. Loureiro,  S. Yakubovich}, Kontorovich-Lebedev transform of polynomial sequences and the central factorial numbers,  Integral Transforms and Special Functions, to appear. DOI:10.1080/10652469.2012.672325
\bibitem{LouMarYak2011} {  A. F. Loureiro, P. Maroni, S. Yakubovich}, On a polynomial sequence associated with the Bessel operator,  Proc. Amer. Math. Soc., to appear. ( arXiv:1104.4055v1 )
\bibitem{maroniSemiclassiques88} P. Maroni, Le calcul des formes et les polyn\^omes orthogonaux semi-classiques. Orthogonal polynomials and Their Applications (Segovia, 1986, M. Alfaro et al Editors). Lecture Notes in Math., Vol. 1329, Springer- Verlag, Berlin, 1988, 279-290.
\bibitem{MaroniDortho} {  P. Maroni}, L'orthogonalit\'e et les r\'ecurrencesde polyn\^omes d'ordre sup\'erieur \`a deux. Ann. Fac. Sci. Toulouse 10 (1989), 1-36.
\bibitem{MaroniPerHung 1990} P. Maroni, Sur la suite de polyn\^omes orthogonaux associ\'ee \`a la forme $u=\delta_{c}+\lambda(x-c)^{-1}L$, Period. Math. Hungar. 21 (3) (1990), 223-248. 
\bibitem{MaroniTheorieAlg} {  P. Maroni}, Une th\'eorie alg\'ebrique des polyn\^{o}mes
orthogonaux. Application aux polyn\^{o}mes orthogonaux semi-classiques,
C. Brezinski et al. (Eds.), Orthogonal Polynomials and their Applications, in:
\textit{IMACS Ann. Comput. Appl. Math.} {9} (1991), 95-130. 
\bibitem{MaroniVariations} {  P. Maroni}, Variations around classical orthogonal polynomials. Connected problems. \textit{Journal of Comput. Appl. Math.}, {48} (1993), 133-155. 
\bibitem{MaroniFonctEuler} P. Maroni, Fonctions Eul\'eriennes. Polyn\^omes orthogonaux classiques. Techniques de l'Ing\'enieur, trait\'e G\'en\'eralit\'es (Sciences Fondamentales), A \textbf{154} (1994), 1-30. 

\bibitem{MarTounsi2011} P. Maroni, M. I. Tounsi, Quadratic decomposition of symmetric semi-classical polynomial sequences of even class: an example from the case $s=2$, J. Differ. Equ. Appl., to appear. 


\bibitem{PrudnikovMarichev}  {  A. P. Prudnikov, Yu. A. Brychkov and O.I. Marichev}, Integral Transforms, Vol. II: Special Functions, Gordon and Breach, New York, London, 1986. 

\bibitem{Riordan2}  {  J. Riordan}, {Combinatorial identities}, Wiley, New York, 1968.



\bibitem{Sneddon} I. N. Sneddon, The use of integral transforms, New York, McGraw-Hill, 1972. 
\bibitem{Toscano1956p2} L. Toscano, Polinomi ortogonali o reciproci di ortogonali nella classe di Appell. Le Mathematica 11 (1956) 168-174. 
\bibitem{AsscheYakubov2000}  {  W. Van Assche, S.B. Yakubovich}: Multiple orthogonal polynomials associated with Macdonald functions, Integral Transform. Spec. Funct. 9 (2000), 229–244.
\bibitem{Iseghem} J. Van Iseghem, Vector orthogonal relations. Vector QD-algorithm, J. of Comp. Appl. Math., 19 (1) (1987), 141-150. 
\bibitem{Yakubovich2004JAT} {  S.B. Yakubovich}, On the least values of Lp-norms for the Kontorovich-Lebedev transform and its convolution. Journal of Approximation Theory, 131, 231-242 (2004)
\bibitem{Yakubovich2003}  {  S. B. Yakubovich}, On the Kontorovich-Lebedev transformation, J. Integral Equations Appl. 15(1) (2003) pp. 95-112. 
\bibitem{Yakubov2002}  {  S. Yakubovich}, On the index integral transformation with Nicholson's function as the kernel, J. Math. Anal. Appl. 269 (2002), no. 2, 689-701.
\bibitem{Springer} S. Yakubovich, Encyclopedia of Mathematics. URL: \url{http://www.encyclopediaofmath.org/index.php?title=Kontorovich-Lebedev_transform&oldid=22663}
\bibitem{YakubovichBook1996}  {  S. B. Yakubovich}, Index Transforms, World Scientific Publishing Company, Singapore, New Jersey, London and Hong Kong, 1996.
\bibitem{YakubovichFisher1994}  {  S.B. Yakubovich, B. Fisher}, On the theory of the Kontorovich-Lebedev transformation on distributions, Proc. of the Amer. Math. Soc., 122 (1994), 773-777.
\bibitem{YakuLuchko} {  S. B.Yakubovich and  Yu. F.Luchko}, The
Hypergeometric Approach to Integral Transforms and Convolutions, Kluwer
Academic Publishers, Mathematics and Applications. Vol.287, 1994. 
\bibitem{Yak1985} S. B.  Yakubovich, A remark on the inversion formula for Wimp's integral transformation with respect to the index. (Russian) Differentsialnye Uravneniya 21 (1985), no. 6, 1097-1098. 
\end{thebibliography}

\end{document}